\newtheorem{theorem}{Theorem}
\newtheorem{conjecture}{Conjecture}
\newtheorem{corollary}[theorem]{Corollary}
\newtheorem{lemma}[theorem]{Lemma}
\newcounter{claims}
\newenvironment{claims}{\refstepcounter{claims}\par\medskip\noindent%
{{\bf (\theclaims)}~~}}{\par\medskip}
\newcommand{\ins}{\mbox{ins}}
\newcommand{\faces}{\mbox{mf}}
\newcommand{\bd}{\mbox{bd}}
\newcommand{\TT}{{\cal T}}
\newcommand{\DD}{{\cal D}}
\newcommand{\LL}{{\cal L}}
\newcommand{\SSS}{{\cal S}}
\title{A stronger structure theorem for excluded topological minors}
\author{Zden\v{e}k Dvo\v{r}\'ak\thanks{Computer Science Institute of Charles University, Prague, Czech Republic.
E-mail: {\tt rakdver@iuuk.mff.cuni.cz}.
The work leading to this invention has received funding from the European
Research Council under the European Union's Seventh Framework Programme
(FP7/2007-2013)/ERC grant agreement no. 259385.}}
\date{}
\begin{document}
\maketitle

\begin{abstract}
Grohe and Marx~\cite{gmarx} proved that if $G$ does not contain $H$ as a topological minor,
then there exist constants $g=O(|V(H)|^4)$, $D$ and $t$ depending only on $H$ such that $G$ is a clique sum of graphs that
either contain at most $t$ vertices of degree greater than $D$ or almost embed in some surface of genus at most $g$.
We strengthen this result, giving a more precise description of the latter kind of basic graphs of
the decomposition---we only allow graphs that (almost) embed in ways that are impossible for $H$
(similarly to the structure theorem for minors, where only graphs almost embedded in surfaces
in that $H$ does not embed are allowed).
This enables us to give structural results for graphs avoiding a fixed graph as an immersion
and for graphs with bounded $\infty$-admissibility.
\end{abstract}

In their fundamental result, Robertson and Seymour~\cite{robertson2003graph} gave a structural characterization
of graphs avoiding a fixed minor, showing that they can be decomposed along small cuts to
graphs that almost embed in some surface of bounded genus.
\begin{theorem}\label{thm-robsey}
For every graph $H$, there exists a constant $k$ such that if $G$ does not contain $H$ as a minor, then
$G$ is a clique-sum of graphs that $k$-almost-embed in surfaces in that $H$ cannot be embedded.
\end{theorem}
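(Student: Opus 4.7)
The plan is to derive this statement from the basic Graph Minor Structure Theorem in two stages. Since $H$ is a minor of $K_{|V(H)|}$, the hypothesis that $G$ excludes $H$ as a minor implies in particular that $G$ excludes $K_t$ for $t = |V(H)|$. Applying the structure theorem for $K_t$-minor-free graphs supplies constants $k_0, g_0$ depending only on $H$ and a clique-sum decomposition of $G$ into pieces $G_1,\ldots,G_m$, each $k_0$-almost-embedded into some surface $\Sigma_i$ of Euler genus at most $g_0$. The difficulty is that $H$ may still embed into some of the $\Sigma_i$; the remainder of the argument refines the decomposition so that it does not.

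The refinement is performed piece by piece. For each $G_i$ whose surface $\Sigma_i$ does admit an embedding of $H$, the aim is to establish a dichotomy: either (i) $G_i$ itself contains $H$ as a minor, contradicting the assumption, or (ii) $G_i$ can be split along a small clique separator into subpieces that almost-embed into strictly simpler surfaces, permitting induction. The dichotomy is governed by the representativity (face-width) of the embedded part of $G_i$ on $\Sigma_i$. When the representativity exceeds a threshold depending only on $H$, classical results on minors in surface-embedded graphs let one realize a fixed embedding $H \hookrightarrow \Sigma_i$ as an actual $H$-minor of the embedded portion of $G_i$, giving (i). When the representativity is small, a short noncontractible curve meeting the embedding in only boundedly many vertices yields a clique-sized separator along which $G_i$ can be cut into pieces, each almost-embedded into a surface of strictly smaller Euler genus or a disk with fewer handles/crosscaps, giving (ii).

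The main obstacle is the bookkeeping: ensuring that the iterated refinement terminates after a bounded number of steps and that the almost-embedding parameter of every piece stays uniformly bounded by some $k = k(H)$. The two features that complicate this are vortices and apex vertices. Short noncontractible curves may pass through vortices, so one must argue that an embedding of $H$ pushed through vortices of bounded depth still yields $H$ as an honest minor of $G_i$; and apex vertices are shared across pieces and must be tracked carefully through the clique-sums so that they do not accumulate beyond $k$. Controlling the representativity thresholds and propagating the bounds through vortices and apices is precisely where the full strength of Robertson--Seymour's machinery on graphs embedded in surfaces is required, and is the technically heaviest part of the proof.
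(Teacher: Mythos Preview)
The paper does not prove Theorem~\ref{thm-robsey}; it is quoted as the main structural result of Robertson and Seymour~\cite{robertson2003graph} and used as background for the rest of the paper. There is thus no in-paper proof to compare your proposal against.

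On the proposal itself, there is a genuine gap in case~(i) of your dichotomy. The pieces $G_i$ of a clique-sum decomposition are \emph{torsos}: each $G_i$ carries virtual clique edges on the adhesion sets with its neighbours in the decomposition tree, and those edges need not be present in $G$, nor are they in general realizable by disjoint connected subgraphs in the parts hanging off. A torso is therefore not a minor of $G$. So even if the embedded part of $G_i$ has large representativity on $\Sigma_i$ and $H$ embeds in $\Sigma_i$, exhibiting an $H$-minor inside $G_i$ does not contradict the hypothesis that $G$ is $H$-minor-free. What makes this step go through in Robertson and Seymour's actual argument is that one works not with a bare clique-sum but with a tangle and a $\TT$-central segregation, so that the surface graph $T(S)$ is genuinely a minor of $G$; that information is not recoverable from the statement of the $K_t$ structure theorem used as a black box. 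A secondary issue is that in case~(ii) a short noncontractible curve yields a small \emph{separator}, not a small \emph{clique}, so the resulting split is not itself a clique-sum; absorbing the cut vertices into the apex set works, but then one must also argue that the representativity threshold in case~(i) can be taken independent of the growing apex set, which your sketch does not address. Finally, note that the ``structure theorem for $K_t$-minor-free graphs'' you invoke is precisely Theorem~\ref{thm-robsey} with $H=K_t$, so at best you are reducing the general case to the complete-graph case rather than proving the theorem.
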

Here, a \emph{clique-sum} of graphs $G_1$ and $G_2$ is a graph obtained from them by identifying vertices of a clique in $G_1$
with vertices of a clique in $G_2$ of the same size, and possibly removing some edges.  A graph $G$ \emph{$k$-almost-embeds} in
a surface $\Sigma$ if a subgraph of $G$ obtained by removing at most $k$ vertices is an outgrowth of a graph embeddable in $\Sigma$
by at most $k$ vortices of depth at most $k$.  The definition of an outgrowth by vortices is somewhat technical and
we postpone it for later; at the moment, it is sufficient to know that each vortex is a restricted graph
attaching to a boundary of a single face of the embedding.

This result had a tremendous impact on structural graph theory and enabled development of many efficient
algorithms for graphs avoiding a fixed minor.  A corollary of the theory was an algorithm~\cite{rs13} to test
whether a graph $G$ on $n$ vertices contains a fixed graph $H$ as a minor in $O(n^3)$ time (only the multiplicative
constant hidden in the $O$-notation depends on $H$).

The methods of~\cite{rs13} also make it possible to test whether $H$ appears as a topological minor in $G$ (i.e., whether a subdivision
of $H$ is a subgraph of $G$) in $O(n^{|V(H)|+3})$.  For a long time, it was open whether topological minor containment of a fixed graph
$H$ can be tested in time $O(n^c)$ for some constant $c$ independent on $H$.  Finally, Grohe et al.~\cite{topmintest} gave a positive
answer to this question.  Further refining the ideas of this algorithm, Grohe and Marx~\cite{gmarx} were able to derive a structural
characterization of graphs avoiding a topological minor.
\begin{theorem}\label{thm-gmarx}
For every graph $H$, there exist constants $g=O(|V(H)|^4)$, $t$, $D$ and $k$ such that if $G$ does not contain $H$ as a topological
minor, then $G$ is a clique-sum of graphs that either contain at most $t$ vertices of degree greater than $D$,
or $k$-almost-embed in surfaces of genus at most $g$.
\end{theorem}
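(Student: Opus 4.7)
The plan is to derive Theorem~\ref{thm-gmarx} from Theorem~\ref{thm-robsey} by exploiting the fact that the gap between minor and topological minor containment is governed by vertex degrees. Write $h=|V(H)|$ and $\Delta=\Delta(H)$; a vertex of degree less than $\Delta$ can never serve as a branch vertex in a topological embedding of $H$. The first step would be to prove a bridge lemma: there exists $N=N(H)=O(h^2)$ such that any graph containing $K_N$ as a minor in which every branch set meets a vertex of degree at least $\Delta$ also contains $H$ as a topological minor. The idea is to promote the $K_N$-minor to a topological clique minor anchored at the high-degree vertices via a Menger-type routing argument through the branch sets, and then to realise $H$ as a topological subgraph of this enlarged structure. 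Consequently, if $G$ excludes $H$ as a topological minor, no $K_N$-minor of $G$ can have all its branch sets anchored at vertices of sufficiently high degree.

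Choose $D=D(H)$ large (depending on $N$ and $\Delta$) and form an auxiliary graph $G'$ by contracting or deleting the low-degree pieces (subgraphs induced by vertices of degree at most $D$, suitably partitioned), arranging that every $K_N$-minor of $G'$ pulls back to a $K_N$-minor of $G$ with branch sets anchored at high-degree vertices. By the bridge lemma such a minor would force $H$ as a topological minor of $G$, so $G'$ contains no $K_N$-minor. Applying Theorem~\ref{thm-robsey} to $G'$ yields a clique-sum decomposition of $G'$ into pieces that $k'$-almost-embed in surfaces of genus at most $g(K_N)=O(N^2)=O(h^4)$, matching the genus bound claimed in the theorem.

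The final step is to lift the decomposition of $G'$ back to a clique-sum decomposition of $G$ by reinserting the low-degree parts. Each resulting piece either contains at most $t$ vertices of degree exceeding $D$, landing it in the first class, or inherits the almost-embedding of the corresponding piece of $G'$, after absorbing a controlled amount of low-degree material into the vortices or apex set. The main obstacle is this lifting step: one must argue that the reattachment of the contracted low-degree pieces respects both the clique-cuts of the decomposition of $G'$ and the surface structure, without blowing up the parameters $t$, $D$, $k$ beyond quantities depending only on $H$. Handling this requires a careful local analysis at each separator, and a careful choice of the contraction procedure defining $G'$ so that the preimages of the clique-cuts are themselves clique-cuts of $G$ and so that each vortex in $G'$ can be enlarged to a vortex of bounded depth in $G$.
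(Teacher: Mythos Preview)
Your bridge lemma is in the right spirit, but the global contraction step (forming a single auxiliary graph $G'$ and showing it has no $K_N$-minor) cannot work. Consider any $3$-regular graph $G$ with a $K_N$-minor; such graphs exist for every $N$. If $H$ has maximum degree at least $4$, then $G$ trivially excludes $H$ as a topological minor, yet $G$ has no high-degree vertices at all. There is no way to ``contract or delete the low-degree pieces'' of $G$ to produce a $G'$ without a $K_N$-minor: every vertex is low-degree, so either $G'=G$ (and the $K_N$-minor survives) or $G'$ is degenerate. More generally, the low-degree part of a graph can by itself carry arbitrarily large clique minors, so one cannot arrange that every $K_N$-minor of $G'$ pulls back to one anchored at high-degree vertices of $G$. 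The obstacle you flag in the lifting step is real, but the approach already fails at Step~3.

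The paper does not prove Theorem~\ref{thm-gmarx} (it is quoted from Grohe and Marx), but it sketches their method in the proof outline. The crucial difference from your plan is that the dichotomy ``either a large clique minor is present, or apply Theorem~\ref{thm-robsey}'' is made \emph{locally}, relative to a tangle, not globally via a single auxiliary graph. One builds the tree decomposition recursively: at each stage one has a piece $G'$ with a small attachment set $X$; if $X$ is well-connected and a $K_n$-minor (with $n\approx |V(H)|^2$) is well-connected to $X$, then the free high-degree vertices can be routed through this minor to exhibit $H$ topologically, so almost all high-degree vertices are separated from $X$ by small cuts and the central torso has bounded degree. If no such $K_n$-minor is well-connected to $X$, one cuts along the separators between $X$ and all $K_n$-minors, and only the resulting \emph{central piece} excludes $K_{n'}$ as a minor for some $n'=O(n)$; Theorem~\ref{thm-robsey} is then applied to that piece alone, giving genus $O(n'^2)=O(|V(H)|^4)$. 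The tangle framework is what makes the bridge-lemma intuition rigorous: ``well-connected to the tangle'' replaces your ``anchored at a high-degree vertex,'' and this notion behaves correctly under the recursive decomposition.
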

They also gave examples using this characterization to design algorithms for graphs avoiding a fixed topological minor,
most importantly providing a polynomial-time isomorphism test for such graphs.

Comparing Theorems~\ref{thm-robsey} and \ref{thm-gmarx}, the latter places much weaker restriction on the almost embedded
pieces of $G$.  It would be desirable to replace ``surfaces of genus at most $g$'' by ``surfaces in that $H$ does not embed''.
Unfortunately, the resulting claim is false.  Consider the double-wheel $W_n$
with $n\ge 5$, consisting of a cycle $C$ of length $n$ and two vertices adjacent to $V(C)$.
The graph $W_n$ is planar, and thus there is no surface in that $W_n$ does not embed.  Should the proposed claim be true,
we would have that every graph that does not contain $W_n$ as a topological minor is a clique-sum of graphs
with at most $t$ vertices of degree greater than $D$, for some constants $t$ and $D$.
However, note that $W_n$ has a unique embedding in the sphere and the two vertices of degree
$n$ are not incident with a common face in this embedding.
Consequently, if $G$ is any graph embedded in the sphere such that all vertices of degree at least $n$ are incident with one face, then $G$ cannot contain $W_n$ as
a topological minor.  It is easy to see that the graphs embedded in the sphere with all vertices of degree at least $n$
incident with one face do not admit the proposed decomposition.

Therefore, the strengthening of Theorem~\ref{thm-gmarx} that we develop in this paper
needs to take properties of embeddings of $H$ in particular surfaces into account.
Let $\Sigma$ be a surface without boundary and let $H$ be a graph.
A set $S$ of faces in an embedding of $H$ is \emph{dominating $(\ge\!4)$-vertices} if
every vertex of $H$ of degree at least four is incident with a face in $S$.
If $H$ can be embedded in $\Sigma$, then let $\faces(H,\Sigma)$ be the minimum size of a set dominating $(\ge\!4)$-vertices,
taken over all embeddings of $H$ in $\Sigma$.  If $H$ cannot be embedded in $\Sigma$, then let $\faces(H,\Sigma)=\infty$.

In our version of Theorem~\ref{thm-gmarx}, we essentially
require that if a piece $G'$ of the decomposition almost embeds in a surface $\Sigma$ in that $H$ also embeds,
then all vertices of $G'$ of degree greater than $D$ are close to one of at most $\faces(H,\Sigma)-1$
faces of $G'$.  An equivalent and a slightly more convenient formulation is that all vertices of degree greater than $D$
of $G'$ are contained in one of at most $\faces(H,\Sigma)-1$ vortices.

However, this is still not quite right---even if $G'$ contains many high-degree vertices spread throughout it,
it may not contain $H$ as a topological minor if these high-degree vertices are separated from each other
(or from non-planar parts of $G'$, when $H$ is non-planar) by small separators.  These separators
cannot be eliminated through clique-sums, as there can be arbitrarily many of them and replacing them
by cliques would destroy the embedding of $G'$.  Therefore, we need one more (rather technical) adjustment for the
basic pieces of the decomposition, that allows for such separated vertices of high degree.  Let us now introduce
several definitions in order to formulate these ideas precisely.

A \emph{surface} is a compact connected $2$-manifold, possibly with a boundary.
In an embedding of a graph $G$ in a surface $\Sigma$ with boundary $\bd(\Sigma)$, we require that
$G$ intersects $\bd(\Sigma)$ only in vertices.  A \emph{face} of $G$ is an arc-connected component of
$\Sigma\setminus G$.  A simple curve in $\Sigma$ is \emph{$G$-normal} if it only intersects
$G$ in vertices.  Let $\Sigma(c,h,b)$ denote the surface obtained from the sphere with $b$ holes by adding $c$
crosscaps and $h$ handles.  By the classification theorem, every surface is homeomorphic to $\Sigma(c,h,b)$
for some $c,h,b\ge 0$.  If $\Sigma=\Sigma(c,h,b)$, then let $\hat{\Sigma}$ denote $\Sigma(c,h,0)$.
The \emph{genus} of $\Sigma(c,h,b)$ is $c+2h$.  The surface $\Sigma(0,0,1)$ is called a \emph{(closed) disk}.
The interior of $\Sigma(0,0,1)$ is an \emph{open disk}.

Let $G_1$ be a graph embedded in a surface $\Sigma_1$ and let $G_2$ be embedded in a disk $\Sigma_2$.
Let $u_1$, $u_2$, \ldots, $u_k$ be the vertices of $G_2\cap \bd(\Sigma_2)$ in the order
along the curve $\bd(\Sigma_2)$.  Let $f$ be a face of $G_1$ homeomorphic to an open disk $\Lambda$ bounded by a cycle $C=v_1v_2\ldots v_k$.
Let $G$ be the graph embedded in $\Sigma_1$ obtained from $G_1$ by adding a homeomorphic copy of $G_2$ to the closure of $\Lambda$
so that $u_i$ is mapped to $v_i$ for $1\le i\le k$.  We say that $G$ is obtained from $G_1$ by \emph{pasting} $G_2$.

Let $t$ be either a nonnegative integer or $\infty$ and let $D$, $n$, $m$ and $a$ be nonnegative integers.  A graph $G$ embedded in a disk $\Sigma$
is \emph{$(n,t,D,m,a)$-basic} if $|G\cap\bd(\Sigma)|\le n$ and there exist sets $S\subseteq V(G)$
and $A\subseteq V(G)$ with $|S|<t$ and $|A|\le a$ such that for every vertex $v\in V(G)\setminus A$ of degree greater than $D$,
there exist $s\in S$ and a $G$-normal
simple curve in $\Sigma$ joining $v$ with $s$ and intersecting $G$ in at most $m$ vertices.  That is, high degree vertices
(up to at most $a$ exceptions) of $G$ are contained in less than $t$ subgraphs near to the vertices of $S$.   Note that the condition
is vacuous when $t=\infty$.  A graph $G$ is an \emph{$(n,t,D,m,a)$-patch} if it is obtained from $(n,t,D,m,a)$-basic
graphs by a sequence of pastings.  Let us remark that $G$ is an $(n,\infty,D,m,a)$-patch iff it is embedded in a disk with at most $n$ vertices
of $G$ in its boundary.

Let $G=G_0\cup G_1\cup \ldots\cup G_k$, where $G_0$ is edge-disjoint with $G_i$ for $1\le i\le k$, and
$G_i$ is vertex-disjoint with $G_j$ for $1\le i<j\le k$.  Suppose that
$G_0$ is embedded in a surface $\Sigma=\Sigma(c,h,k)$, let $B_1$, \ldots, $B_k$ be the components of $\bd(\Sigma)$
and assume that $V(G_0)\cap V(G_i)=G_0\cap B_i$ for $1\le i\le k$.  
Let the vertices of $G_0\cap B_i$ in order along $B_i$ be $v^i_1$, \ldots, $v^i_{r_i}$.
Furthermore, suppose that for $1\le i\le k$, there exists a path decomposition $R^i_1$, \ldots, $R^i_{r_i}$ of $G_i$ of width at most $p$
such that $v^i_j$ belongs to $R^i_j$ for $1\le j\le r_i$.  In this situation, we say that $G$ is an \emph{outgrowth
of $G_0$ by $k$ vortices of depth at most $p$} and we call the graphs $G_1$, \ldots, $G_k$ \emph{vortices}.
A graph $G'$ is an \emph{$(n,t,D,m, a)$-expansion} of $G$ if $G'=G'_0\cup G_1\cup\ldots\cup G_k$ and $G'_0$ is a subgraph of
a graph obtained from $G_0$ by pasting $(n,t,D,m,a)$-patches to distinct faces of $G_0$.

Finally, we are ready to formulate our main result.

\begin{theorem}\label{thm-main}
For every $H$, there exist constants $n$, $D$, $m$, $k$, $p$ and $a$ with the following property.
Every graph $G$ that does not contain $H$ as a topological minor can be expressed as a clique-sum of
graphs $G_1$, $G_2$, \ldots, $G_l$ such that for $1\le i\le l$, there exists a set $A_i\subset V(G_i)$ of size at most $a$
and $G'_i=G_i-A_i$ satisfies one of the following conditions:
\begin{itemize}
\item the maximum degree of $G'_i$ is at most $D$, or
\item for some surface $\Sigma$ in that $H$ cannot be embedded,
$G'_i$ is an outgrowth of a graph embedded in $\Sigma$ by at most $k$ vortices of depth at most $p$, or
\item for some surface $\Sigma$ in that $H$ can be embedded, satisfying $\faces(H,\Sigma)\ge 2$,
there exists an outgrowth $G''_i$ of a graph embedded in $\Sigma$ by at most $k$ vortices of depth at most $p$ such that
all vertices $v\in V(G''_i)$ with $\deg_{G''_i}(v)>D$ belong to the vortices,
less than $\faces(H,\hat{\Sigma})$ of the vortices contain such a vertex,
and $G'_i$ is an $(n,\faces(H,\Sigma(0,0,0)),D,m,a)$-expansion of $G''_i$.
\end{itemize}
\end{theorem}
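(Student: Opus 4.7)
The plan is to use Theorem~\ref{thm-gmarx} as the starting point and refine each embedded piece of its decomposition. Pieces satisfying the bounded-degree condition are directly covered by the first bullet of Theorem~\ref{thm-main}; those whose host surface $\Sigma$ does not admit $H$ are covered by the second. The work is concentrated on a summand $G'_i$ that almost-embeds in a surface $\Sigma$ into which $H$ \emph{does} embed and yet contains unboundedly many high-degree vertices; for such a summand one must produce the outgrowth-plus-patch structure described by the third bullet.

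The central technical ingredient is a \emph{routing lemma} on surface-embedded graphs: suppose $G_0$ is embedded in $\Sigma$ and $G^*$ is an outgrowth of $G_0$ by vortices of bounded depth. If $G^*$ contains $\faces(H,\hat{\Sigma})$ vertices of degree above a threshold depending only on $H$, placed in pairwise topologically independent positions (any $G_0$-normal curve joining two of them meets $G_0$ in more than $m$ vertices), then $G^*$ contains a subdivision of $H$. The lemma is proved by fixing an embedding of $H$ in $\hat{\Sigma}$ that realizes $\faces(H,\hat{\Sigma})$, placing the $(\ge 4)$-degree branch vertices of $H$ at the chosen topologically isolated high-degree hubs of $G^*$, and completing the subdivision by routing the subdivided edges using the linkage theorems of Robertson--Seymour on bounded-genus surfaces; the high degrees supply the local connectivity that allows each hub to be joined to its prescribed neighbours. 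An analogous statement inside a disk $\Sigma(0,0,0)$, with threshold $\faces(H,\Sigma(0,0,0))$, yields the patch-side bound.

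I then apply the contrapositive iteratively. On the outer surface $\Sigma$, for each high-degree vertex of $G'_i$ outside its current vortices, either it is topologically close to an existing vortex (and can be swallowed into it by locally enlarging the vortex, maintaining the depth bound $p$), or $G'_i$ admits a short $G'_i$-normal separating curve whose enclosed disk contains that vertex; the routing lemma forbids accumulating $\faces(H,\hat{\Sigma})$ topologically independent high-degree clusters without producing $H$. The procedure terminates with fewer than $\faces(H,\hat{\Sigma})$ vortices carrying the high-degree vertices, giving the outgrowth $G''_i$. The disk regions cut off by the short separating curves are each pasted with a subgraph whose high-degree vertices are, by the sphere version of the routing lemma, clustered near fewer than $\faces(H,\Sigma(0,0,0))$ centres of a set $S$, and otherwise of bounded degree; this is precisely the definition of $(n,\faces(H,\Sigma(0,0,0)),D,m,a)$-basic. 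Nested separating curves give nested pastings, realising the recursive patch definition.

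The main obstacle is a careful proof of the routing lemma, since it is where the combinatorial parameter $\faces(H,\Sigma)$ is tied to actual subdivisions of $H$. The proof must select branch-vertex positions consistent with some fixed embedding of $H$ in $\Sigma$, and then execute the surface-linkage step in the presence of vortices and apex vertices. A secondary difficulty is propagating parameter bounds: each peeling step is driven by a small separating curve produced by a routing failure, and $n, D, m, k, p, a$ must remain bounded by a function of $H$ throughout the entire peeling recursion---this is handled by choosing the degree threshold $D$ and the linkage parameter $m$ sufficiently large at the outset, as functions of $|V(H)|$ and the genus bound supplied by Theorem~\ref{thm-gmarx}.
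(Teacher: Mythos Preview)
Your overall intuition about a routing lemma tied to $\faces(H,\Sigma)$ is right and close to the paper's Lemma~\ref{lemma-main}. However, the top-level architecture has a genuine gap: you cannot take Theorem~\ref{thm-gmarx} as a black box and post-process its pieces. The summands $G_i$ of a clique-sum decomposition are \emph{torsos}: they carry phantom clique edges at the interfaces with the other summands. There is no reason a torso should avoid $H$ as a topological minor just because $G$ does, so your routing lemma---whose contrapositive is what drives the whole argument---cannot legitimately be applied to $G'_i$. Moreover, ``high degree in the torso'' is not the right notion: a vertex may acquire large degree purely from the added clique edges, and conversely a genuinely well-connected vertex of $G$ may look tame inside a single piece.

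The paper avoids both problems by not treating Theorem~\ref{thm-gmarx} as a black box. It re-enters the Grohe--Marx tangle-driven proof and replaces their local lemma by Lemma~\ref{lemma-local}, which is applied to $G$ itself relative to a tangle $\TT$, before any clique edges are added. The routing argument (Lemmas~\ref{lemma-main} and~\ref{lemma-mainsim}) is then run not on ``high-degree vertices'' but on vertices that are $n'$-\emph{free} with respect to $\TT$, and ``topologically independent'' is made precise via the metric of a respectful tangle in $T(S)$ supplied by the strengthened structure theorem (Theorem~\ref{thm-gm}). This tangle information is exactly what your black-box use of Theorem~\ref{thm-gmarx} discards. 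The cutting-off of non-free vertices is handled by Lemma~\ref{lemma-cutbig}, and the disk pieces are processed by the recursive Lemma~\ref{lemma-patches}; only after all of this is the $\TT$-respecting star decomposition assembled and the clique edges introduced. If you want to salvage your outline, you would need to work at the tangle level from the start, at which point you are essentially reproducing the paper's proof.
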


The proof of this theorem is presented in Section~\ref{sec-struct}.
Let us remark that if the genus of a surface $\Sigma$ is at least $2|E(H)|$, then $\faces(H,\Sigma)\le 1$.
Furthermore, if at most one vertex of $H$ has degree at least four and $H$ can be embedded in $\Sigma$, then $\faces(H,\Sigma)\le 1$.
Therefore, the last case of the theorem applies only if $H$ has at least two vertices of degree at least four,
and the genus of all surfaces in the decomposition is less than $2|E(H)|$, improving on the bound $O(|V(H)|^4)$ of Grohe and Marx~\cite{gmarx}.

\medskip

Let us now discuss several consequences of Theorem~\ref{thm-main}.
\emph{Immersion} is a notion related to a topological minor: we say that $G$ \emph{immerses} $H$ if there
exists a mapping of vertices of $H$ to distinct vertices of $G$ (\emph{the branch vertices}) and of edges of $H$ to mutually edge-disjoint paths in $G$
joining the corresponding branch vertices.  The immersion is \emph{strong} if the internal vertices of the paths are distinct from the branch vertices.
Let $M_t$ be the graph consisting of vertices $w$, $w_i$, and $z_{ij}$ and edges $wz_{ij}$ and $w_iz_{ij}$ for $1\le i\le t$ and $1\le j\le t$.
Note that every simple graph with at most $t$ vertices is strongly immersed in $M_t$.
This implies that if $G$ contains $M_t$ as a topological minor, then every simple graph with at most $t$ vertices is strongly immersed in $G$. 
Furthermore, $M_t$ is planar and $\faces(M_t,\Sigma)=1$ for every surface $\Sigma$.
Consequently, we obtain the following structural result for graphs avoiding a fixed immersion.

\begin{corollary}\label{cor-imm}
For every graph $H$, there exist constants $D$ and $a$ with the following property.
Every graph $G$ that does not contain $H$ as a strong immersion can be expressed as a clique-sum of
graphs $G_1$, $G_2$, \ldots, $G_n$ such that for $1\le i\le n$, all but at most $a$ vertices of $G_i$ have degree at most $D$.
\end{corollary}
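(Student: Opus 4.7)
The plan is to apply Theorem~\ref{thm-main} to $G$ with $M_t$ (where $t=|V(H)|$) as the forbidden topological minor, and to argue that only the first of the three alternatives offered by Theorem~\ref{thm-main} can arise. The first step is the reduction: if $G$ does not strongly immerse $H$, then $G$ does not contain $M_t$ as a topological minor. A subdivision of $M_t$ in $G$ is itself a strong immersion of $M_t$ in $G$ (the subdivision vertices have degree $2$ and are therefore distinct from the branch vertices), and composing with the strong immersion of $H$ in $M_t$ noted in the excerpt yields a strong immersion of $H$ in $G$. The composition is carried out by substituting, for each edge of an ``outer'' path, the ``inner'' path corresponding to that edge; edge-disjointness is inherited, and the internal-avoids-branch condition of the outer immersion combined with injectivity of the inner map preserves the strong property. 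The desired reduction then follows by contrapositive.

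Next I would apply Theorem~\ref{thm-main} and rule out the second and third cases of the decomposition. The second case requires a surface $\Sigma$ in which $M_t$ does not embed; since $M_t$ is planar, no such surface exists. The third case requires $\faces(M_t,\Sigma)\ge 2$, but the excerpt asserts $\faces(M_t,\Sigma)=1$ for every surface $\Sigma$, witnessed by the ``flower'' embedding that draws $M_t$ as $t$ copies of $K_{2,t}$ sharing the central vertex $w$, arranged so that $w$ and all the $w_i$ lie on a common face (the $z_{ij}$ have degree $2$ and so need not be dominated), and then transferred to any surface by embedding this planar drawing in a small open disk. Consequently every piece $G'_i=G_i-A_i$ falls under the first case, with maximum degree at most $D$ in $G'_i$; since each such vertex has at most $|A_i|\le a$ additional neighbors in $A_i$, its degree in $G_i$ is at most $D+a$, so replacing the $D$ in the statement of the corollary by $D+a$ yields the claimed bound.

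The main obstacle is the setup rather than any technical estimate: one must recognize that $M_t$ is the right auxiliary graph because it simultaneously accommodates every $t$-vertex graph as a strong immersion (so forbidding $M_t$ as a topological minor is strong enough to forbid $H$ as an immersion) and is planar with $\faces(M_t,\Sigma)=1$ on every surface (so it renders the second and third alternatives of Theorem~\ref{thm-main} vacuous). Once this observation is made, the corollary is a direct consequence of Theorem~\ref{thm-main} together with the properties of $M_t$ already asserted in the excerpt.
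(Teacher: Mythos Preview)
Your proposal is correct and follows essentially the same approach as the paper: the corollary is stated there as an immediate consequence of Theorem~\ref{thm-main} together with the observations (made in the text just before the corollary) that $M_t$ is planar, that $\faces(M_t,\Sigma)=1$ for every surface $\Sigma$, and that any graph containing $M_t$ as a topological minor strongly immerses every $t$-vertex graph. Your write-up is in fact more detailed than the paper's, spelling out the composition of strong immersions, the witnessing embedding for $\faces(M_t,\Sigma)=1$, and the adjustment of $D$ to $D+a$ to pass from degrees in $G_i-A_i$ to degrees in $G_i$.
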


However, Corollary~\ref{cor-imm} is not the best possible characterization for graph avoiding an
immersion---in that context, joins over small edge cuts are the natural combination operation,
rather than clique-sums.  Such a characterization with a tree-like decomposition over edge cuts
(and also with significantly better constants) was obtained by Seymour and Wollan~\cite{seywol} and independently
by DeVos et al.~\cite{mattpriv}.

This motivates a question whether there exists a natural graph parameter corresponding to the decomposition by clique sums into pieces
of almost bounded maximum degree, as in Corollary~\ref{cor-imm}.  This parameter turns out to be \emph{$\infty$-admissibility}
(also referred to as \emph{infinite degeneracy} by Richerby and Thilikos~\cite{thilinfdeg}) defined as follows.

Given an ordering $v_1$, $v_2$, \ldots, $v_n$ of vertices of a graph $G$, the \emph{$d$-backconnectivity} of $v_k$
is the maximum number of paths of length at most $d$ from $v_k$ to $\{v_1, \ldots, v_{k-1}\}$ that intersect one another only in $v_k$.
The \emph{$d$-admissibility} of the ordering is the maximum of the $d$-backconnectivities of the vertices of $G$.
The $d$-admissibility of $G$ is the minimum $d$-admissibility over all orderings of $V(G)$.

The $1$-admissibility is also known as the \emph{degeneracy} of a graph, and the $2$-admissibility is a parameter
(generally referred to just as the \emph{admissibility}) related to the arrangeability of a graph, see e.g.~\cite{arr2,arr3,arr1}.
The $d$-admissibility is also related to generalized coloring numbers~\cite{kierstead2003orderings,zhu2009colouring}.
For us, the case that $d=\infty$ (i.e., we do not put any restrictions on the lengths of the paths) is relevant.

\begin{corollary}\label{cor-adm}
For every $t$, there exist constants $D$ and $a$ with the following property.
Every graph $G$ with $\infty$-admissibility at most $t$ can be expressed as a clique-sum of
graphs $G_1$, $G_2$, \ldots, $G_n$ such that for $1\le i\le n$, all but at most $a$ vertices of $G_i$ have degree at most $D$.
\end{corollary}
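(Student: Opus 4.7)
The plan is to derive Corollary~\ref{cor-adm} from Theorem~\ref{thm-main} by choosing a complete graph of appropriate size as the forbidden topological minor. The key first step is to verify that $\infty$-admissibility is monotone under topological minors: if $H$ is a topological minor of $G$, then the $\infty$-admissibility of $H$ is at most that of $G$. Given an ordering of $V(G)$ that realises its $\infty$-admissibility, restricting to the branch vertices of the topological embedding of $H$ yields an induced ordering of $V(H)$; for each branch vertex $b$ of $H$, any family of internally disjoint back-paths of $b$ in $H$ lifts to an internally disjoint family in $G$ by concatenating the subdivision paths prescribed by the embedding, with internal disjointness preserved because distinct subdivision paths in $G$ are themselves pairwise internally disjoint.

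Since the $\infty$-admissibility of $K_n$ equals $n-1$ (attained by placing the last vertex after all others, which has $n-1$ direct length-one back-paths), this monotonicity forces every graph $G$ with $\infty$-admissibility at most $t$ to exclude $K_{t+2}$ as a topological minor. Applying Theorem~\ref{thm-main} with $H=K_{t+2}$ gives a clique-sum decomposition $G=G_1\oplus\cdots\oplus G_l$ with exceptional sets $A_i$ of size at most $a$, each $G'_i=G_i-A_i$ falling into one of the three listed cases. Pieces in the first case already satisfy the desired conclusion. For the remaining cases, the remark following Theorem~\ref{thm-main} together with the standard Euler-genus bound for $K_{t+2}$ forces the embedding surface to have genus bounded purely in terms of $t$; the number of vortices and their depth are bounded by constants depending only on $t$; and in the third case the high-degree vertices of the piece are confined to fewer than $\faces(K_{t+2},\hat{\Sigma})$ vortices and to expansion patches near fewer than $\faces(K_{t+2},\Sigma(0,0,0))$ designated vertices.

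To finish, each such piece is further clique-sum-decomposed by exploiting that the piece inherits $\infty$-admissibility bounded in terms of $t$ from $G$ (up to a bounded additive correction coming from the clique torsos). The admissibility ordering then provides, via Menger's theorem, a bounded-size vertex cut separating each remaining high-degree vertex of the piece from the rest; combining these cuts with the already bounded-size boundary cycles of the vortices and expansion patches yields a refined clique-sum decomposition whose sub-pieces each contain only a bounded number of high-degree vertices, absorbed into an enlarged $A_i$, and whose remaining vertices all have degree at most $D$.

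The main obstacle will be to execute this final refinement cleanly: one must coordinate the $\infty$-admissibility ordering of $G$ with the embedded structure supplied by Theorem~\ref{thm-main} so that each cut at an admissibility separator is a legitimate clique-sum operation compatible with the vortex, patch, and expansion structure, and so that the process terminates after a bounded number of iterations, keeping the final values of $D$ and $a$ dependent only on $t$.
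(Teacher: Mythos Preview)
Your first step---showing that bounded $\infty$-admissibility excludes a fixed topological minor---is correct, but your choice $H=K_{t+2}$ makes the rest of the argument much harder than necessary, and the ``refinement'' you sketch at the end does not go through.

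The paper instead forbids the \emph{planar} graph $W_{t+2}$ (a large wall with $t+2$ vertices of degree $t+2$ attached along the perimeter), which satisfies $\faces(W_{t+2},\Sigma)=1$ for every surface $\Sigma$. With this choice, Theorem~\ref{thm-main} yields \emph{only} the first outcome: since $W_{t+2}$ embeds in every surface, the second case is vacuous, and since the third case requires $\faces(H,\Sigma)\ge 2$, it is vacuous too. The corollary then follows in one line.

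With $H=K_{t+2}$ the second case of Theorem~\ref{thm-main} genuinely occurs (there are surfaces in which $K_{t+2}$ does not embed), and that case places no restriction whatsoever on high-degree vertices---an almost-embedded piece can have arbitrarily many of them. Your proposed fix, that each torso ``inherits $\infty$-admissibility bounded in terms of $t$ from $G$ up to a bounded additive correction,'' is unjustified: the clique edges added when forming torsos can in principle raise $\infty$-admissibility without any bound tied to the original parameter. Even granting that, the Menger separators you invoke are not cliques, so cutting along them is not a clique-sum operation, and turning them into cliques reintroduces the same difficulty. Finally, there is no argument that the iteration terminates in boundedly many steps. In effect you would be re-deriving the degree-bounded case of the structure theorem from scratch, which is precisely what the careful choice of $H$ is meant to avoid.
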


Furthermore, a weak converse holds, showing that the $\infty$-admissibility is indeed the right parameter
described by such a decomposition.
\begin{theorem}\label{thm-converse}
For every $D$ and $a$, there exists $T$ such that if $G$ is a clique-sum of
graphs with at most $a$ vertices of degree greater than $D$, then $G$ has $\infty$-admissibility at most $T$.
\end{theorem}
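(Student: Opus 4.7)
The plan is to construct an explicit ordering of $V(G)$ by traversing the clique-sum decomposition tree from the root downward, and to bound the $\infty$-backconnectivity of every vertex by exhibiting a small vertex separator and applying Menger's theorem.

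Let $G = G_1 \oplus \cdots \oplus G_n$ with each $G_i$ having at most $a$ vertices of degree greater than $D$, and let $\TT$ be the standard tree on $\{G_1, \ldots, G_n\}$ whose edges correspond to the individual clique-sum operations, so that for every $v \in V(G)$ the bags containing $v$ form a connected subtree of $\TT$. Set $c = \max(D+1, a)$. Every shared clique $C$ of the decomposition satisfies $|C| \le c$: otherwise $|C| - 1 > D$ would force every vertex of $C$ to have degree greater than $D$ in its bag, whence $|C| \le a$.

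I would next root $\TT$ at an arbitrary bag $G_r$, denote by $C_i$ the clique shared between $G_i$ and its parent (with $C_r = \emptyset$), and let $\mathrm{new}(G_i) = V(G_i) \setminus C_i$; these sets partition $V(G)$. Order $V(G)$ by traversing $\TT$ in depth-first preorder, listing the vertices of each $\mathrm{new}(G_i)$ consecutively, and within this block placing the (at most $a$) vertices of $A_i := \{u \in V(G_i) : \deg_{G_i}(u) > D\}$ before the low-degree ones. Fix $v \in \mathrm{new}(G_i)$ and let $E$ be the set of vertices preceding $v$ in this ordering. If $v \in A_i$ and is the $j$-th high-degree vertex of $\mathrm{new}(G_i)$, take the separator $X = C_i \cup \{u_1, \dots, u_{j-1}\}$ with $u_1, \dots, u_{j-1}$ the earlier vertices of $A_i \cap \mathrm{new}(G_i)$; if $v \notin A_i$, take $X = N_{G_i}(v) \cup C_i$. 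In both cases $v \notin X$ and $|X| \le c + \max(D, a-1)$, so setting $T := c + \max(D, a)$ and invoking Menger's theorem gives the desired bound, provided $X$ separates $v$ from $E \setminus X$ in $G - X$.

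This last verification is the main obstacle. Because preorder processes a bag before any of its descendants, no vertex of a proper descendant of $G_i$ precedes $v$, so every earlier vertex that lies inside the subtree of $\TT$ rooted at $G_i$ must belong to $V(G_i)$, hence to $C_i \cup (\mathrm{new}(G_i) \cap E)$. In the high-degree case this set is precisely $X$, so $E \setminus X$ lies entirely outside the subtree and every $v$--$(E \setminus X)$ path is blocked at $C_i \subseteq X$. In the low-degree case $E \setminus X$ may also contain earlier vertices of $\mathrm{new}(G_i)$ that are not neighbours of $v$ in $G_i$; any $v$-path to such a vertex cannot use an edge of $G_i$ at $v$ (all such edges land in $N_{G_i}(v) \subseteq X$), and so would have to detour through a child subtree $G_{i_k}$ with $v \in C_{i_k}$. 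But $C_{i_k}$ is a clique in $G_i$, so $C_{i_k} \setminus \{v\} \subseteq N_{G_i}(v) \subseteq X$; once $X$ is removed only $v$ itself remains in $C_{i_k}$, so no such detour escapes the subtree of $G_{i_k}$, nor can it meet an earlier vertex of $\mathrm{new}(G_i)$ inside that subtree (such a vertex would itself lie in $C_{i_k} \setminus \{v\} \subseteq X$).
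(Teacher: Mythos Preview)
Your argument is correct. Both cases of the separator check go through: in the high-degree case, $E\cap V(S_i)\subseteq C_i\cup\{u_1,\dots,u_{j-1}\}=X$ and $C_i$ blocks everything outside the subtree $S_i$; in the low-degree case, the component of $v$ in $G-X$ is contained in $\{v\}\cup\bigcup_{k:\,v\in C_{i_k}}(V(S_{i_k})\setminus C_{i_k})$, and all of these vertices are listed after $v$ by preorder. One small slip of phrasing: when you say ``placing the (at most $a$) vertices of $A_i$'', you mean $A_i\cap\mathrm{new}(G_i)$, since $A_i$ may contain vertices of $C_i$ that were already placed in an ancestor's block. You should also note (as the paper does) that it suffices to bound the $\infty$-admissibility of the supergraph $G_1\cup\dots\cup G_n$ in which no clique edges have been deleted, since $\infty$-admissibility is monotone under taking subgraphs; your separator arguments implicitly use that the $C_{i_k}$ are still cliques in $G_i$.

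Your route is genuinely different from the paper's. The paper isolates a strengthened invariant, \emph{clique-based $\infty$-admissibility} (the property of having $X$-based $\infty$-admissibility at most $t$ for every clique $X$), shows that each piece has it with $t=a+D$ (Lemma~\ref{lemma-baseadm}), and proves it is closed under gluing along a clique (Lemma~\ref{lemma-treeadm}); the theorem then follows by induction with $T=a+D$. Your approach instead builds one global ordering by a DFS preorder and bounds each backconnectivity by an explicit separator and Menger's theorem. The paper's method is more modular and yields the tighter constant $T=a+D$, whereas your $T=\max(D+1,a)+\max(D,a)$ can be roughly twice as large; on the other hand, your argument is entirely self-contained and produces the witnessing ordering explicitly, without the auxiliary ``clique-based'' notion.
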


The proofs of Corollary~\ref{cor-adm} and Theorem~\ref{thm-converse} are given in Section~\ref{sec-appl}.

\section{Proof outline}

The proof of Theorem~\ref{thm-main} uses advanced concepts and ideas from the
graph minor structure theory of Robertson and Seymour, as well as the proof of Grohe and Marx~\cite{gmarx}, and it is not possible
to adequately explain this theory (derived in a series of more than 20 long papers) on just a few pages.  Consequently, the sections~\ref{sec-def}--\ref{sec-struct}
are aimed at a reader familiar with the theory, and a casual reader may find them impossible to understand.
In particular, we use standard definitions from the area without
an explanation, although we provide references to the papers from which they originate.
For this reason, let us provide an outline explaining the basic ideas of the proof in a more accessible way.

The proof of Grohe and Marx~\cite{gmarx} essentially works as follows.  Consider a graph $G$
that does not contain $H$ as a topological minor.  We gradually find a decomposition
of $G$ into a tree whose nodes correspond to the pieces of $G$ belonging to one of the cases
described in the statement of Theorem~\ref{thm-gmarx}.  We first choose a root piece of the
tree, split the rest of the graph to connected components attaching to this piece, and proceed
with each such component $G'$ recursively.

At this point $G'$ has a specified set of vertices $X$ of bounded size,
representing its intersection with the rest of the graph.  If there is a small separator $S$
in $G'$ splitting it to $G'_1$ and $G'_2$, such that the intersections of $G'_1$ and $G'_2$
with $X$ have roughly equal size, we introduce a new node corresponding to the subgraph
of $G'$ induced by $X\cup S$, and process $G'_1$ and $G'_2$ recursively.

Otherwise, $X$ is well-connected in $G'$.  
If $G'$ contains a minor of $K_n$ that is well-connected to $X$ in a similar sense (where $n\approx |V(H)|^2$ is a properly chosen constant), then since $G'$ does not contain a topological
minor of $H$, one can argue that almost all vertices of high degree in $G'$ must be split off
from $X$ by small separators (otherwise, we could link the high-degree vertices through
the minor of $K_n$ in order to obtain a topological minor of $H$).  We cut the graph on these small separators and replace them by
cliques, and argue that the piece containing $X$ contains only constantly many vertices of high degree.
We then process the cut-off parts recursively.

Finally, if no minor of $K_n$ is well-connected to $X$, one can apply a similar idea to the small
separators of $G'$ separating $X$ from the minors of $K_n$ in $G'$, and obtain a decomposition
with the central piece avoiding the minor of $K_{n'}$ for some $n'=O(n)$. This central piece can be further
split by Theorem~\ref{thm-robsey} to obtain a decomposition into pieces almost embedded
in surfaces of genus $O(n^2)$.

We proceed on from there, investigating when a topological minor of $H$ appears in an almost embedded graph $G'$.
We use a well-known fact that the pieces of the Robertson-Seymour decomposition
either have a bounded number of vertices, or are ``generic'' in the sense that the embedding has high
representativity and the vortices are mutually far apart.  In the latter case, Robertson and Seymour~\cite{rs7}
gave sufficient conditions for existence of a topological minor $H$ with prescribed branch vertices.
Essentially, if an embedding of $H$ is topologically possible and the branch vertices
are sufficiently connected and sufficiently far apart in $G'$, then $H$ appears as a topological minor in $G'$.

We use this claim to derive that if such a generic almost embedding in $\Sigma$ does not contain $H$ as a topological
minor, but $H$ can be embedded in $\Sigma$,
then all but constantly many vertices of high degree are either cut off by small separators,
or near to one of less than $\faces(H,\Sigma)$ faces of the embedding.  If $H$ is planar, we further
apply the same claim recursively to the parts split off by small separators, eventually concluding
that $G'$ has the structure described in the last case of Theorem~\ref{thm-main}.

Let us remark that if $\faces(H,\Sigma)=1$, then the conditions of the last case imply that the piece
is obtained by patching a graph $G''$ with bounded maximum degree.  In this case, we can process the
patches containing high-degree vertices recursively and attach them to $G''$ through clique-sums,
ending up with a central piece with the structure described in the first case of Theorem~\ref{thm-main},
instead.  Thus, the last case of Theorem~\ref{thm-main} applies only if $\faces(H,\Sigma)\ge 2$.

In the following section, we introduce the mentioned auxiliary results from graph minor theory more precisely.
The sufficient conditions for existence of a topological minor are derived Section~\ref{sec-top}.
Using them, the rest of the proof (presented in Section~\ref{sec-struct}) is straightforward, although somewhat technical.

\section{Notation and auxiliary results}\label{sec-def}

All graphs that we consider are simple (this does not affect the generality of our results, since
topological minor containment for multigraphs can be equivalently transformed to simple graphs by subdividing edges).
For definitions of a \emph{(respectful) tangle (controlling a minor)}, the metric derived from a respectful tangle, the function (\emph{slope}) $\ins$
defined by a respectful tangle, $p$-vortex,
\emph{($\TT$-central) segregation (of type $(p,k)$)}, an \emph{arrangement} of a segregation, see
Robertson and Seymour~\cite{rs12, robertson2003graph}.

We mostly use the notation of Grohe and Marx~\cite{gmarx}, in particular, we use $N^G[X]$ for the closed neighborhood
of a set $X$ in $G$, $N^G(X)=N^G[X]\setminus X$, and $S^G(X)$ for the separation $(A, B)$ of $G$ such that $A$ is the subgraph of $G$ induced by $N^G[X]$
and $V(A\cap B)=N^G(X)$.
The separation $(A,B)$ \emph{removes} $X$ if $X\subseteq V(A)\setminus V(B)$.  When a graph $G$ and a tangle $\TT$ are fixed
and there exists a separation in $\TT$ that removes $X$, we let $W(X)$ denote the set $W$ such that $X\subseteq W$,
$S^G(W)\in \TT$, the order of $S^G(W)$ is minimum and subject to that, $|W|$ is minimum (such a set exists and it is unique
by Lemma 5.7 of \cite{gmarx}).  A vertex $v$ is \emph{$n$-free} if $n$ is less than the order of $\TT$ and
either no separation in $\TT$ removes $\{v\}$ or the order of $S^G(W(\{v\}))$ is at least $n$; and in particular, $\deg(v)\ge n$.

A \emph{tree decomposition} of a graph $G$ is a pair $(T,\beta)$, where $T$ is a rooted tree and $\beta$
is a function assigning to each vertex of $T$ a subset of $V(G)$, such that for each $v\in V(G)$,
the set $\{t\in V(T):v\in \beta(t)\}$ is nonempty and induces a connected subtree of $T$,
and for each edge $uv\in E(G)$, there exists $t\in V(T)$ with $u,v\in\beta(t)$.  A \emph{star decomposition}
is a tree decomposition $(T,\beta)$ such that $T$ is a star.  The root $s$ of a star decomposition is
called its \emph{center}.  The sets $\beta(t)$, where $t\in V(T)$ is not the center, are called the \emph{tips}.
As in~\cite{gmarx}, we define the \emph{torso} $\tau(s)$ of the center of the decomposition to be the graph
obtained from the subgraph of $G$ induced by $\beta(s)$ by adding a clique with vertex set $\beta(s)\cap \beta(t)$ for each tip $t$.
For a tip $t$, let $(A_t,B_t)$ be the separation of $G$ such that $A_t$ is the subgraph of $G$ induced by $\beta(t)$
and $V(B_t)=V(G)\setminus (\beta(t)\setminus\beta(s))$.
For a tangle $\TT$, we say that the star decomposition is \emph{$\TT$-respecting} if every tip $t$ satisfies $(A_t,B_t)\in \TT$.

We use the structure theorem of Robertson and Seymour~\cite{robertson2003graph}.  However, we will need a formulation
that is slightly stronger than the one presented in~\cite{robertson2003graph}.  Let us first introduce a few definitions.

Let $\TT$ be a tangle in a graph $G$ and suppose that $S$ is a $\TT$-central segregation of $G$ of type $(p,k)$
with an arrangement in a surface $\Sigma$ without boundary.  For $s\in S$, let $\partial s$ denote the boundary vertices of $s$.
We call the elements of $S$ with boundary of size at most three \emph{cells} and the remaining (at most $k$) elements \emph{$p$-vortices}.
We say that $S$ is \emph{maximal} if there exists no $\TT$-central segregation $S'$ of $G$ of type $(p,k)$ with an arrangement in $\Sigma$
such that $S$ and $S'$ have the same $p$-vortices, each cell of $S'$ is a subgraph of a cell of $S$ and the inclusion is strict for at least one cell.
If $S$ is maximal, then let $T(S)$ be the graph such that $V(T(S))=\bigcup_{s\in S}\partial s$ and
$E(T(S))=\{uv:s\in S, |\partial s|\le 3, u,v\in \partial s, u\neq v\}$.
Observe that the maximality of $S$ implies that if $\{u,v,w\}=\partial s$ for $s\in S$, then $s$ contains a path joining $u$ with $v$ disjoint from $w$,
as well as two paths from $w$ to $u$ and to $v$ intersecting only in $w$.
Consequently, if $H$ is triangle-free and it is a topological minor of $T(S)$, then $H$ is a topological minor of $G$.
Furthermore, $T(S)$ has an embedding in $\Sigma$ and the boundary vertices of each $p$-vortex appear in order in the boundary of some face of $T(S)$; we call such a face a \emph{vortex face}.
Note that since $S$ is maximal, $T(S)$ is a minor of $G$.  Each tangle $\TT'$ in a minor of $G$
determines an \emph{induced} tangle $\TT''$ of the same order in $G$ (see \cite{rs10}, (6.1)).  If $\TT''\subseteq \TT$, we say that $\TT'$ is \emph{conformal}
with $\TT$.

\begin{theorem}\label{thm-gm}
For any graph $H$, there exists an integer $k$ such that for any non-decreasing positive function $\phi$, there exist integers $\rho, a, \theta\ge 0$ with the following property.
Let $\TT$ be a tangle of order at least $\theta$ in a graph $G$ controlling no $H$-minor
of $G$. Then there exists $A\subseteq V(G)$ with $|A|\le a$ and a $(\TT-A)$-central maximal segregation $S$ of $G-A$
of type $(p,k)$ for some $p\le \rho$, which has an arrangement in a surface in which $H$ cannot be embedded.
Furthermore, $T(S)$ contains a respectful tangle $\TT'$ of order at least $\phi(p)$ conformal with $\TT-A$,
and if $f_1$ and $f_2$ are vortex faces of $T(S)$ corresponding to distinct $p$-vortices and $d'$ is the metric
defined by $\TT'$, then $d'(f_1,f_2)\ge\phi(p)$.
\end{theorem}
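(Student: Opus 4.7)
The plan is to derive Theorem~\ref{thm-gm} by applying the standard structure theorem of Robertson and Seymour and then post-processing the resulting segregation in three steps. Fix $H$ and let $k$ be the value supplied by the classical structure theorem. Given the function $\phi$, I would apply the structure theorem to $\TT$ with parameters chosen large in terms of $\phi$, $k$ and $H$ (fixed precisely at the end), obtaining an apex set $A\subseteq V(G)$ with $|A|\le a$ and a $(\TT-A)$-central segregation $S_0$ of $G-A$ of type $(p_0,k)$ with $p_0\le \rho_0$, together with an arrangement in a surface $\Sigma$ in which $H$ cannot be embedded. The stronger form of the Robertson--Seymour theorem additionally supplies a respectful tangle in this arrangement whose order can be made arbitrarily large by increasing $\theta$.

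The first refinement is to make the segregation maximal. Repeatedly enlarge cells while preserving the boundaries, the set of $p$-vortices, and the arrangement; because each enlargement only adds vertices, and the total number of vertices is finite, the process terminates in a maximal segregation $S_1$ of the same type. As observed in the paragraph introducing $T(S)$, maximality ensures that every triangle of $T(S_1)$ can be realized by three internally disjoint paths in the corresponding cell, so $T(S_1)$ is in fact a minor of $G-A$. Any respectful tangle on the arrangement of $S_1$ is naturally a tangle on $T(S_1)$, and by the lifting result~\cite{rs10}~(6.1) it induces a tangle $\TT''$ of the same order in $G-A$. Because the arrangement is $(\TT-A)$-central, the orientation of the induced tangle agrees with that of $\TT-A$ on every separation of small order, giving $\TT''\subseteq \TT-A$, i.e.\ conformality.

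The second refinement pushes the vortex faces of $T(S_1)$ apart. Suppose two distinct vortex faces $f_1,f_2$ satisfy $d'(f_1,f_2)<\phi(p)$ in the metric of the current respectful tangle. Take a short $T(S_1)$-normal curve realizing this distance; it, together with the two corresponding $p$-vortices and all cells it meets, bounds a region of the arrangement of bounded diameter that can be absorbed into a single new vortex. Its depth is bounded by a function of the previous depth and $\phi(p)$, and the number of vortices strictly decreases. Since there are at most $k$ vortices initially, after at most $k-1$ such merges, followed by rerunning the maximality step after each, every pair of distinct vortex faces is at distance at least $\phi(p)$. Let $\rho$ be an explicit upper bound on the vortex depth that can arise from this iterated procedure.

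The principal obstacle is the parameter balancing in the merging step: each merge may increase the vortex depth $p$, which raises the target distance $\phi(p)$, and meanwhile the absorption of cells slightly decreases the order of the respectful tangle on the updated $T(S)$. The remedy is to choose $\theta$ so that the initial respectful tangle has order exceeding $\phi$ evaluated at the worst-case final depth; since $k$ is fixed, this worst-case depth is a bounded composition (at most $k-1$ levels) of an explicit function with $\phi$, so the required $\rho$, $a$, and $\theta$ all exist and depend only on $H$ and $\phi$, as claimed.
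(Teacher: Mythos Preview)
Your approach is a genuine alternative to the paper's. The paper does \emph{not} treat the structure theorem as a black box and post-process; instead it reopens the proof of (3.1) in \cite{robertson2003graph}, tracing the construction back through (8.4), (7.7) and (7.4) of \cite{rs15}. The point is that the $p$-vortices arise from a bounded number of bridges $J_1,\ldots,J_k$ attaching to a $\Sigma$-span $L$, and by taking $\theta$ large one can make $L$ so large that these attachment points are already pairwise far apart (in the tangle metric of $L$) before the vortices are ever formed. The respectful tangle $\TT'$ on $T(S)$ is then constructed explicitly from the tangle of a residual span $L'\subseteq L$. So the paper gets both the large respectful tangle and the vortex separation simultaneously, by parameter inflation inside the Robertson--Seymour machinery, with no merging step at all.

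Your merging argument for separating the vortex faces is legitimate and is essentially the ``combine two nearby vortices into one'' manoeuvre (compare Lemma~\ref{lemma-vortext}); the observation that at most $k-1$ merges occur and hence the final depth is a $(k-1)$-fold composition involving $\phi$ is correct. Two points deserve more care, however. First, your appeal to a ``stronger form of the Robertson--Seymour theorem'' that already supplies a respectful tangle of arbitrarily large order on the arrangement is doing real work: producing that tangle on $T(S)$, conformal with $\TT-A$, is half of what Theorem~\ref{thm-gm} asserts, and the standard statement (e.g.\ (3.1) of \cite{robertson2003graph}) does not hand it to you. You would need either to cite \cite{rs17} directly (as the paper also notes suffices) or to extract the tangle from a large wall/span controlled by $\TT$, which is precisely the content of the paper's outline. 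Second, each merge does not ``slightly'' decrease the tangle order: absorbing a region of $d'$-diameter up to $\phi(p)$ costs order on that scale (cf.\ Lemma~\ref{lemma-clearing}), so your initial order must exceed roughly $k\cdot\phi(\rho)$, not just $\phi(\rho)$. This is still bounded and your scheme closes, but the bookkeeping is heavier than you indicate, which is exactly the trade-off your approach makes against the paper's internal-parameter route.
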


Theorem~\ref{thm-gm} is implied by the results of \cite{rs17}.  Perhaps more straightforwardly,
it can also be proved by a modification of the argument of \cite{robertson2003graph}, as follows.

\begin{proof}[Outline of a proof of Theorem~\ref{thm-gm}.]
We proceed as in the proof of the analogous claim 3.1 in \cite{robertson2003graph}, except that we increase $\theta$
so that the resulting segregation $S$ has the required properties.
The segregation $S$ and its arrangement are obtained in \cite{robertson2003graph} in the proof of 6.1 by applying a variant of
(8.4) of \cite{rs15} to a $\Sigma$-span $L$ whose order can be made arbitrarily large by choosing $\theta$ sufficiently large.
The proof of (8.4) in \cite{rs15} is based on (7.7) of \cite{rs15}, in which by choosing the parameters appropriately
($\theta$ large enough relatively to $\theta'$, taking the function $\phi$ into account), the points where the bridges $J_1$, \ldots, $J_k$ (that eventually give rise to $p$-vortices)
attach to $L$ can be assumed to be mutually far apart (relatively to $\phi(p)$) in the metric defined by the tangle of $L$.
The main part ${\cal S}_0$ of the arrangement $S$ is obtained in (7.4) of \cite{rs15} and it is compatible
with $L$.  Parts of $L$ around the attachment vertices of $J_1$, \ldots, $J_k$ are cleared out when the
parts of $S$ at $p$-vortices are created; however, this decreases the order of the tangle as well as the distances
according to it by at most some function of $\theta'$.  Therefore, the parameters can be chosen so that
there exists a $\Sigma$-span $L'$ of order at least $\phi(p)$ compatible with the arrangement of $S$,
and if $d''$ is the metric defined by the tangle $\TT''$ of $L'$, then the distances between the faces
of $L'$ containing the $p$-vortices according to $d''$ are at least $\phi(p)$.
Note that we can assume that $S$ is maximal.

We can define the tangle $\TT'$ in $T(S)$ of the same order as $\TT''$ as follows.
Consider a separation $(A,B)$ of $T(S)$.  If $s$ is a cell of $S$, then $\partial s$
induces a clique in $T(S)$, and thus either $\partial s\subseteq V(A)$ or $\partial s\subseteq V(B)$.
Let $A'$ be the union of cells $s\in S$ such that $\partial s\subseteq V(A)$ and $B'$ the union of
cells $s\in S$ such that $\partial s\not\subseteq V(A)$.  Let $A''=A'\cap L'$ and $B''=B'\cap L'$.
Note that $(A'',B'')$ is a separation of $L'$ whose order is at most the order of $(A,B)$.
We put $(A,B)$ to $\TT'$ if and only if $(A'',B'')\in \TT''$.
It is easy to see that $\TT'$ is respectful tangle in $T(S)$ conformal with $\TT-A$ of order at least $\phi(p)$
and that the corresponding distances between vortex faces are at least $\phi(p)$.
\end{proof}

For $p$-vortices, we use the following characterization, which is essentially (8.1) of \cite{rs9}.

\begin{lemma}\label{lemma-pvortex}
If $F$ is a $p$-vortex with boundary $v_1$, \ldots, $v_m$, then $G$ has a path decomposition with bags $X_1$, \ldots, $X_m$ in order,
such that $v_i\in X_i$ for $1\le i\le m$ and $|X_i\cap X_j|\le p$ for $1\le i<j\le m$.
\end{lemma}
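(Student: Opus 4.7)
The plan is to derive the path decomposition directly from the linear structure built into the definition of a $p$-vortex, and then to strengthen the intersection bound from consecutive bags to arbitrary pairs using the standard interpolation property of path decompositions.

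By the definition used in \cite{rs12, robertson2003graph}, a $p$-vortex $F$ with boundary $v_1, \ldots, v_m$ comes equipped with a linear decomposition $X_1, \ldots, X_m$ (indexed along the boundary cycle) satisfying $v_i \in X_i$, $V(F) = X_1 \cup \cdots \cup X_m$, every edge of $F$ lying in some $X_i$, the bags containing any fixed vertex forming a contiguous interval, and $|X_i \cap X_{i+1}| \le p$ for all $1 \le i < m$. If the working definition is phrased instead in terms of a nested family of separations of $F$ of order at most $p$ tied to the boundary vertices, one recovers the $X_i$ by taking the vertex set of the $i$-th separator together with $\{v_i\}$ and checks the decomposition properties directly; this translation is essentially the content of (8.1) of \cite{rs9}, and can be regarded as the only genuinely technical step of the proof.

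The remaining step is to promote the intersection bound. Fix $i < j$. In any path decomposition, the set of indices $k$ with $w \in X_k$ is an interval for every vertex $w$. Hence if $w \in X_i \cap X_j$, then $w \in X_k$ for every $i \le k \le j$; in particular $w \in X_{i+1}$. This yields the set inclusion $X_i \cap X_j \subseteq X_i \cap X_{i+1}$, and therefore $|X_i \cap X_j| \le |X_i \cap X_{i+1}| \le p$, as required. The main obstacle is thus confined to the first paragraph: once the definition of $p$-vortex has been unpacked into a path decomposition of the required form, the final bound follows by a single application of the interpolation property, with no further delicate step.
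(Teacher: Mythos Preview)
Your proposal is correct. The paper does not supply its own proof of this lemma; it simply attributes the statement to (8.1) of \cite{rs9} and treats it as known. Your write-up unpacks the $p$-vortex definition into a linear decomposition with consecutive-bag intersections of size at most $p$ (deferring the genuinely technical translation to the same citation), and then correctly upgrades the bound to arbitrary pairs via the interpolation property $X_i\cap X_j\subseteq X_i\cap X_{i+1}$. This is precisely the intended content behind the citation, so your approach matches the paper's, only with the standard details made explicit.
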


We call a path decomposition satisfying the conditions of Lemma~\ref{lemma-pvortex} a \emph{standard decomposition of a $p$-vortex}.
The following result, which is (3.2) of \cite{rs12}, describes sufficient conditions for the existence of
a rooted minor in a graph embedded in a surface.  A set $X$ is \emph{free} with respect to
a tangle $\TT$ if $|X|$ is at most the order of $\TT$ and there exists no $(A,B)\in\TT$ of order less than $|X|$ with $X\subseteq V(A)$.

\begin{theorem}\label{thm-gmembed}
For every surface $\Sigma$ without boundary and integers $k$ and $z$,
there exists an integer $\theta$ such that the following holds.
Let $G$ be embedded in $\Sigma$, let $\TT$ be a respectful tangle
in $\Sigma$ of order at least $\theta$ and let $d$ be the metric defined by $\TT$.
Let $f_1$, \ldots, $f_k$ be faces of $G$ such that $d(f_i,f_j)\ge\theta$ for $1\le i<j\le k$.
Let $Z$ be a set of at most $z$ vertices such that each $v\in Z$ is incident with one of $f_1$, \ldots, $f_k$.
Assume that $Z\cap f_i$ is free for $1\le i\le k$ and let $\Delta_1,\ldots, \Delta_k\subset \Sigma$ be pairwise disjoint
closed disks such that $G\cap \Delta_i=Z\cap f_i$.  If $M$ is a forest with $Z\subseteq V(M)$ embedded in $\Sigma$ so that
$M\cap \Delta_i=Z\cap f_i$ for $1\le i\le k$, then there exists a forest $M'\subseteq G$ such that
$M'\cap (\Delta_1\cup\ldots\cup \Delta_k)=Z$ and two vertices of $Z$ belong to the same component of $M'$ if
and only if they belong to the same component of $M$.
\end{theorem}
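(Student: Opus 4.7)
The plan is to realize $M$ as a subgraph $M' \subseteq G$ by routing each edge of $M$ as an internally vertex-disjoint path in $G$ that tracks the embedded image of that edge in $\Sigma$. The union of these paths is then a subdivision of $M$, hence a forest, and the partition of $Z$ induced on its components agrees with that of $M$. The condition $M' \cap (\Delta_1 \cup \ldots \cup \Delta_k) = Z$ is enforced by routing each $P_e$ inside $\Sigma \setminus \bigcup_i \mathrm{int}(\Delta_i)$, with endpoints in the prescribed free subsets.

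I would order the edges $e_1, \ldots, e_r$ of $M$ arbitrarily (with $r \le |E(M)| < z$) and route them one at a time. At stage $j$, writing $e_j = u_j v_j$, let $U_{j-1}$ consist of the internal vertices of the previously routed paths $P_1, \ldots, P_{j-1}$ together with $Z \setminus \{u_j, v_j\}$; this set has bounded size, controlled by $z$ and the lengths of the prior paths. Standard tangle-deletion arguments (as used throughout \cite{gmarx}) show that $\TT$ restricts to a respectful tangle in $G - U_{j-1}$ whose order decreases by at most a controlled function of $|U_{j-1}|$, and likewise the $\TT$-distance between any two of the faces $f_1, \ldots, f_k$ decreases by at most a controlled amount. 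The freeness of each $Z \cap f_i$ in $\TT$ carries over to freeness of the singletons $\{u_j\}$ and $\{v_j\}$ in the restricted tangle, because the internal vertices of previously routed paths lie deep in the tangle bulk and cannot be assembled into small separators isolating an individual root vertex.

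The remaining task at each stage is thus a single-path rooted linkage problem in a surface with a respectful tangle of large order: find a $u_j v_j$-path through $G - U_{j-1}$ whose topological type in $\Sigma$ matches that of $e_j$ (so that distinct paths remain pairwise disjoint except at their shared roots). This is a well-known application of large representativity: one extracts a large flat (grid-like) neighborhood around each of the endpoint faces $f_{i(j)}$ and $f_{i'(j)}$ via the Robertson-Seymour flatness arguments underpinning Theorem~\ref{thm-gm}, and then routes the path along a geodesic corridor in the tangle metric joining the two neighborhoods, adjusting its trajectory inside each flat neighborhood so as to attain the prescribed homotopy class dictated by the embedding of $e_j$.

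The main obstacle is calibrating $\theta$ so that enough tangle order survives all $r$ iterations. Each iteration shrinks the effective tangle order and the effective face-distances by an amount depending on the length of the path routed in that step, which in turn depends on the tangle order available for that step. Since $r < z$ is bounded, this can be resolved by a bounded recursion: starting from the threshold $\theta_0 = \theta_0(\Sigma, k, z)$ needed for a single-path routing step in a surface with prescribed free endpoints on far-apart faces, define $\theta_{j+1} = \theta_j + g(\theta_j, z)$ where $g$ bounds the loss of tangle order and distance incurred by one routing, and set $\theta = \theta_{z-1}$.
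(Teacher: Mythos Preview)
The paper does not prove this theorem: it is quoted as (3.2) of \cite{rs12} and used as a black box. So there is no ``paper's own proof'' to compare against, only the Robertson--Seymour argument.

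Your proposal has a genuine gap. You route the edges of $M$ one at a time and, before routing $e_j$, delete the set $U_{j-1}$ consisting of the internal vertices of the previously routed paths. You then claim that the loss of tangle order and of face-distance is bounded by a function $g(\theta_j,z)$, so that a bounded recursion in $z$ suffices. But the size of $U_{j-1}$ is the total length of the paths $P_1,\ldots,P_{j-1}$, and nothing in the hypotheses bounds these lengths by any function of the tangle order. Already in the plane, take a $k\times n$ grid with $n\gg k$: the respectful tangle has order roughly $k$, a free pair of vertices can sit on opposite short sides, and any path between them has length at least $n$. Deleting such a path destroys the tangle entirely. So $g$ does not exist as a function of $\theta_j$ and $z$ alone, and the recursion $\theta_{j+1}=\theta_j+g(\theta_j,z)$ cannot be set up.

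The Robertson--Seymour proof avoids this by \emph{not} deleting previously routed paths. Instead it works globally: one finds the whole forest at once, essentially by first producing a suitable minor model (using the tangle metric to carve out disjoint disks and connecting regions whose \emph{widths}, not lengths, are bounded) and then extracting the actual subgraph. The key is that the cost charged against the tangle is the width of a corridor, which is controlled by $\theta$, rather than its length, which is not. If you want to salvage an iterative scheme, you would need to clear a bounded-width zone or strip containing each $P_i$ (in the spirit of Lemma~\ref{lemma-clearing}) rather than deleting $V(P_i)$, and then deal with the fact that clearing a strip between two distant faces is not clearing a disk and may alter the topology. That is substantially harder than what you wrote and is, in effect, the content of the cited result.
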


We will also need an auxiliary result concerning the metric derived from a respectful tangle (see (9.2) in \cite{rs14}).
Let $G$ be a graph with a $2$-cell embedding in a surface $\Sigma$ and let $\TT$ be a respectful tangle in $G$ with metric $d$.
If $a$ is a vertex or a face of $G$, then a \emph{$t$-zone around $a$} is an open disk $\Lambda\subset \Sigma$
bounded by a cycle $C\subseteq G$ such that $a\subseteq\Lambda$ and $d(a,a')\le t$ for all atoms $a'$ of $G$
contained in $\overline{\Lambda}$.

\begin{lemma}\label{lemma-zone}
Let $G$ be a graph with a $2$-cell embedding in $\Sigma$, let $\TT$ be a respectful tangle in $G$ of order $\theta$
and let $d$ be the metric derived from $\TT$.
Let $a$ be a vertex or a face of $G$, and let $2\le t\le \theta-3$.  Then there exists a $(t+2)$-zone $\Lambda$ around $a$
such that every atom $a'$ of $G$ with $d(a,a')<t$ satisfies $a'\subseteq \Lambda$.
\end{lemma}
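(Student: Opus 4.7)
The plan is to construct the $(t+2)$-zone $\Lambda$ around $a$ as the open disk bounded by a cycle of $G$ that encloses the combinatorial ball of radius $t$ around $a$. The construction is topological: I work inside $\Sigma$ and carve out a disk from the complement of atoms that are far from $a$.

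First, I would let $Z$ denote the union of the closures of all atoms at distance at most $t$ from $a$, regarded as a closed subset of $\Sigma$. Since $a$ is itself at distance $0$, we have $a \subseteq Z$. I then examine the connected components of $\Sigma \setminus Z$. Each such component $U$ is an arc-connected open subset of $\Sigma$, and its topological boundary inside $\Sigma$ meets $G$ only in vertices of distance exactly $t$ or $t+1$ from $a$ (since atoms at distance $\le t-1$ lie in the interior of $Z$, while atoms at distance $\ge t+2$ are separated from $Z$ by at least one full ``ring''). Consequently the boundary of $U$ is carried by a closed walk in $G$ whose vertices and incident faces all lie at distance at most $t+1$ from $a$, and atoms inside $U$ that touch its boundary lie at distance at most $t+2$.

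The next step is the topological heart of the argument: using that $\TT$ is respectful of order $\theta \ge t+3$, I would argue that at most one component of $\Sigma \setminus Z$ is ``big'' in the tangle sense, and that every other component is an open disk whose boundary in $G$ is a simple cycle. Indeed, any non-disk component would yield a $G$-normal closed curve that is non-contractible or separates $\Sigma$ non-trivially, and together with the small number of atoms it meets this would give a separation of order less than $\theta$ violating respectfulness (compare the standard ``small closed curves bound disks'' consequence of respectfulness). I would then form $\Lambda$ by taking the union of $Z$, together with every small-disk component of $\Sigma \setminus Z$ and its boundary, and finally taking the interior. The resulting $\Lambda$ is an open disk bounded by a cycle $C \subseteq G$, every atom in $\overline{\Lambda}$ is at distance at most $t+2$ from $a$, and every atom at distance strictly less than $t$ lies in the interior (since such atoms are not on the $(t,t+1)$-frontier and cannot lie on $C$).

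The main obstacle is precisely the topological bookkeeping of the previous step: a priori, the boundary of a component of $\Sigma \setminus Z$ could be a non-simple closed walk (using the same vertex twice), could consist of several disjoint closed curves, or could bound a region of non-disk topology (for instance an annulus when $\Sigma$ has positive genus). Ruling all of these possibilities out simultaneously requires packaging up several closed curves into a single separation and invoking respectfulness together with $t+3 \le \theta$ to force the resulting small side to collapse to a disk. The remainder of the proof, that $\overline{\Lambda}$ has all atoms within distance $t+2$ and that every atom at distance $< t$ falls inside $\Lambda$, then follows from the construction by a direct check.
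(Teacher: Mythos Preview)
The paper does not prove Lemma~\ref{lemma-zone} at all: it is quoted as (9.2) of~\cite{rs14} and used as a black box. So there is no ``paper's own proof'' to compare against; what you have written is an attempt to reprove a result from the Robertson--Seymour series.

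Your plan is broadly the right one and matches the spirit of the argument in Graph Minors~XIV: take the closed ``ball'' $Z$ of atoms at distance at most $t$, look at the components of the complement, fill in the small ones, and keep the one big one outside. Two remarks. First, your description of the frontier is slightly off: a vertex on $\partial Z$ is an atom in $Z$, hence at distance at most $t$ from $a$, and it is adjacent to an atom outside $Z$, hence at distance at least $t$; so it sits at distance exactly $t$, not ``$t$ or $t+1$''. This actually helps you, since it is what forces atoms at distance $<t$ into the interior of $\Lambda$. Second, the obstacle you flag is genuine and is exactly where the work lies: one must show that after filling, the remaining region is an open disk bounded by a single \emph{cycle} of $G$ (the definition of a zone requires this), not merely a closed walk or a union of curves. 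In~\cite{rs14} this is handled via the machinery of respectful tangles and the associated radial/level structure built up earlier in that paper; your sketch gestures at ``invoking respectfulness together with $t+3\le\theta$'', but turning that into a clean argument that the boundary is a simple cycle is not a one-liner, and you should expect to lean on prior lemmas from that paper rather than redo it from scratch.
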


\emph{Clearing a zone} (i.e., removing everything contained inside it from the graph) does not affect the order of the tangle
or the distances significantly, see (7.10) of \cite{rs12}:

\begin{lemma}\label{lemma-clearing}
Let $\Lambda$ be a $t$-zone around some vertex or face of a graph $G$ embedded in $\Sigma$ with a respectful tangle $\TT$ of
order $\theta\ge 4t+3$ and let $d$ be the metric derived from $\TT$.  Let $G'$ be the graph obtained from $G$ by clearing $\Lambda$.
Then, there exists a unique respectful tangle $\TT'$ in $G'$ of order $\theta-4t-2$ defining a metric $d'$ such that
whenever $a',b'$ are atoms of $G'$ and $a,b$ atoms of $G$ with $a\subseteq a'$ and $b\subseteq b'$, then
$d(a,b)-4t-2\le d(a',b')\le d(a,b)$.  Furthermore, $\TT'$ is conformal with $\TT$.
\end{lemma}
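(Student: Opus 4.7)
The plan is to define $\TT'$ by pulling separations of $G'$ back to separations of $G$ via a bounded-cost extension, and to derive the metric bounds by converting $G$-normal curves to $G'$-normal curves and conversely. Let $C\subseteq G$ be the cycle bounding $\Lambda$, and let $K$ denote the subgraph of $G$ consisting of $C$ together with everything embedded in $\Lambda$, so that $G'=G-(V(K)\setminus V(C))$ inherits an embedding in $\Sigma$ in which $\Lambda$ lies inside a single face.

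First I would construct the extension map on separations. Given a separation $(A',B')$ of $G'$ of order $k<\theta-4t-2$, its separator meets $C$ in at most $k$ vertices and splits $C$ into arcs each of which lies entirely in $A'$ or entirely in $B'$. Inside $\Lambda$ I would route cut-curves separating the two colour classes of arcs; using the $t$-zone property and Lemma~\ref{lemma-zone}, these curves can be chosen to pass through at most $4t+2$ additional vertices of $K$ in total. The resulting extension $(A,B)$ of $G$ then has order at most $k+4t+2<\theta$, and I would declare $(A',B')\in\TT'$ iff $(A,B)\in\TT$. Well-definedness follows because any two valid extensions differ only inside $\overline{\Lambda}$, which has tangle-diameter bounded in terms of $t$; since $\TT$ cannot distinguish separations that agree outside such a small region, both extensions lie on the same side of $\TT$.

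Next I would verify that $\TT'$ is a respectful tangle of order $\theta-4t-2$ conformal with $\TT$. The tangle axioms transfer from $\TT$ via the extension map, because a violation in $\TT'$ would extend to a violation in $\TT$. Respectfulness holds because a $G'$-normal simple closed curve in $\Sigma$ can be perturbed across the enlarged face to yield a $G$-normal curve realising precisely the extension constructed above; conformality is then immediate from the definition of the extension. For the metric inequality $d'(a',b')\le d(a,b)$, any $G$-normal curve from $a$ to $b$ can be rerouted across the face of $G'$ enclosing $\Lambda$ without increasing its number of intersections with $G'$. Conversely, for $d(a,b)\le d'(a',b')+4t+2$, a shortest $G'$-normal curve from $a'$ to $b'$ is lifted to a $G$-normal curve by paying at most $4t+2$ extra crossings when it traverses $\overline{\Lambda}$, again using the $t$-zone bound and Lemma~\ref{lemma-zone}.

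The main obstacle is the extension step: routing the cut-curves inside $\Lambda$ and controlling their total cost by $4t+2$ requires a careful topological argument about how the separator $V(A'\cap B')$ meets $C$ and how the arcs of $C\setminus V(A'\cap B')$ interleave across $\Lambda$. This combinatorial routing is essentially the content of (7.10) in \cite{rs12}; once it is in place, the remaining steps---tangle axioms, respectfulness, metric bounds, and conformality---reduce to routine bookkeeping on the extension map.
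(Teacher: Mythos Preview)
The paper does not give its own proof of this lemma; it simply cites it as (7.10) of \cite{rs12} (Robertson and Seymour, Graph Minors~XII) and states the conclusion. There is therefore nothing in the paper to compare your attempt against.

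Your sketch is a reasonable outline of how such an argument would go, and you yourself identify the crux correctly: the routing of cut-curves inside $\Lambda$ with total extra cost at most $4t+2$ is exactly the technical content of (7.10) in \cite{rs12}, and you explicitly defer to that reference for it. In that sense your proposal is less an independent proof than a roadmap that points back to the same citation the paper uses. One remark on the approach: in the Robertson--Seymour treatment the tangle $\TT'$ is typically constructed via the slope function $\ins$ (converting $G'$-normal closed curves to $G$-normal ones and reading off $\ins$) rather than by extending arbitrary separations; this sidesteps the well-definedness issue you flag, since the slope of a respectful tangle determines it. Your separation-extension approach can be made to work too, but the argument that two different extensions land on the same side of $\TT$ is itself nontrivial and would need the same curve-routing machinery.
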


\section{Finding a topological minor}\label{sec-top}

In this section, we prove a key result (Lemma~\ref{lemma-main}) that enables us to restrict the structure of a graph $G$ without topological minor $H$,
where $G$ is almost embedded in a surface in that $H$ can be embedded.  First, let us argue that we can assume that $H$ is embedded
in a nice way.  An embedding of a graph $H'$ in a surface $\Sigma$ is \emph{nice} if it is closed $2$-cell
and contains a set $S$ of faces with $|S|=\faces(H',\Sigma)$ such that each vertex of $H'$ of degree at least four is incident with exactly one face of $S$.

\begin{lemma}\label{lemma-niceh}
Let $H$ be a graph that can be embedded in a surface $\Sigma$.  Then there exists triangle-free graph $H'$ containing $H$ as a topological minor
such that $\faces(H',\Sigma)=\faces(H,\Sigma)$ and $H'$ has a nice embedding in $\Sigma$.
\end{lemma}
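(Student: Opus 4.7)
The plan is to build $H'$ from an optimal embedding of $H$ in $\Sigma$ in three stages, each of which adds only long subdivided paths attached at degree-two subdivision vertices; this keeps the set of $(\ge 4)$-vertices unchanged and avoids creating triangles. Fix an embedding of $H$ in $\Sigma$ with dominating face set $S_0=\{f_1,\dots,f_s\}$, $s=\faces(H,\Sigma)$, and for each $(\ge 4)$-vertex $v$ choose one designated face $F(v)\in S_0$ incident with $v$ together with one designated corner of $v$ at $F(v)$. Subdivide every edge of $H$ many times, producing a triangle-free $H_1$ that carries plentiful fresh degree-two vertices near every original vertex.

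Next I perform \emph{detaching}: for every non-designated corner $c$ of every $(\ge 4)$-vertex $v$, add a sufficiently long subdivided path inside the face at $c$, running from a fresh subdivision vertex on one of the two edges bounding $c$ to a fresh subdivision vertex on the other. This cuts off a ``pocket'' face bounded by a simple cycle of length at least four that passes through $v$. After all detachings, $v$ lies on the boundary of $F(v)$ only through its designated corner and on the boundary of no other face of $S_0$; the new pocket faces are not in $S_0$.

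To make the embedding closed $2$-cell, consider each remaining face $f$ whose closure is not an open disk bounded by a simple cycle. Such an $f$ is a connected open surface of finite topological type, and a standard polygon-decomposition argument provides a properly embedded graph $T_f\subseteq f$ with endpoints on $\partial f$ such that $f\setminus T_f$ is a single open disk whose boundary is a simple cycle containing all of $\partial f$. Realize $T_f$ by long subdivided paths attached at fresh degree-two vertices on $\partial f$. Call the resulting graph $H'$ and set $S'=\{f_1^*,\dots,f_s^*\}$, where $f_i^*$ is the single disk into which $f_i$ has been cut. Then $H'$ is triangle-free, contains the subdivision $H_1$ of $H$, has a closed $2$-cell embedding in $\Sigma$, and every $(\ge 4)$-vertex (which is one of the original $(\ge 4)$-vertices of $H$) is incident with exactly one face of $S'$, namely the one arising from its designated face. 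Hence $\faces(H',\Sigma)\le s$. For the reverse inequality, any embedding of $H'$ in $\Sigma$ restricts via $H_1\subseteq H'$ to an embedding of $H$ whose faces are unions of faces of $H'$, so any dominating set for $H'$ projects to a dominating set for $H$ of no larger size, giving $\faces(H',\Sigma)\ge\faces(H,\Sigma)=s$.

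The main point of care is the degree bookkeeping: every detaching path and every cutting path raises by one the degree of each of the two subdivision vertices it attaches to, so one must route distinct new paths to distinct fresh vertices in order to keep every non-$(\ge 4)$-vertex of $H'$ at degree at most three. This is precisely what generous subdivision in the first step provides---disjoint pools of fresh subdivision vertices, two per corner of each $(\ge 4)$-vertex and a few more for each edge of each $T_f$---so that no degree-two vertex of $H_1$ ever receives two new incidences.
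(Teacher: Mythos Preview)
Your overall strategy matches the paper's: construct $H'$ from an optimal embedding by adding long subdivided paths at fresh degree-two vertices, and use the topological-minor relation for the reverse inequality. The degree bookkeeping and the argument that $\faces(H',\Sigma)\ge s$ are fine.

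The gap is in Step~4. You claim that for each face $f$ not bounded by a simple cycle one can add a properly embedded graph $T_f\subseteq f$ so that $f\setminus T_f$ is a \emph{single} open disk with simple-cycle boundary. This is impossible in general. If $f$ is already a disk but some vertex $u$ appears twice on its boundary walk, then any arc with both ends on $\partial f$ separates $f$ into two pieces, while any subgraph meeting $\partial f$ in a single point leaves $f\setminus T_f$ connected but with every edge of $T_f$ traversed twice in the boundary walk---never a simple cycle in the graph. (The polygon-decomposition picture you invoke cuts a surface open to a polygon whose boundary repeats each cut arc twice; that repetition is exactly the obstruction here.) Hence ``the single disk into which $f_i$ has been cut'' need not exist, and your definition of $S'$ breaks down.

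The paper proceeds in a different order and, crucially, allows faces of $S$ to split. First one adds non-separating arcs until every face is a disk (each $f_i$ stays a single face). Then one eliminates every multiple incidence by the same subdivide-and-join move applied at a pair of edges incident with the offending vertex---this is precisely your detaching operation, but applied to \emph{all} repeated incidences, not only those of $(\ge 4)$-vertices---and one explicitly lets faces of $S$ split, always retaining the large remnant rather than the small pocket as the new representative of $f_i$. Since each pocket is cut off near a corner of the vertex being processed, and after your Step~3 every $(\ge 4)$-vertex already meets $R_{f_i}$ at only its designated corner, those designated corners all survive on the final remnant. Your argument becomes correct once Step~4 is rewritten along these lines.
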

\begin{proof}
Consider an embedding of $H$ in $\Sigma$ with a set $S$ of faces of $H$ dominating $(\ge\!4)$-vertices
such that $|S|=\faces(H,\Sigma)$.  If $H$ has a face $f$ not homeomorphic to an open disk,
then there exists a simple curve $c\subseteq f$ joining distinct points inside some edges $e_1$ and $e_2$ incident with $f$,
such that $c$ does not separate $f$.  We subdivide $e_1$ and $e_2$ and add an edge drawn along $c$.
We iterate this procedure until every face is an open disk (the number of iterations is bounded by a function of $H$ and $\Sigma$).
Next, we eliminate multiple incidences of vertices with faces by performing the same operation on pairs of edges incident with such a vertex,
thus obtaining a closed $2$-cell embedding.
Using the same transformation, we eliminate incidences of vertices of degree at least four with multiple faces in $S$.  Finally, we subdivide the
edges of the graph, obtaining a triangle-free graph $H'$ with a nice embedding in $\Sigma$.  By the construction, $H$ is a topological minor
of $H'$ and $\faces(H',\Sigma)\le\faces(H,\Sigma)$.  On the other hand, since $H$ is a topological minor of $H'$, we have $\faces(H',\Sigma)\ge\faces(H,\Sigma)$.
\end{proof}

Next, we prove a lemma on vortices.
If $X_1$, \ldots, $X_m$ are bags of a path decomposition of a graph $F$ in order, then for $v\in V(F)$, let $I(v)$ denote
the interval $\{i,i+1,\ldots, j\}$, where $i$ is the minimum index such that $v\in X_i$ and $j$ is the maximum index such that $v\in X_j$;
and let $B(v)=(X_i\cap X_{i-1})\cup (X_j\cap X_{j+1})$, where we set $X_0=X_{m+1}=\emptyset$.
If $F$ is a $p$-vortex with boundary vertices $v_1$, \ldots, $v_m$ (where $v_i\in X_i$ for $1\le i\le m$),
then we let $X(v_i)$ denote the set $\{v_i\}\cup (X_i\cap X_{i-1})\cup (X_i\cap X_{i+1})$.
When necessary to avoid ambiguity, we write $I_F(v)$, $B_F(v)$ and $X_F(v)$, respectively.

Suppose that $G=G_0\cup G_1\cup\ldots\cup G_k$, where $G_1$, \ldots, $G_k$ are $p$-vortices.  For $1\le i\le k$, we say that $G_i$ is \emph{circumscribed}
if there exists a cycle $v_1\ldots v_m\subseteq G_0$, where $v_1$, \ldots, $v_m$ are the boundary vertices of $G_i$ in order.
\begin{lemma}\label{lemma-vortfree}
Let $G=G_0\cup G_1\cup \ldots \cup G_k$, where $G_1$, \ldots, $G_k$ are vertex-disjoint circumscribed $p$-vortices.
For every integer $n\ge 0$, there exists $n'\ge 0$ such that the following holds.
Let $\TT_0$ be a tangle in $G_0$ of order $\phi>n'$ and let $\TT$ be the tangle of the same order induced in $G$ by $\TT_0$.
Let $v_1$, \ldots, $v_m$ be the vertices of $V(G_0)\cap V(G_1)$
and let $X_1$, \ldots, $X_m$ be bags of a standard path decomposition of $G_1$ in order, where $v_i\in X_i$ for $1\le i\le m$.
If $v\in V(G_1)\setminus V(G_0)$ is $n'$-free in $\TT$, then there exists a set $Y\subseteq V(G_0)\cap V(G_1)$ of size $n$
that is free with respect to $\TT_0$ and paths $P_1, \ldots, P_n\subseteq G_1$ disjoint from $B(v)$ joining $v$ to the vertices of $Y$ and
intersecting only in $v$.
\end{lemma}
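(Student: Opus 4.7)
The plan is to choose $n'$ sufficiently large---as a function of $n$, $p$, and $k$ absorbing both the size of $B(v)$ and the overhead incurred when lifting separations of $G_0$ to $G$ across the vortex path decompositions---and then to exploit the $n'$-freeness of $v$ in the induced tangle $\TT$.

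First, I would note that $B(v)$ has size at most $2p$ and is a separator of $G_1$: since the standard path decomposition is monotone, the component $C$ of $v$ in $G_1 - B(v)$ is contained in $\bigcup_{l \in I(v)} X_l$, and hence $C \cap V(G_0) \subseteq \{v_l : l \in I(v)\}$. Next, I would show by a Menger argument that there are at least $N := n' - 2p$ internally disjoint paths from $v$ to $V(G_0) \cap V(G_1) \setminus B(v)$ in $G_1 - B(v)$. Indeed, any separator $S$ of $G_1 - B(v)$ of size less than $N$ would combine with $B(v)$ into a separator of size less than $n'$ in $G$ isolating $v$ (since any exit from $G_1$ must pass through an unblocked boundary vertex). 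The resulting separation of $G$ has small $G_0$-trace, so by the definition of the induced tangle it belongs to $\TT$, contradicting the $n'$-freeness of $v$.

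Let $Q_1, \ldots, Q_N$ be the resulting paths, with distinct endpoints $U = \{u_1, \ldots, u_N\} \subseteq V(G_0) \cap V(G_1) \setminus B(v)$. I would extract a free $n$-subset of $U$ in $\TT_0$ by contradiction. Suppose no such subset exists. A submodular combination of tangle separations then yields a single $(A_0, B_0) \in \TT_0$ of order $\sigma < n$ with $|U \setminus V(A_0)| \le n - 1$. I would lift $(A_0, B_0)$ to a separation $(A, B) \in \TT$ by partitioning each vortex along its standard path decomposition: the boundary cycle of each vortex is cut into arcs by the $(A_0, B_0)$-trace, and at each of at most $\sigma$ switches per vortex the cut is extended by the corresponding bag of size at most $p$. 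The lifted order is thus at most $\sigma(1 + kp)$. A counting of forced crossings of the paths $Q_1, \ldots, Q_N$---of which all but at most $n - 1$ end in $V(A_0) \subseteq V(A)$ and which are internally disjoint in $G_1 - B(v)$---shows that $v$ must lie in $V(A) \setminus V(B)$. Choosing $n' > n(2kp + 3) + 2p$, the lifted order is strictly less than $n'$, contradicting $n'$-freeness. Hence $U$ contains a free $n$-subset $Y$, and the corresponding $n$ paths among $Q_1, \ldots, Q_N$ serve as the desired $P_1, \ldots, P_n$.

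I expect the main technical obstacles to be (i) the lifting step---precisely bounding the overhead from each vortex's path decomposition and verifying that $v$ lands on the $A$-side---and (ii) the submodular extraction of a single common trap separation when no $n$-subset of $U$ is free, which relies on the uniqueness of the deepest trap in a tangle.
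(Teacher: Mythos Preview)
Your approach has a genuine gap in the final step, precisely at the obstacle you flagged as (i): placing $v$ strictly on the $A$-side of the lifted separation.

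The crossing count only yields $v\in V(A)$. Indeed, if $v\in V(B)\setminus V(A)$ then the $N$ internally disjoint paths to $U\subseteq V(A)$ force $|V(A)\cap V(B)|\ge N$, a contradiction. But nothing rules out $v\in V(A)\cap V(B)$. With the natural bag-by-bag lifting (assign bag $X_l$ to the side of $v_l$), the vertex $v$ lands in the separator whenever the interval $I(v)$ straddles a switch of the boundary cycle of $G_1$, and this is entirely possible: take $I(v)=[1,M]$ with $v_1,\ldots,v_{M/2}\in V(A_0)\setminus V(B_0)$, one separator vertex $v_{M/2+1}\in V(A_0)\cap V(B_0)$, and $v_{M/2+2},\ldots,v_M\in V(B_0)\setminus V(A_0)$; here $\sigma$ can be as small as $1$ while $I(v)$ straddles the unique switch. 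To push $v$ into $V(A)\setminus V(B)$ you must absorb all of $v$'s neighbours lying in $B$-only bags (or equivalently all boundary vertices $v_l$ with $l\in I(v)$ and $v_l\in V(B_0)\setminus V(A_0)$) into the separator, and this set has no bound in terms of $n,p,k$. Consequently you cannot produce a separation in $\TT$ of order $<n'$ that \emph{removes} $v$, and $n'$-freeness (which is defined via removal, not mere containment in $V(A)$) is not contradicted.

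The paper avoids this entirely by anchoring to an external free set. It first picks a free set $Z\subseteq V(G_0)$ of size $\phi$ (using that free sets form a matroid of rank $\phi$), and then combines the $n'$-freeness of $v$ with the freeness of $Z$ to run Menger in $G$ directly between $v$ and $Z$, obtaining $n'$ paths. Truncating each at its first $G_0$-vertex yields $Y'\subseteq\partial G_1$; the circumscribing cycles are then used to bound from below the order of any $G_0$-separation between $Y'$ and $Z$, giving $2(n+2p)$ disjoint $Y'$--$Z$ paths in $G_0$. Since $Z$ is free, this certifies that $Y'$ has rank at least $n+2p$ in the tangle matroid of $\TT_0$, hence contains a free $(n+2p)$-subset; dropping the at most $2p$ paths through $B(v)$ finishes. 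The point is that freeness of the target set is inherited \emph{through connectivity in $G_0$}, so no lifting back to $G$ is needed and the ``which side is $v$ on'' issue never arises.
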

\begin{proof}
Let $n'=2(2p+1)(n+2p)$.  
Let us recall that free sets with respect to a tangle of order $\phi$ form independent sets of a matroid of rank $\phi$ (see \cite{rs10}, section 12).
Therefore, there exists a set $Z\subseteq V(G_0)$ of size $\phi$ free with respect to $\TT_0$; clearly, $Z$ is also free with respect to $\TT$.
Note that there exists no separation $(A,B)$ of $G$ of order less than $n'$ such that $v\in V(A)\setminus V(B)$ and $Z\subseteq V(B)$,
since $(A,B)\not\in \TT$ as $v$ is $n'$-free and $(B,A)\not\in \TT$ as $Z$ is free and $n'<\phi$.  Therefore, there exist paths $Q_1, Q_2, \ldots, Q_{n'}\subset G$
joining $v$ to distinct vertices of $Z$ and intersecting only in $v$.  For $1\le i\le n'$, let $q_i\in V(G_0)$ be the vertex of $Q_i$ such
that the subpath of $Q_i$ between $v$ and $q_i$ contains no vertex of $G_0$ other than $q_i$.  Let $Y'=\{q_1,q_2,\ldots, q_{n'}\}$.

Consider a separation $(A,B)$ of $G_0$ such that $Y'\subseteq V(A)$ and $Z\subseteq V(B)$.
Let $S$ be the set obtained from $V(A)\cap V(B)$ by replacing each vertex $v\in V(A)\cap V(B)$ belonging to one of the $p$-vortices $G_1$, \ldots, $G_k$ by the set $X(v)$.
Observe that $S$ separates $Y'$ from $Z$ in $G$: otherwise, there exists a path $P$ from $Y'$ to $Z$ avoiding $S$,
and since $V(A)\cap V(B)\subseteq S$, $P$ contains a subpath inside $G_i$ for some $1\le i\le k$ joining a vertex $p_1\in V(A)\setminus V(B)$
with a vertex $p_2\in V(B)\setminus V(A)$.  However, both paths between $p_1$ and $p_2$ in the cycle in $G_0$ circumscribing $G_i$
must contain a vertex of $V(A)\cap V(B)$, and thus the subpath of $P$ between $p_1$ and $p_2$ in $G_i$ intersects $S$.
This is a contradiction, hence $S$ indeed separates $Y'$ from $Z$ in $G$.
Let $(A',B')$ be the corresponding separation of $G$ with $V(A')\cap V(B')=S$, $Y'\subseteq V(A)$ and $Z\subseteq V(B)$.  By the existence of the paths $Q_1$, \ldots, $Q_{n'}$, the order of $(A',B')$ is at least $n'$,
and thus the order of $(A,B)$ is at least $n'/(2p+1)=2(n+2p)$.

We conclude that there exist $2(n+2p)$ vertex-disjoint paths from $Y'$ to $Z$ in $G_0$.
Let $Y''$ be the set of endvertices of these paths in $Y'$ and $Z'$ the set of their endvertices in $Z$.  Since $Z$ is free with respect to $\TT_0$, $Z'$ is free as well.
Consider a separation $(C,D)\in \TT_0$ with $Y''\subseteq V(C)$, and let $c$ be the order of $(C,D)$.  Because of the paths between $Y''$ and $Z'$,
at least $|Z'|-c$ of the vertices of $Z'$ belong to $V(C)$, and since $Z'$ is free, we have $|Z'\cap V(C)|\le c$.  Therefore, $c\ge |Z'|/2=n+2p$.
This implies that the rank of $Y''$ in the matroid of free sets of $\TT_0$ is at least $n+2p$,
hence $Y''$ contains a subset $Y'''$ of size $n+2p$ that is free in $\TT_0$.

By the choice of $Y'$, there exist paths $P_1, \ldots, P_{n+2p}\subseteq G_1$
joining $v$ with the vertices of $Y'''$ and intersecting only in $v$.  At most $2p$ of these paths intersect $B(v)$, and thus we can choose a set
$Y\subseteq Y'''$ of size $n$ satisfying the conclusions of Lemma~\ref{lemma-vortfree}.
\end{proof}

The following construction enables us to assume that the $p$-vortices are circumscribed, and it will also be useful later in the paper.
Let $G=G_0\cup G_1$, where $G_0$ is embedded in a surface $\Sigma$ and $G_1$ is either a $p$-vortex such that there exists a closed
disk $\Delta\subseteq\Sigma$ intersecting the embedding of $G$ exactly in the boundary vertices of $G_1$ in order,
or a single vertex.  In the latter case, we consider $G_1$ to be a $0$-vortex and let $\Delta\subseteq \Sigma$ be a closed disk intersecting $G_0$ only in $G_1$,
such that the vertex $G_1$ is contained in $\bd(\Delta)$.  Let $\TT$ be a respectful tangle of order $\theta$ in $G_0$, and let $t\ge 2$ be an integer such that
$\theta\ge 8t+11$.  If there exists a simple closed $G_0$-normal curve $c$ intersecting $G_0$ in at most $t$ points such that $\Delta\subseteq \ins(c)$, then
choose $c$ so that $\ins(c)$ is maximal and let $\Delta_0=\ins(c)$; otherwise, let $\Delta_0=\Delta$.
Let $S$ be the set of vertices $w$ such that there exists a simple $G_0$-normal curve joining $w$ to $V(G_0)\cap \bd(\Delta_0)$ intersecting $G_0$
in less than $t$ points.  There exists a component $K$ of $G_0$ such that $(G_0-V(K), K)\in \TT$, and since $\TT$ is respectful,
the embedding of $K$ in $\Sigma$ is $2$-cell.  By Lemma~\ref{lemma-zone}, there exists a $(2t+2)$-zone $\Lambda\subset \Sigma$ such that $S\subset \Lambda$.
Therefore, the face $f$ of $G_0-S$ that contains $\Delta_0$ is a subset of $\Lambda$.  Observe that there exists a cycle $C$ in the boundary of $f$
such that the open disk $\Lambda'\subseteq\Lambda$ bounded by $C$ contains $f$.  Let $G'_0$ be the graph obtained from $G_0$ by clearing the $(2t+2)$-zone $\Lambda'$
and let $G'_1$ be the subgraph of $G$ such that $G=G'_0\cup G'_1$, $G'_0$ and $G'_1$ are edge-disjoint and $V(G'_0)\cap V(G'_1)=V(C)$.
Let $\partial G'_1$ consist of the vertices of $C$ in the order along $C$.  We call $G'_1$ a \emph{$t$-extension of $G_1$.}
Note that $G'_1$ is circumscribed, and that $\partial G'_1\cap V(G_1)=\emptyset$.

\begin{lemma}\label{lemma-vortext}
If $G'_1$ is a $t$-extension of a $p$-vortex, then $G'_1$ is a $(3t+4p)$-vortex.
\end{lemma}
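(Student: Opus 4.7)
Write $H := G_0 \cap \overline{\Lambda'}$, a planar graph embedded in the closed disk $\overline{\Lambda'}$ bounded by $C$. Then $G'_1 = G_1 \cup H$, and the new boundary vertices $u_1,\ldots,u_r$ of $G'_1$ are the vertices of $C$ in cyclic order. The plan is to combine the given standard decomposition $(X_1,\ldots,X_m)$ of the $p$-vortex $G_1$ (with $v_j \in X_j$ and $|X_j\cap X_{j'}|\le p$ for $j<j'$) with a radial decomposition of $H$ obtained by cutting the annular region $\overline{\Lambda'}\setminus \mathrm{int}(\Delta)$ into $r$ strips, one per boundary vertex $u_i$ of $G'_1$.

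Pick reference points $x_0 \in C$ strictly between $u_r$ and $u_1$ and $y_0 \in \bd(\Delta)$ strictly between $v_m$ and $v_1$, and for $i=1,\ldots,r-1$ pick $x_i \in C$ strictly between $u_i$ and $u_{i+1}$. I would then construct $G_0$-normal simple arcs $\gamma_i \subseteq \overline{\Lambda'}$ for $i=0,1,\ldots,r-1$ (with $\gamma_r=\gamma_0$), joining $x_i$ to a point $y_i \in \bd(\Delta)\setminus\{v_1,\ldots,v_m\}$, arranged to be pairwise internally disjoint and so that the cyclic order of the $y_i$'s on $\bd(\Delta)$ agrees with that of the $x_i$'s on $C$. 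Letting $j(i)$ be the index with $y_i$ between $v_{j(i)}$ and $v_{j(i)+1}$ (with $j(0)=0$, $j(r)=m$) and $T_i := V(G_0) \cap \gamma_i$, define for $1 \le i \le r$
\[
Y_i := T_{i-1} \cup T_i \cup \{u_i\} \cup \bigl(V(G_0)\cap \Omega_i\bigr) \cup \bigcup_{j(i-1)<j\le j(i)} X_j,
\]
where $\Omega_i$ is the open strip of $\overline{\Lambda'}$ bounded by $\gamma_{i-1}$, $\gamma_i$, the arc of $C$ through $u_i$, and the arc of $\bd(\Delta)$ through $v_{j(i-1)+1},\ldots,v_{j(i)}$. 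That $(Y_1,\ldots,Y_r)$ is a standard path decomposition of $G'_1$ with $u_i \in Y_i$ is routine from the strip structure and the given decomposition of $G_1$. A short case analysis shows that any $v \in Y_i \cap Y_k$ with $i<k$ either lies on a single cut $\gamma_\ell$, forcing $k=i+1$, or is a vortex vertex that by contiguity of occurrences must belong to $X_{j(i)}\cap X_{j(i)+1}$, contributing at most $p$ such vertices.

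The crux of the argument is thus bounding $|T_i|$. The key input is that $\overline{\Lambda'}$ is contained in the $(2t+2)$-zone $\Lambda$ around some atom $a$ of $G_0$, so every atom of $G_0$ in $\overline{\Lambda}$ lies at tangle distance at most $2t+2$ from $a$; in particular, the faces of $G_0$ containing $x_i$ and $y_i$ are at mutual tangle distance at most $4t+4$. Using the standard correspondence between tangle distance and the minimum number of vertex-crossings of a $G_0$-normal arc in a $2$-cell embedding yields an arc from $x_i$ to $y_i$ crossing $O(t)$ vertices of $G_0$. The remaining technical work is to choose all $r$ arcs simultaneously pairwise internally disjoint and respecting the cyclic order---a standard topological argument inside the disk $\overline{\Lambda'}$---and to route each $\gamma_i$ so as to land in a gap of $\bd(\Delta)$ while skirting the $v_j$'s. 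This is the main obstacle, since one must carefully convert tangle-metric information into geometric control on the cuts and then link $r$ of them coherently around the annulus. Once accomplished, summing the planar contribution $|T_i|\le 3t+3p$ (the $3p$ absorbing the cost of avoiding the vortex-boundary vertices) with the vortex contribution $p$ gives $|Y_i\cap Y_k|\le 3t+4p$ for all $i<k$, as required.
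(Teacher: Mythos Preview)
Your approach differs substantially from the paper's, and the gap you yourself flag is a genuine obstacle.  The paper does \emph{not} build a path decomposition of $G'_1$.  Instead it uses the transaction characterization of a $q$-vortex: for every partition of $\partial G'_1$ into two arcs $A$ and $B$ with common endpoints $a_1,a_2$, it exhibits a set $Z$ of size $<3t+4p$ separating $A$ from $B$ in $G'_1$.  The set $Z$ consists of two $G'_1$-normal curves $c_1,c_2$ from $a_1,a_2$ to vertices $v_1,v_2\in\bd(\Delta_0)$, each crossing fewer than $t$ vertices, together with either $X(v_1)\cup X(v_2)$ (when $\Delta=\Delta_0$) or the whole set $V(G'_1)\cap\bd(\Delta_0)$ (when $\Delta\neq\Delta_0$, in which case this set has size at most $t$).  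The short curves $c_1,c_2$ exist directly from the construction of the $t$-extension: the cycle $C$ lies on the boundary of the face of $G_0-S$ containing $\Delta_0$, where $S$ is exactly the set of vertices reachable from $\bd(\Delta_0)$ by curves with fewer than $t$ crossings.  No detour through the tangle metric is needed.

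Your route requires $r$ pairwise internally disjoint $G_0$-normal arcs $\gamma_0,\ldots,\gamma_{r-1}$ across the annulus, each with a controlled number of crossings, landing in prescribed gaps of $\bd(\Delta)$ in the correct cyclic order.  The construction guarantees a single short curve from each point of $C$ to $\bd(\Delta_0)$, but nothing about a disjoint family: several $\gamma_i$ may be forced through the same few vertices, and rerouting them to be disjoint can increase the crossing counts by an amount depending on $r$, not just on $t$ and $p$.  Your tangle-distance estimate ($\le 4t+4$ between faces in the zone) bounds each arc individually but gives no simultaneity.  The asserted bound $|T_i|\le 3t+3p$, and in particular the role of the ``$3p$ absorbing the cost of avoiding the vortex-boundary vertices'', is not justified.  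You also do not address the case $\Delta\neq\Delta_0$, where the inner boundary of the annulus is $\bd(\Delta_0)$ rather than $\bd(\Delta)$ and the original vortex sits strictly inside.  The paper's transaction argument sidesteps all of this by needing only two curves, chosen afresh for each bipartition.
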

\begin{proof}
Let $\Lambda'$ be the disk in that $G'_1$ is embedded with a $p$-vortex $G_1$, and consider any partition of $\partial G'_1$ to two arcs
$A$ and $B$. Let $a_1$ and $a_2$ be the endpoints of $A$, and for $i\in\{1,2\}$, let $c_i$ be a $G'_1$-normal simple curve drawn in $\Lambda'$
intersecting $G'_1$ in less than $t$ vertices and joining $a_i$ with a vertex $v_i\in \bd(\Delta_0)$.  If $\Delta=\Delta_0$ in the construction
of $G'_1$, then let $Z=((c_1\cup c_2)\cap G'_1)\cup X(v_1)\cup X(v_2)$, otherwise let $Z=((c_1\cup c_2)\cap G'_1)\cup (V(G'_1)\cap \bd(\Delta_0))$.
Observe that $Z$ separates $A$ from $B$ in $G'_1$ and that $|Z|<2t+\max(4p,t)$.  Since the choice of $A$ and $B$ was arbitrary, this shows that
$G'_1$ contains no transaction of order $3t+4p$ as required.
\end{proof}

Finally, we can prove the key lemma.

\begin{lemma}\label{lemma-main}
Let $H$ be a graph with a nice embedding in a surface $\Sigma$ without boundary, and let $p$ and $k$ be integers.  There exist constants $\phi$, $n'$ (where $\phi>n'$) and $a$
with the following property.  Let $G=G_0\cup G_1\cup \ldots \cup G_k$, where $G_0$ is embedded in $\Sigma$ and $G_1$, \ldots, $G_k$ are $p$-vortices
attaching to boundaries of distinct faces of the embedding of $G_0$.  Let $\TT_0$ be a respectful tangle in $G_0$ of order at least $\phi$
and let $\TT$ be the tangle of the same order induced in $G$.  Let $d_0$ be the metric derived from $\TT_0$ and assume that 
the distance according to $d_0$ between every two vortex faces is at least $\phi$.
If $H$ is not a topological minor of $G$, then there are at most $\faces(H,\Sigma)-1$ indices $i\in \{1,\ldots, k\}$ such that at least $a$ vertices of $V(G_i)$
are $n'$-free with respect to $\TT$; and in particular, $\Delta(H)\ge 4$.
\end{lemma}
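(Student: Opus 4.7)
The plan is to prove the contrapositive: if at least $q := \faces(H,\Sigma)$ of the vortices contain $a$ or more vertices that are $n'$-free in $\TT$, then $H$ is a topological minor of $G$. Using Lemma~\ref{lemma-niceh}, I would replace $H$ by a triangle-free graph with a nice embedding in $\Sigma$ (this preserves both $\faces(H,\Sigma)$ and the topological minor relation), and denote its dominating faces by $f^H_1,\ldots,f^H_q$. Each vertex $v$ of $H$ with $\deg_H(v)\ge 4$ lies on exactly one face in $S$; write $\sigma(v)$ for its index. After reindexing assume the vortices with many free vertices are $G_1,\ldots,G_q$, and that the vortex $G_j$ is attached at a face $f^G_j$ of $G_0$ for $1\le j\le q$; the plan is to map each $V_{\ge 4}(H)\cap f^H_j$ into $G_j$ and to route the rest of $H$ through $G_0$.

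I would first apply the $t$-extension construction preceding Lemma~\ref{lemma-vortext} (for $t$ chosen slightly larger than the connectivity parameter $n'$ used below, but much smaller than $\phi$) at each vortex face of $G_0$, replacing $G_j$ by a circumscribed $(3t+4p)$-vortex $G'_j$ and $G_0$ by a graph $G'_0$. Lemma~\ref{lemma-clearing} then furnishes a respectful tangle $\TT'_0$ in $G'_0$ of order only slightly smaller than $\phi$, the pairwise distances between the new vortex faces remain at least $\phi-O(kt)$, and every $n'$-free vertex of $\TT$ inside $G_j$ remains $(n'-O(t))$-free with respect to the tangle $\TT'$ induced on $G=G'_0\cup G'_1\cup\cdots\cup G'_k$. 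Next, for each $1\le j\le q$ and each $v\in V_{\ge 4}(H)$ with $\sigma(v)=j$, I would iteratively apply Lemma~\ref{lemma-vortfree}: pick a fresh free vertex $b_v\in V(G'_j)\setminus V(G'_0)$ disjoint from already-used material, and extract $\deg_H(v)$ internally disjoint $b_v$-to-$V(G'_0)$ paths inside $G'_j$ whose feet constitute a free set $R_v$ on the boundary cycle of $G'_j$. Taking $a$ and the parameter $n$ in Lemma~\ref{lemma-vortfree} sufficiently large in terms of $|V(H)|$, $p$ and $n'$ allows these gadgets to be chosen mutually disjoint except at the $b_v$; the union $R=\bigcup_v R_v$ is free with respect to $\TT'_0$ and the $R_v$ respect the cyclic order of neighbours of $v$ in the nice embedding along $\bd(\Delta_{\sigma(v)})$.

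To complete the subdivision I would invoke Theorem~\ref{thm-gmembed}. Take small pairwise disjoint disks $\Delta_1,\ldots,\Delta_q$ around the vortex faces of $G'_0$, chosen so that $G'_0\cap\Delta_j$ is exactly the free root set assigned to $G'_j$. Let $M$ be the forest embedded in $\Sigma$, supported outside the $\Delta_j$, whose component structure encodes the ``reduced'' graph obtained from the nice embedding of $H$ by deleting every $v\in V_{\ge 4}(H)$ and appending, at the face disk $\Delta_{\sigma(v)}$, a pendant leaf for each neighbour of $v$ matched to the corresponding element of $R_v$; concretely, $M$ is a disjoint union of subdivided edges of $H$, with endpoints that should coincide in the eventual subdivision placed at the same root of $Z=R$, so that the components of $M$ correspond to connected subgraphs of the reduced $H$. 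Since $R$ is free, $R\cap\Delta_j\subseteq\bd(\Delta_j)$, and the inter-disk distances according to the metric of $\TT'_0$ are at least $\phi-O(kt)$, the hypotheses of Theorem~\ref{thm-gmembed} are met for $\phi$ chosen above the threshold dictated by that theorem (for the integers $k=q$ and $z=|R|$). The resulting forest $M'\subseteq G'_0$ glues onto the vortex-gadgets of the previous step to yield a subdivision of $H$ in $G$, a contradiction.

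The main obstacle is the construction of the forest $M$ in Step~3: one must encode the topology of $H$ (which is not itself a forest) in the component structure of $M$ so that distinct ``copies'' of the same branch vertex, represented by several leaves of $M$ mapped to the same element of $Z$, are correctly identified by Theorem~\ref{thm-gmembed}; triangle-freeness and the closed $2$-cell nice embedding make this possible by subdividing each edge of $H$ sufficiently many times and routing it outside the $\Delta_j$. The remaining parameter bookkeeping---choosing $\phi$ to absorb the losses in Lemmas~\ref{lemma-clearing} and~\ref{lemma-vortext}, and $a$ to absorb the iterated applications of Lemma~\ref{lemma-vortfree}---is routine. Finally, the ``in particular $\Delta(H)\ge 4$'' assertion is immediate: if $\Delta(H)<4$ then $q=0$, so the conclusion ``at most $q-1$ indices'' is vacuously violated by any empty configuration, meaning the contrapositive forces $H$ to be a topological minor; and indeed the same construction above, applied with $V_{\ge 4}(H)=\emptyset$ and $Z=\emptyset$, produces $H$ as a topological minor directly inside $G_0$ through Theorem~\ref{thm-gmembed}.
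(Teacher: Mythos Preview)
Your overall strategy matches the paper's: pass to circumscribed vortices via $t$-extensions, use Lemma~\ref{lemma-vortfree} to anchor the high-degree branch vertices inside the rich vortices with free feet on the boundary cycles, and then invoke Theorem~\ref{thm-gmembed} to route the rest of $H$ through the surface part. However, there is a genuine gap in your forest step.

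Theorem~\ref{thm-gmembed} requires $M$ to be a forest, and it only controls which vertices of $Z$ lie in a common component of $M'$. You take $Z=R$ to be the feet on the vortex boundaries and set $k=q$ in that theorem. Your two descriptions of $M$ are inconsistent, and neither works. If $M$ is ``$H$ minus $V_{\ge 4}(H)$ with pendant leaves at $R$'', then $M$ need not be a forest: the subcubic remainder of $H$ away from the dominating faces can contain cycles (nothing in the definition of a nice embedding prevents this). If instead $M$ is ``a disjoint union of subdivided edges of $H$ with endpoints at $Z$'', then every vertex of $H$ would have to be represented in $Z$; but vertices of degree $2$ or $3$ not incident with any dominating face have no counterpart in $R$, so their incident edges cannot be linked up correctly by the conclusion of Theorem~\ref{thm-gmembed}, which only speaks about $Z$.

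The paper closes this gap by introducing a second family of disks. Using the tangle metric, it locates edges $e_1,\ldots,e_{|E(H)|}$ of $G'_0$ that are pairwise far apart and far from all vortex faces, with each $\{$endpoints of $e_j\}$ free. It then re-embeds $H$ so that every edge of $H$ passes through a distinct $e_j$, and forms $M$ by deleting from each edge of $H$ both the segment at $e_j$ and (for edges incident to $R'$) the segment inside the vortex face. This guarantees that $M$ is a genuine subcubic forest: every cycle of $H$ is broken, and every remaining branch point has degree at most $3$. Accordingly the paper applies Theorem~\ref{thm-gmembed} with $k=b+|E(H)|$ disks and $z=4|E(H)|$ roots, not with $k=q$. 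After obtaining $M'$, the deleted edge-segments $e_j$ and the vortex path-systems are pasted back to produce the subdivision of $H$. Your sketch is missing exactly this edge-cutting device; once you add it, the rest of your argument (including the independence/disjointness of the vortex gadgets, which the paper secures cleanly via an interval-graph argument on the path decomposition rather than an iterative choice) goes through along the paper's lines.
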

\begin{proof}
Let $b=\faces(H,\Sigma)$ and let $F=\{f_1,\ldots, f_b\}$ be the set of faces of the nice embedding of $H$ dominating $(\ge\!4)$-vertices.
Let $a=(8p+12)|V(H)|$, let $n'$ be chosen so that Lemma~\ref{lemma-vortfree} applies with $n=\Delta(H)|V(H)|$ and $(4p+6)$-vortices
and so that $n'>8p+13$,
let $\theta$ be the constant given by Theorem~\ref{thm-gmembed} for $\Sigma$, $k=b+|E(H)|$ and $z=4|E(H)|$
and let $\phi=\max(n'+1,(\theta+7)(2+|E(H)|))+26k$.  Suppose on the contrary that $G$ satisfies the assumptions of the lemma, but
each of the $p$-vortices $G_1$, \ldots, $G_b$ contains a set $A_i$ of $n'$-free vertices with $|A_i|\ge a$.  Let $G'_1$, \ldots, $G'_k$ be
$2$-extensions of $G_1$, \ldots, $G_k$ and let $G'_0$ be the graph obtained from $G_0$ by clearing out the corresponding $6$-zones,
so that $G$ is the edge-disjoint union of $G'_0$, $G'_1$, \ldots, $G'_k$.
Let $\TT'_0$ be the respectful tangle in $G'_0$ of order $\phi'=\phi-26k$, such that the distance between any two vortex faces in the associated
metric $d'_0$ is $\phi'$, which exists by Lemma~\ref{lemma-clearing}.
Note that $\TT'_0$ is conformal with $\TT_0$, which in turn is conformal with $\TT$.  Since $\phi'>n'$, all vertices of $A_1\cup\ldots \cup A_b$ are $n'$-free in $G$ with respect
to the tangle of order $\phi'$ induced by $\TT_0'$.  For $1\le i\le b$, note that $G'_i$ is a circumscribed $(4p+6)$-vortex and $A_i$ is disjoint with $\partial G'_i$,
hence for each $v\in A_i$, there exists a set $F_v\subseteq \partial G'_i$ of size $n$ that is free with respect to $\TT'_0$, joined to $v$ by paths as described
in Lemma~\ref{lemma-vortfree}.

For some $i\in\{1,\ldots, b\}$, let us consider an auxiliary graph $L$ with the vertex set $A_i$ such that two vertices $u,v\in A_i$ are adjacent in $L$ iff
the intervals $I_{G'_i}(u)$ and $I_{G'_i}(v)$ intersect.  Since $n'>8p+13$ and $v$ is $n'$-free in $G$, we have $|I_{G'_i}(v)|\ge 2$ for $v\in A_i$.
Consider a clique $Q$ in $L$; there exists $j$ such that $j\in I_{G'_i}(v)$ for all $v\in V(Q)$,
i.e., all vertices of $Q$ belong to the $j$-th bag of the path decomposition of $G'_i$.
Since $|I_{G'_i}(v)|\ge 2$ for $v\in A_i$, each vertex of $Q$ also belongs either to the $(j-1)$-th or to the $(j+1)$-th bag of the path decomposition of $G'_i$.
Since $G'_i$ is a $(4p+6)$-vortex, it follows that $|V(Q)|\le 8p+12$.  Consequently, we have $\omega(L)\le 8p+12$, and since $L$
is an interval graph, it follows that $L$ has an independent set $M$ of size at least $\frac{|A_i|}{8p+12}\ge |V(H)|$.
Note that for distinct $m_1,m_2\in M$, the paths between $m_1$ and $F_{m_1}$ and between $m_2$ and $F_{m_2}$ are vertex-disjoint.

For each vertex $h\in V(H)$ incident with the face $f_i$, choose a distinct vertex $v_h$ in $M$.  Since the free sets are independent sets of a matroid
and each of the sets $F_v$ for $v\in A_i$ has rank at least $n$,
we can also choose a subset $F_h\subseteq F_{v_h}$ of size $\deg(h)$ for each $h\in V(H)$ incident with $f_i$ so that the union of all these sets
is free.  Furthermore, we can choose the vertices $v_h$ so that the order of the sets $F_h$ along $\partial G'_i$ matches the order of the
vertices $h$ along $f_i$.

Let $K$ be the component of $G'_0$ such that $(G'_0-V(K),K)\in \TT'_0$.  If $b>0$, let $e$ be an arbitrary edge of $K$ incident with the cycle circumscribing $G'_1$;
otherwise, let $e$ be an arbitrary edge of $K$.  By (8.12) of \cite{rs11}, there exists another
edge $f\in K$ such that $d'_0(e,f)=\phi'$.  Let $P$ be a path joining $e$ with $f$.  As in (4.3) of \cite{rs12}, we conclude that $P$ contains
a set $R$ of edges $e_1$, \ldots, $e_{|E(H)|}$ such that the distance between each pair of such edges is at least $\theta$, and their distance from $e$
is at least $\theta+2$, but at most $\phi'-\theta-2$ (consequently, their distance from any vortex face is at least $\theta$) and each two vertices
joined by one of these edges form a free set with respect to $\TT'_0$.  Let $R'$ be the set of vertices of $H$ incident with the faces in $F$.
By applying a homeomorphism, the embedding of $H$ can be altered to an embedding
$H'$ such that each edge of $H'$ contains an edge of $R$ as a subset and for every $h\in R'$ incident with $f_i$ for some $i\in \{1,\ldots, b\}$,
$h$ is drawn in the vortex face of $G'_i$ and each edge incident with $h$ contains a vertex of $F_h$.  Let $M$ be the subcubic forest obtained from the
embedding of $H'$ by removing from each edge the segment corresponding to $R$ and by removing the vertices of $R'$ together with the parts of edges
incident with them drawn inside the vortex faces; new vertices are added to $M$ at the ends of the removed segments, let their set be denoted by $N$.
By Theorem~\ref{thm-gmembed}, we conclude that $G'_0$ contains as a subgraph a forest $M'$ such that the vertices in $N$ are in the same
component of $M'$ if and only if they are in the same component of $M$.  We can assume that every leaf of $M'$ belongs to $N$, and thus $M'$ is subcubic.
Then,
$M'$ together with $R$ and together with the disjoint paths in the vortices joining the vertices $\{v_h:h\in R'\}$ with the corresponding vertices of $N$
form a topological minor of $H$ in $G$.  This is a contradiction.
\end{proof}

The following claim is proved in the same way:
\begin{lemma}\label{lemma-mainsim}
Let $H$ be a graph embedded in a surface $\Sigma$ without boundary, and let $p$ and $k$ be integers.  There exist constants $\phi$ and $n$ (with $n<\phi$)
with the following property.  Let $G=G_0\cup G_1\cup \ldots \cup G_k$, where $G_0$ is embedded in $\Sigma$ and $G_1$, \ldots, $G_k$ are $p$-vortices
attaching to boundaries of distinct faces of the embedding of $G_0$.  Let $\TT_0$ be a respectful tangle in $G_0$ of order at least $\phi$
and let $\TT$ be the tangle of the same order induced in $G$.  Let $d_0$ be the metric derived from $\TT_0$ and assume that 
the distance according $d_0$ between any two vortex faces is at least $\phi$.
If $H$ is not a topological minor of $G$, then there are at most $|V(H)|-1$ indices $i\in\{1,\ldots, k\}$ such that at least one vertex of $V(G_i)$
is $n$-free with respect to $\TT$.
\end{lemma}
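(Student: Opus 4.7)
The plan is to follow the proof of Lemma~\ref{lemma-main} with only cosmetic modifications, treating every vertex of $H$ as ``special'' rather than only the vertices incident with the dominating set $F$. Each branch vertex of the desired topological copy of $H$ will be routed into its own vortex, so $|V(H)|$ vortices suffice. First I would establish a strengthening of Lemma~\ref{lemma-niceh} by the very same subdivide-and-add-edge construction: start from the given embedding of $H$, add chords to split non-disk faces and multi-face incidences, then subdivide edges, but now require that \emph{every} branch vertex of $H$ (not only $(\ge\!4)$-vertices) lies on a unique face, and that the resulting $|V(H)|$ faces are pairwise distinct. This produces a graph $H'$ containing $H$ as a topological minor, with a closed $2$-cell embedding in $\Sigma$ and a family of distinct special faces $\{f_h : h \in V(H)\}$.

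Next, choose $n$ large enough that Lemma~\ref{lemma-vortfree}, applied with target size $\Delta(H')$ and to $(4p+6)$-vortices, produces a free set on the vortex boundary together with internally disjoint routing paths avoiding $B(v)$, and with $n>8p+13$; choose $\phi$ large enough that $\phi-26k$ exceeds the threshold in Theorem~\ref{thm-gmembed} for $\Sigma$ with $|V(H)|+|E(H')|$ disks and $z=4|E(H')|$, and sufficiently large that along a path in the dominant tangle component one can extract $|E(H')|$ free edges pairwise at distance at least $\theta$ in the tangle metric. Assume for contradiction that $|V(H)|$ of the vortices, say $G_1,\ldots,G_{|V(H)|}$, each contain an $n$-free vertex $v_i$. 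Replace each such $G_i$ by a $2$-extension $G'_i$, which is a circumscribed $(4p+6)$-vortex by Lemma~\ref{lemma-vortext}, let $G'_0$ be obtained from $G_0$ by clearing the associated $6$-zones, and equip $G'_0$ with the respectful tangle $\TT'_0$ of order $\phi-26k$ from Lemma~\ref{lemma-clearing}, conformal with $\TT_0$; each $v_i$ remains $n$-free in the corresponding induced tangle in $G$.

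Assign the branch vertices $h\in V(H)$ bijectively to the chosen vortices $G'_{i(h)}$, and apply Lemma~\ref{lemma-vortfree} at $v_{i(h)}$ to extract a free set $F_h\subseteq\partial G'_{i(h)}$ of size $\deg_{H'}(h)$ together with $\deg_{H'}(h)$ internally disjoint paths in $G'_{i(h)}$ from $v_{i(h)}$ to $F_h$ that avoid $B(v_{i(h)})$, refined via the matroid property exactly as in Lemma~\ref{lemma-main} so that $\bigcup_h F_h$ is free in $\TT'_0$; by rotating the cyclic order along $\partial G'_{i(h)}$, align $F_h$ with the cyclic order of edges incident with $h$ in the embedding of $H'$ around $f_h$. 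Then, repeating the remainder of the proof of Lemma~\ref{lemma-main} almost verbatim, pick an edge of the dominant tangle component, use (8.12) of \cite{rs11} and (4.3) of \cite{rs12} to extract a set $R$ of $|E(H')|$ edges along a connecting path, pairwise at distance at least $\theta$ and at distance at least $\theta$ from every vortex face; alter the embedding of $H'$ by a homeomorphism so that each edge passes through an edge of $R$ and terminates at the appropriate $F_h$; delete the segments of edges inside small disks around $R$ and inside the vortex faces to produce a subcubic forest $M$; invoke Theorem~\ref{thm-gmembed} to realize $M$ by a forest $M'\subseteq G'_0$; and concatenate $M'$ with the edges of $R$ and the vortex-internal paths to obtain a topological copy of $H'$, and hence of $H$, in $G$, contradicting the hypothesis.

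The main obstacle, as in Lemma~\ref{lemma-main}, is verifying that the cyclic-order matching between each $\partial G'_{i(h)}$ and the boundary of $f_h$ can always be arranged (which reduces to re-indexing the starting point of the boundary cycle), and that the metric hypotheses of Theorem~\ref{thm-gmembed} survive the clearing of zones (controlled by Lemma~\ref{lemma-clearing}). The genuine simplification compared to Lemma~\ref{lemma-main} is that only one $n$-free vertex per vortex is required, so the interval-graph argument used there to extract $|V(H)|$ independent $n'$-free vertices inside a single vortex is no longer needed.
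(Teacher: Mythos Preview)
Your proposal is correct and matches the paper's own treatment: the paper simply states that Lemma~\ref{lemma-mainsim} ``is proved in the same way'' as Lemma~\ref{lemma-main}, and you have accurately spelled out the needed adaptations---one distinguished face per vertex of $H$ in the nice embedding, one $n$-free vertex per vortex, and hence no need for the interval-graph independent-set argument. The remaining steps (the $2$-extensions via Lemma~\ref{lemma-vortext}, Lemma~\ref{lemma-vortfree} for the routing into each vortex, the edge set $R$ along a long path, and Theorem~\ref{thm-gmembed} for the forest) are exactly as in the proof of Lemma~\ref{lemma-main}.
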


From the proof of Lemma 6.12 of \cite{gmarx}, we can extract the following lemma regarding the existence of separators that
cut off all high-degree vertices.
\begin{lemma}\label{lemma-cutbig}
There exists a function $f(n)\in O(n^3)$ with the following property.
Let $G$ be a graph with a tangle $\TT$ of order $\theta$ and let $n$ be an integer with $f(n)<\theta$.
Let $X\subseteq V(G)$ be a set such no vertex in $V(G)\setminus X$ is $n$-free.
There exists a collection $\LL$ of vertex-disjoint subsets of $V(G)$ such that
\begin{itemize}
\item if $L_1,L_2\in \LL$ are distinct, then $N^G(L_1)\cap L_2=\emptyset$,
\item for every $L\in \LL$, the subgraph of $G$ induced by $L$ is connected and satisfies $|N^G(L)|\le f(n)$ and $S^G(L)\in \TT$, and
\item if $G'$ is the graph obtained from $G-\bigcup_{L\in \LL} L$ by adding cliques with vertex set $N^G(L)$ for each $L\in \LL$ (without creating parallel edges),
then every $v\in V(G')\setminus X$ satisfies $\deg_{G'}(v)\le nf(n)$.
\end{itemize}
\end{lemma}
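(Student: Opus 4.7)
The plan is to construct $\LL$ greedily, exploiting the hypothesis that every $v\in V(G)\setminus X$ is not $n$-free: for each such $v$ the set $W(\{v\})$ is well defined and $S^G(W(\{v\}))\in\TT$ has order less than $n$.

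I would start with $\LL_0=\emptyset$ and at step $j$ form the quotient graph $G^{(j)}$ obtained from $G-\bigcup_{L\in\LL_j}L$ by adding a clique on $N^G(L)$ for each $L\in\LL_j$. If every $v\in V(G^{(j)})\setminus X$ satisfies $\deg_{G^{(j)}}(v)\le nf(n)$, the process terminates; otherwise, I pick a high-degree witness $v$ and set $W_v:=W(\{v\})$. The new element $L$ is then defined as the connected component of the induced subgraph $G[W_v\setminus\bigcup_{L'\in\LL_j}(L'\cup N^G(L'))]$ containing $v$; the witness $v$ is chosen so that $v$ itself lies outside $\bigcup_{L'\in\LL_j}N^G(L')$, which is possible because the clique contributions of the earlier $L'$'s to $\deg_{G^{(j)}}(v)$ are controlled by the inductive bound $|N^G(L')|\le f(n)$, so if every high-degree witness in $V(G^{(j)})\setminus X$ were already swallowed by some $N^G(L')$, the remaining degree could not exceed $nf(n)$.

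With this definition, the three structural conditions are immediate: vertex-disjointness and $N^G(L_1)\cap L_2=\emptyset$ hold since $L$ is defined to avoid $L'\cup N^G(L')$ for every earlier $L'$; connectedness of $G[L]$ is by construction; and $S^G(L)\in\TT$ follows from $L\subseteq W_v$ together with Lemma~5.7 of \cite{gmarx} and the tangle axioms, as shrinking the small side of a $\TT$-separation preserves membership in $\TT$. The crucial bound $|N^G(L)|\le f(n)$ comes from the decomposition $N^G(L)\subseteq N^G(W_v)\cup\bigcup_{L'\in\LL_j:\, L'\cap W_v\ne\emptyset}N^G(L')$: the first term contributes fewer than $n$ vertices, and an uncrossing argument (using submodularity of the order of tangle separations together with the minimality built into the definition of $W(\cdot)$) shows that only $O(n^2)$ earlier $L'$ can genuinely interact with a given $W_v$, yielding $f(n)=O(n^3)$.

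The main obstacle will be the uncrossing and counting step that controls the total contribution to $N^G(L)$ from earlier elements of $\LL_j$: one has to show that interactions cannot cascade into a super-cubic blow-up despite the iterative construction. This requires combining the submodularity of the tangle separator order with the minimality of $W(\{v\})$ to rule out repeated overlap of a fresh $W_v$ with too many prior small-side sets, and to justify the claim that enlarging $L$ to include the intersecting $L'$ (if one preferred that variant) would still yield a $\TT$-separation of polynomially bounded order. Termination itself is immediate, since every iteration strictly decreases $|V(G^{(j)})\setminus X|$ by removing at least the witness $v$, so the loop ends in at most $|V(G)|$ iterations.
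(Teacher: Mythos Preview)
The paper does not actually prove this lemma; it merely states that the result is extracted from the proof of Lemma~6.12 in Grohe--Marx~\cite{gmarx}. So your proposal already attempts more than the paper does, and the greedy scheme driven by the sets $W(\{v\})$ together with submodularity is indeed the machinery Grohe and Marx build in their Sections~5--6.

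That said, you explicitly flag the crux and then leave it open, and in your specific iterative construction the gap is real. Your new $L$ is carved out of $W_v$ by deleting $\bigcup_{L'}(L'\cup N^G(L'))$, so $N^G(L)$ may absorb fragments of \emph{every} earlier $N^G(L')$ that meets $W_v$; nothing you have written prevents these from accumulating across iterations, and the bare assertion that ``only $O(n^2)$ earlier $L'$ can genuinely interact with a given $W_v$'' has no argument behind it. The same problem undermines your auxiliary claim that the high-degree witness $v$ can always be chosen outside $\bigcup_{L'}N^G(L')$: a single vertex may lie in $N^G(L')$ for many different $L'$, so membership in one such neighborhood does not by itself bound $\deg_{G^{(j)}}(v)$. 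The way Grohe--Marx avoid this cascade is not by iterative carving but by exploiting the near-laminar structure of the family $\{W(\{v\})\}$ directly: their uncrossing lemmas (around Lemma~5.8) force overlapping $W$-sets into a containment relation, so one may work with inclusion-maximal sets and obtain disjointness up front, with the polynomial bound on $|N^G(L)|$ coming from controlled merging rather than from counting iterative interactions. If you want to rescue the iterative version, you must import that laminarity to show that each fresh $W_v$ meets only boundedly many live earlier pieces; without it, the $O(n^3)$ bound is unsupported.
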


The following technical lemma is standard (we include its proof for completeness).
\begin{lemma}\label{lemma-mkdist}
For every $t,n>0$ and every non-decreasing positive function $f$, there exists $T$ such that the following holds.  Let $Z$ and $U$ be sets of points of a metric space with metric $d$,
such that $|Z|\le n$ and for every $u\in U$ there exists $z\in Z$ with $d(u,z)<t$.  Then, there exists a subset $Z'\subseteq Z$ and a number $t'\le T$ such that
\begin{itemize}
\item for every $u\in U$, there exists $z\in Z'$ with $d(u,z)<t'$, and
\item for distinct $z_1,z_2\in Z'$, $d(z_1,z_2)\ge f(t')$.
\end{itemize}
Furthermore, if the elements of a set $Z''\subseteq Z$ are at distance at least $T$ from each other, then we can choose $Z'$ so that $Z''\subseteq Z'$.
\end{lemma}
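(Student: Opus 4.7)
The plan is to iterate a simple merge-nearby-centers procedure and take $T$ to be the radius produced after at most $n-1$ merges. Concretely, I would define a sequence $t_0 := t$ and $t_{i+1} := t_i + f(t_i)$, and set $T := t_n$. Starting from $Z_0 := Z$, which covers $U$ at radius $t_0$ by hypothesis, at step $i$ I scan $Z_i$ for a pair $z_1, z_2$ with $d(z_1,z_2) < f(t_i)$; if one exists I delete one of the two, preferring to delete an element not in $Z''$, to obtain $Z_{i+1}$, and I continue. If no such pair exists, I output $Z' := Z_i$ and $t' := t_i$.

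Three short verifications would then drive the proof. First, coverage is preserved at the inflated radius: if $z_2$ is discarded in favor of $z_1$ then any $u \in U$ previously within $t_i$ of $z_2$ satisfies $d(u,z_1) \le d(u,z_2) + d(z_2,z_1) < t_i + f(t_i) = t_{i+1}$, so $Z_{i+1}$ covers $U$ at radius $t_{i+1}$. Second, each step strictly shrinks $|Z_i|$, so the procedure halts after at most $n-1$ steps, giving $t' \le t_{n-1} \le T$. Third, the termination condition is exactly that $d(z,z') \ge f(t')$ for all distinct $z,z' \in Z'$.

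For the furthermore clause I would use monotonicity of $f$: since $T = t_{n-1} + f(t_{n-1}) \ge f(t_{n-1}) \ge f(t_i)$ for every $i \le n-1$, a set $Z''$ whose pairwise distances are at least $T$ contains no pair ever closer than $f(t_i)$ at any stage of the procedure. Consequently the preference rule (delete the non-$Z''$ element whenever the conflicting pair meets $Z''$) never forces the algorithm to discard a point of $Z''$, so $Z'' \subseteq Z'$ at the end.

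The argument is routine, with essentially no obstacle; the only subtle point is choosing $T$ large enough to simultaneously upper-bound the produced covering radius $t'$ and dominate the separation threshold $f(t')$, and the recursion $t_{i+1} = t_i + f(t_i)$ is tailored so that $T = t_n$ does both because each $t_{i+1}$ visibly exceeds $f(t_i)$.
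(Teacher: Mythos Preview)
Your proposal is correct and follows essentially the same approach as the paper's proof: define the recursion $t_{i+1}=t_i+f(t_i)$, iteratively delete a member of a too-close pair (preferring non-$Z''$ elements), and stop when the current set is $f(t_i)$-separated. The only cosmetic differences are that the paper takes $T=t_{n-1}$ rather than your $T=t_n$ (one step smaller, still sufficient since at most $n-1$ deletions occur), and that you spell out the triangle-inequality coverage check and the monotonicity justification for preserving $Z''$ a bit more explicitly than the paper does.
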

\begin{proof}
Let $t_0=t$ and for $1\le i\le n-1$, let $t_i=t_{i-1}+f(t_{i-1})$; and set $T=t_{n-1}$.  We construct a sequence of sets $Z=Z_0\supset Z_1\supset \ldots \supset Z_{n'}$ with $n'<n$
such that for every $u\in U$ and $i\le n'$, there exists $z\in Z_i$ with $d(u,z)<t_i$, as follows: suppose that we already found $Z_i$.  If $d(z_1,z_2)\ge f(t_i)$
for every distinct $z_1,z_2\in Z_i$, then set $n'=i$ and stop.  Otherwise, there exist distinct $z_1,z_2\in Z_i$ such that $d(z_1,z_2)<f(t_i)$ and $z_2\not\in Z''$;
in this case, set $Z_{i+1}=Z_i\setminus \{z_2\}$.  Clearly, the set $Z'=Z_{n'}$ has the required properties.
\end{proof}

Combining these results, we can obtain the following description of plane graphs avoiding a given topological minor.
\begin{lemma}\label{lemma-patches}
For every graph $H$ and every $n_0\ge 0$, there exist constants $n$, $D$, $m$ and $a$ such that if $G$ is a graph embedded in a disk $\Delta$
with $|G\cap \bd(\Delta)|\le n_0$ and $H$ is not a topological minor of $G$, then $G$ is a subgraph of
an $(n,\faces(H,\Sigma(0,0,0)),D,m, a)$-patch.
\end{lemma}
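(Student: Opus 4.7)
If $H$ does not embed in $\Sigma(0,0,0)$, then $\faces(H,\Sigma(0,0,0))=\infty$ and the high-degree condition in the definition of an $(n_0,\infty,D,m,a)$-basic graph is vacuous, so $G$ itself is such a basic graph (with $D=m=a=0$) and we are done.  We therefore assume $H$ is planar, write $t:=\faces(H,\Sigma(0,0,0))$, and use Lemma~\ref{lemma-niceh} to replace $H$ by a triangle-free graph $H'$ containing $H$ as a topological minor, with the same value of $\faces(\cdot,\Sigma(0,0,0))$ and with a nice embedding in the sphere; $G$ avoids $H'$ topologically as well.

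The plan is to prove the claim by induction on $|V(G)|$, choosing the constants $n,D,m,a$ large enough to dominate all auxiliary quantities produced below.  Cap $\Delta$ with a face so that $G$ is embedded in $\Sigma(0,0,0)$.  If $G$ has bounded tree-width (relative to the constants fixed below), the statement is nearly trivial: $G$ is a pasting of a bounded number of pieces of bounded size, each of which is basic after moving all its vertices into $A$.  Otherwise $G$ carries a respectful tangle $\TT$ of order at least a threshold $\phi$ chosen large enough for Lemma~\ref{lemma-main} applied to $H'$ in the sphere (with $p=0$ and a suitable bound $k$ on the number of vortices) and for Lemma~\ref{lemma-cutbig} applied with the parameter $n'$ coming from Lemma~\ref{lemma-main}.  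Invoking Lemma~\ref{lemma-cutbig} with $X:=V(G)\cap\bd(\Delta)$ yields a family $\LL$ of disjoint subgraphs with small neighborhoods whose removal leaves every vertex outside $X$ of degree at most $n'f(n')$.  Because $\TT$ is respectful, each $L\in\LL$ can be placed inside a closed sub-disk of $\Delta$ whose boundary is a short cycle (adding a bounded number of chord edges on $N^G(L)$ if needed, which is permitted since we only require $G$ to be a \emph{subgraph} of a patch).  The induction hypothesis then exhibits each such $L$ as a patch, and we paste these patches onto the trunk $G^*:=G-\bigcup_{L\in\LL} L$, whose surviving high-degree vertices outside a bounded exceptional set are all $n'$-free.

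It remains to witness that $G^*$ itself is basic, and here the nontrivial content of the lemma enters.  I would place a trivial $0$-vortex at each $n'$-free high-degree vertex of $G^*$, cluster nearby ones via Lemma~\ref{lemma-mkdist} into a bounded number of far-apart groups, and enlarge each group by the $t$-extension construction (Lemmas~\ref{lemma-vortext} and~\ref{lemma-clearing}) into a circumscribed $p$-vortex with $p$ bounded by a constant depending on the clustering distance, while preserving a respectful tangle of still high order in which the vortex faces are mutually far apart.  Lemma~\ref{lemma-main} applied to this configuration and to the nice embedding of $H'$ then forces at most $t-1$ of the clusters to be ``heavy'' (to contain at least $a$ $n'$-free vertices); the centers of the heavy clusters form the desired set $S$ with $|S|<t$, the vertices of the light clusters together with $X$ are absorbed into $A$, and the standard path decomposition of each heavy vortex combined with the zone structure produces the short $G^*$-normal curves from each remaining high-degree vertex to its cluster center, certifying that $G^*$ is $(n,t,D,m,a)$-basic.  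The main obstacle is twofold: to convert the abstract tangle separators from Lemma~\ref{lemma-cutbig} into face cycles of the planar embedding so that the recursive pastings are well-defined, and to choreograph the clustering, $t$-extensions, and tangle clearings so that the surviving tangle keeps its order large enough for Lemma~\ref{lemma-main} to apply with constants independent of $|V(G)|$.
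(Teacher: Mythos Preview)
Your outline follows the same architecture as the paper's proof (induction, construct a respectful tangle in the capped sphere, use Lemma~\ref{lemma-cutbig} to peel off sub-disks and recurse, and use Lemma~\ref{lemma-main} to locate the remaining free vertices near fewer than $t$ centers), but two genuine gaps need to be fixed.

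First, your invocation of Lemma~\ref{lemma-cutbig} with $X:=V(G)\cap\bd(\Delta)$ does not meet the hypothesis of that lemma: it requires that no vertex of $V(G)\setminus X$ be $n'$-free, yet immediately afterwards you rely on $n'$-free vertices in the interior of the trunk. The correct choice (as in the paper) is to take $X=U$, the set of $n'$-free vertices; the boundary vertices are handled separately by throwing $A_2:=G\cap\bd(\Delta)$ into the exceptional set $A$ of the basic piece.

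Second, and more seriously, your clustering step is circular as written. Lemma~\ref{lemma-mkdist} takes as input a set $Z$ with $|Z|\le n$; it does not by itself produce a bounded number of groups from an a priori unbounded collection of $0$-vortices, and Lemma~\ref{lemma-main} likewise needs the number $k$ of vortices bounded in advance. The paper obtains this bound by first choosing a maximal set $Z_0\subseteq U$ of mutually $\phi_1$-far free vertices and applying Lemma~\ref{lemma-mainsim} (the ``one-free-vertex-per-vortex'' version, with $p=0$) to conclude $|Z_0|<|V(H)|$; only then is Lemma~\ref{lemma-mkdist} applied to $Z_0$ to get $Z$ and the scale $r$, after which the $r$-extensions become at most $|V(H)|$ many $3r$-vortices and Lemma~\ref{lemma-main} applies. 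Without this preliminary use of Lemma~\ref{lemma-mainsim} your argument has no bound on $k$.

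Two smaller points where the paper is more careful than your sketch: the paper strengthens the induction hypothesis to sub-disks $\Delta'\subset\Delta$ with $|G\cap\bd(\Delta')|\le n$ (with $n$ substantially larger than $n_0$), which is what makes the recursion on the $\Delta_L$'s legitimate; and rather than invoking a generic ``large tree-width $\Rightarrow$ tangle'' step, the paper pads $\bd(\Delta)$ to a larger curve $\bd(\Delta_2)$ with exactly $3\theta-2$ vertices and runs a balanced-cut dichotomy, so that when no balanced cut exists the resulting slope $\ins$ has the property that every $\Delta_L=\ins(\bd(\Delta_L))$ contains fewer than $\theta$ vertices of $\bd(\Delta_2)$. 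This is what keeps the boundary of each recursive piece below $n$ and lets one redirect the $G$-normal curves from $u\in U$ to $S$ so that they stay inside $\Delta$ at the cost of at most $n$ extra intersections.
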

\begin{proof}
The claim is obvious when $\faces(H,\Sigma(0,0,0))$ is infinite (i.e., $H$ is not planar), with $D=m=a=0$ and $n=n_0$.
Hence, assume that $b=\faces(H,\Sigma(0,0,0))$ is finite.
Furthermore, by Lemma~\ref{lemma-niceh}, we can assume that $H$ has a nice embedding in the sphere with a set of $b$ faces
dominating $(\ge\!4)$-vertices.   Let $\phi_1$ and $n_1$ be the constants given by Lemma~\ref{lemma-mainsim}
applied for $H$ with $p=0$ and $k=|V(H)|$.  Let $\phi_2$, $n_2$ and $a_2$ be non-decreasing functions of one variable
such that $\phi_2(p)$, $n_2(p)$ and $a_2(p)$ are greater or equal to the constants given by Lemma~\ref{lemma-main} applied for
graphs embedded in the sphere with at most $|V(H)|$ $p$-vortices and for the graph $H$.
Let $T$ be the constant of Lemma~\ref{lemma-mkdist} applied with $t=\phi_1$, $n=|V(H)|$ and the function $f_1(r)=\phi_2(3r)+|V(H)|(8r+10)$.
Let $f_2$ be the function of Lemma~\ref{lemma-cutbig}.
Let $a'=a_2(3T)$ and $n'=\max(n_1,n_2(3T),T+1)$.
Let $\theta=\max(\phi_1, f_1(T), n_0, f_2(n'))$, $n=3\theta-2$, $D=n'f_2(n')+n$, $m=T+n$ and $a=\max(4\theta-2,a'|V(H)|+n)$.

We prove a slightly stronger claim: if $\Delta'\subset \Delta$ is a disk with a $G$-normal boundary such that $|G\cap \bd(\Delta')|\le n$
and $G'$ is the subgraph of $G$ embedded in $\Delta'$, then $G'$ is a subgraph of an $(n,b,D,m,a)$-patch.
We proceed by induction and assume that the claim holds whenever $G'\neq G$.
If $G$ has less than $3\theta-2$ vertices, then $G$ is $(n,b,D,m,a)$-basic, since $a> 3\theta-2$.
Therefore, assume that $G$ has at least $3\theta-2$ vertices.
Let $\Sigma$ be the sphere
obtained by identifying the boundary of $\Delta$ with the boundary of a disjoint disk $\Delta_1$.
Let $\Delta_2\subset \Sigma$ be a closed disk such that $\Delta_1\subseteq \Delta_2$,
$\bd(\Delta_2)$ is $G$-normal, $\bd(\Delta)\cap G\subseteq \bd(\Delta_2)\cap G$, the interior of $\Delta_2$ contains no vertices of $G$ and $|\bd(\Delta_2)\cap G|=3\theta-2$.

Suppose now that there exists a simple closed $G$-normal curve $c\subseteq \Sigma$ with $|G\cap c|<\theta$
such that each of the two closed disks in $\Sigma$ bounded by $c$ contains at least $\theta$ vertices of 
$G\cap \bd(\Delta_2)$.  Consequently, each of the disks contains at most $|G\cap \bd(\Delta_2)|-\theta+|G\cap c|<3\theta-2$
vertices of $G\cap (\bd(\Delta_2)\cup c)$.  Hence, $\bd(\Delta_2)\cup (c\cap \Delta)$ cuts $\Delta$ into disks such that
the boundary of each of these disks intersects $G$ in less than $3\theta-2\le n$ vertices. By induction, the subgraphs of $G$ embedded in these disks are subgraphs
of $(n,b,D,m,a)$-patches.  Since $|G\cap (c\cup \bd(\Delta_2))|<4\theta-2\le a$, a supergraph of $G$ is obtained by pasting these patches
to a graph with at most $a$ vertices, which is $(n,b,D,m,a)$-basic.

Therefore, we can assume that no such curve $c$ exists.  Then, $G$ contains a respectful tangle $\TT$ of order $\theta$
defined by its slope $\ins$ as follows: if $q$ is a closed simple $G$-normal curve intersecting $G$ in less than $\theta$
points, then let $\ins(q)$ be the closed disk bounded by $q$ such that $\ins(q)$ contains less than $\theta$ vertices
of $G\cap \bd(\Delta_2)$.  There is only one such disk, since $|G\cap \bd(\Delta_2)|>2\theta-2$.
We need to verify that $\ins$ satisfies the slope axioms of~\cite{rs11}.  The axiom (i) is trivial and the axiom (ii) holds
since $|G\cap \bd(\Delta_2)|>3\theta-3$. Hence, $\ins$ defines a respectful tangle by (6.1) and (6.5) of \cite{rs11}
(the slope $\ins$ is clearly even).

Let $U$ be the set of vertices of $G$ that are
$n'$-free in $\TT$.  Let $Z_0$ be the maximal subset of $U$ such that $d(z_1,z_2)\ge \phi_1$ for
all distinct $z_1,z_2\in Z_0$, where $d$ is the metric derived from $\TT$.  Since $H$ is not a topological subgraph of $G$,
Lemma~\ref{lemma-mainsim} implies that $|Z_0|<|V(H)|$.  By Lemma~\ref{lemma-mkdist}, there exist $Z\subseteq Z_0$ and $r\le T$
such that $d(z_1,z_2)\ge f_1(r)$ for distinct $z_1,z_2\in Z$ and for each $u\in U$, there exists $z\in Z$ with $d(u,z)<r$.
Note that since each $u\in U$ is $n'$-free and $n'>r$, every closed walk $W$ of length less than $2r$ in the
radial drawing of $G$ such that $u\in \ins(W)$ satisfies $u\in V(W)$.  Therefore, for each $u\in U$, there exists $z\in Z$
and a simple $G$-normal curve $c$ with ends $u$ and $z$ intersecting $G$ in less than $r$ vertices.
By Lemma~\ref{lemma-vortext}, we can express $G$ as $G_0\cup G_1\cup \ldots \cup G_k$, where $k=|Z|$,
\begin{itemize}
\item $G_0$ is embedded in the sphere with a respectful tangle $\TT_0$ of order at least $\phi_2(3r)$ conformal with $\TT$,
\item for $1\le i\le k$, $G_i$ is a $3r$-vortex, and
\item the distance between their vortex faces is at least $\phi_2(3r)$, according to the metric $d_0$ derived from $\TT_0$.
\end{itemize}
Since $G$ does not contain $H$ as a topological minor, Lemma~\ref{lemma-main} implies that each of $G_b$, $G_{b+1}$, \ldots, $G_k$ contains at most $a'$ vertices of $U$.
Let $A_1=U\cap (V(G_b)\cup V(G_{b+1})\cup \ldots\cup V(G_k))$ and let $S=Z\cap (V(G_1)\cup \ldots\cup V(G_{b-1}))$.
We have $|S|=b-1$.
Note that for each $u\in U\setminus A_1$, there exists $s\in S$ and a $G$-normal curve $c$ joining $u$ with $s$ intersecting $G$ in less than $T$ vertices.
Furthermore, we can alter $c$ so that it lies within $\Delta$, while adding at most $n$ new intersections with $G$.
Therefore, for each $u\in U\setminus A_1$, there exists $s\in S$ and a $G$-normal curve $c\subseteq \Delta$ joining $u$ with $s$
and intersecting $G$ in less than $T+n=m$ vertices.
Let $A_2=G\cap \bd(\Delta)$.

Let $\LL$ be the set from Lemma~\ref{lemma-cutbig}, applied with $n=n'$ and $X=U$.  Note that we can assume that for any $L\in \LL$,
if $c$ is a simple closed $G$-normal curve with $c\cap G\subseteq N^G(L)$, then $\ins(c)\cap V(G)\subseteq N^G[L]$.  Consequently,
for each $L\in \LL$, there exists a closed disk $\Delta_L\subset \Sigma$ with $G$-normal boundary such that $\Delta_L=\ins(\bd(\Delta_L))$,
$V(G)\cap \Delta_L=N^G[L]$, and $\bd(\Delta_L)\cap G=N^G(L)$.  In particular, $|\bd(\Delta_L)\cap G|\le f_2(n')$ for $L\in\LL$.

Let $\DD$ be the set of disks obtained from $\{\Delta_L:L\in\LL\}$ by intersecting with $\Delta$ (some of the disks $\Delta_L$ can be split
into several disks belonging to $\DD$).
Since $\Delta_L=\ins(\bd(\Delta_L))$ for every $L\in \LL$, each disk $\Delta_L$ contains less then $\theta$ vertices of $\bd(\Delta_2)\cap G\supseteq \bd(\Delta)\cap G$,
and thus $|\bd(\Pi)\cap G|< f_2(n')+\theta < n$ for every $\Pi\in \DD$.
By induction, the graph $G\cap \Pi$ is an $(n,b,D,m,a)$-patch.
Furthermore, consider the graph $G'$ obtained from $G$ by, for each $\Pi\in \DD$, removing vertices and edges contained in the interior of $\Pi$
and adding $\bd(\Pi)$ as a cycle.  Each of the added edges is either incident with a vertex of $A_2$ or belongs to a clique with
vertex set $N^G(L)$ for some $L\in \LL$.  Therefore, the degree of each vertex of $G'$ that does not belong to $U\cup A_2$ is at most
$n'f_2(n')+n=D$.

Let $S'$ be the set obtained from $S$ by replacing each vertex $s\in S\setminus V(G')$ by a vertex incident with the face of $G'$ inside that $s$
is drawn according to the embedding of $G$.
The sets $(A_1\cup A_2)\cap V(G')$ and $S'$ witness that $G'$ is $(n,b,D,m,a)$-basic.  Note that a supergraph of $G$ is obtained from $G'$ by
pasting $(n,b,D,m,a)$-patches.  Therefore, $G$ is a subgraph of an $(n,b,D,m,a)$-patch.
\end{proof}

\section{The structure}\label{sec-struct}

We start with a key ingredient for a local form of the structure theorem.

\begin{lemma}\label{lemma-sublocal}
For every graph $H$, integers $k_0$ and $p_0$, and a surface $\Sigma$ in that $H$ can be embedded, there exist integers $a$, $k$, $p$, $D$, $m$, $n$ and $\phi$ with the following property.
Let $G$ be a graph with a tangle $\TT$ of order at least $\phi$ and let $S$ be a $\TT$-central maximal segregation of $G$ of type $(p_0,k_0)$
with an arrangement in $\Sigma$.  Furthermore, assume that $T(S)$ contains a respectful tangle $\TT_0$ of order at least $\phi$ conformal with $\TT$,
and if $f_1$ and $f_2$ are vortex faces of $T(S)$ corresponding to distinct $p$-vortices and $d_0$ is the metric
defined by $\TT_0$, then $d_0(f_1,f_2)\ge\phi$.

If $G$ does not contain $H$ as a topological minor, then there exists a $\TT$-respecting star decomposition $(T,\beta)$ of $G$ with center $s$ and a set $A\subseteq \beta(s)$ of size at most $a$
such that either the maximum degree of $\tau(s)-A$ is at most $D$, or
$\faces(H,\hat{\Sigma})\ge 2$ and there exists an outgrowth $Q$ of a graph embedded in $\Sigma$
by at most $k$ vortices of depth at most $p$ satisfying
\begin{itemize}
\item all vertices $v\in V(Q)$ with $\deg_{\tau(s)-A}(v)>D$ belong to one of the vortices,
\item less than $\faces(H,\hat{\Sigma})$ of the vortices contain such a vertex, and
\item $\tau(s)-A$ is an $(n,\faces(H,\Sigma(0,0,0)),D,m,a)$-expansion of $Q$.
\end{itemize}
\end{lemma}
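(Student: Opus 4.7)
The plan is as follows. First, I apply Lemma~\ref{lemma-niceh} to replace $H$ by a triangle-free graph with a nice embedding in $\Sigma$; the maximality of $S$ ensures $T(S)$ is a minor of $G$ and that triangle-free topological minors of $T(S)\cup G_1\cup\ldots\cup G_{k_0}$ lift to $G$, so the replacement preserves the avoidance hypothesis. Let $b=\faces(H,\Sigma)=\faces(H,\hat{\Sigma})$. Following the strategy used in the proof of Lemma~\ref{lemma-patches}, I locate the $n'$-free vertices of $\tau_0:=T(S)\cup G_1\cup\ldots\cup G_{k_0}$ (for $n'$ chosen large enough for Lemma~\ref{lemma-main}), pick a maximal subset at pairwise tangle-distance at least $\phi_1$, apply Lemma~\ref{lemma-mainsim} to bound its size by $|V(H)|-1$, refine it via Lemma~\ref{lemma-mkdist}, and then use Lemma~\ref{lemma-vortext} (together with Lemma~\ref{lemma-clearing} for tangle bookkeeping) to either enlarge an existing $p_0$-vortex or introduce a fresh vortex at each cluster. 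This yields an extended structure $\tilde{G}_0\cup\tilde{G}_1\cup\ldots\cup\tilde{G}_{\tilde{k}}$ in which every $n'$-free vertex of $\tau_0$ lies in some vortex, with $\tilde{k}$ and the new vortex depth bounded by functions of $H$, $k_0$, and $p_0$.

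I then invoke Lemma~\ref{lemma-main} on the extended structure: at most $b-1$ of the $\tilde{G}_i$ contain $a_1$ or more $n'$-free vertices, which I call the \emph{major} vortices; the rest are \emph{minor} and contain fewer than $a_1$ such vertices. I construct the $\TT$-respecting star decomposition whose tips are the cells of $S$ not absorbed into any extended vortex together with the cut-off sets produced by Lemma~\ref{lemma-cutbig} applied to $G$ with $X$ equal to the set of all $n'$-free vertices. Let $A$ be the $n'$-free vertices in minor vortices (bounded in total by $(\tilde{k}-b+1)a_1$) together with constantly many bookkeeping exceptions; Lemma~\ref{lemma-cutbig} then guarantees that every vertex of $\tau(s)-A$ outside the major vortices has degree at most $D$. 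When $b=1$, no major vortices exist and $\tau(s)-A$ has maximum degree at most $D$, giving the first bullet of the conclusion. When $b\ge 2$, I set $Q_0:=\tilde{G}_0$ and take the vortices of $Q$ to be the $\tilde{G}_i$ (absorbing the Lemma~\ref{lemma-cutbig} clique-additions that fall inside a vortex into the vortex structure at the cost of enlarging its depth). The clique edges added by Lemma~\ref{lemma-cutbig} inside each planar face of $\tilde{G}_0$ form a plane graph still excluding $H$ as a topological minor; Lemma~\ref{lemma-patches} therefore produces an $(n,\faces(H,\Sigma(0,0,0)),D,m,a)$-patch in each such face, so that $\tau(s)-A$ is the desired expansion of $Q$, giving the second bullet.

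The main obstacle is the bookkeeping. One must simultaneously maintain conformality among the tangles $\TT$ on $G$, $\TT_0$ on $T(S)$, and the induced tangles on the intermediate minors and on $\tilde{G}_0$ after each application of Lemma~\ref{lemma-clearing}; verify that the respectful-tangle distances between vortex faces remain at least $\phi$ after every clearing and extension, so that the hypothesis of Lemma~\ref{lemma-main} still applies to the final extended structure; and choose the constants $a,k,p,D,m,n,\phi$ uniformly via a careful cascade from the constants produced by Lemma~\ref{lemma-main}, Lemma~\ref{lemma-mainsim}, Lemma~\ref{lemma-mkdist}, Lemma~\ref{lemma-cutbig}, and Lemma~\ref{lemma-patches}. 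A secondary technical point is to argue that the clique edges added inside planar faces by Lemma~\ref{lemma-cutbig} really define a plane graph that avoids $H$ as a topological minor, so that Lemma~\ref{lemma-patches} applies; this holds because each such clique is realizable in $G$ through the corresponding cut-off subgraph $L$, so any topological $H$ in the clique-augmented face would lift to one in $G$.
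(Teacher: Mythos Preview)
Your overall strategy matches the paper's: cluster the $n'$-free vertices of the skeleton $G'=T(S)\cup G_1\cup\ldots\cup G_{k'_0}$ using Lemmas~\ref{lemma-mainsim} and~\ref{lemma-mkdist}, extend the clusters (together with the original vortex faces) into bounded-depth vortices via Lemma~\ref{lemma-vortext}, invoke Lemma~\ref{lemma-main} to bound the number of vortices carrying many free vertices by $b-1$, and control the non-free high-degree vertices with Lemma~\ref{lemma-cutbig}. However, there is a genuine gap in how you build the star decomposition and verify the expansion property when $b\ge 2$.

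You declare the cut-off sets $L\in\LL$ produced by Lemma~\ref{lemma-cutbig} to be tips. This forces $\tau(s)$ to contain a clique on each $N^G(L)$, and since $|N^G(L)|$ may be as large as $f(n')$, such cliques are in general non-planar. Then the planar part of $\tau(s)-A$ cannot be written as a subgraph of $Q_0$ with disk-embedded patches pasted into its faces, and the expansion claim breaks. Your proposed repair---that ``each such clique is realizable in $G$ through the corresponding cut-off subgraph $L$'', so a topological $H$ using the clique edges would lift---is not valid: the subgraph $G[N^G[L]]$ is only guaranteed to be connected, not to contain a subdivision of $K_{|N^G(L)|}$, so there is no reason internally disjoint paths realizing all the clique edges exist in $G$.

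The paper handles this differently in two respects. First, Lemma~\ref{lemma-cutbig} is applied to $G'$ (the skeleton plus the $p_0$-vortices), not to $G$; the cells of $S$ are folded back in only afterwards. Second, and crucially, the sets $L\in\LL$ that lie entirely in the embedded part $G''_0$ are \emph{not} made into tips. They remain in $\beta(s)$, so that the planar portion of $\tau(s)$ is simply a subgraph of $T(S)$, with no cliques added beyond the triangles already present in $T(S)$ from cell boundaries. Lemma~\ref{lemma-patches} is then applied directly to the subgraph of $T(S)$ drawn inside each disk $\Delta_L$, with $H$ replaced by the once-subdivided (hence triangle-free) graph $H'$; a topological $H'$ in $T(S)$ would lift to $G$ by the maximality of $S$, so these disk subgraphs genuinely avoid $H'$ and are certified as patches. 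The embedded graph $Q_0=G'''_0$ is obtained from $G''_0$ by deleting each such $L$ and adding only the boundary \emph{cycle} of $\Delta_L$, not a clique, which keeps it planar. The only tips are the cells $s\in S$ with $\partial s\subseteq V(G''_0)$ (contributing cliques of size at most $3$, already edges of $T(S)$) together with carefully constructed bounded-boundary pieces inside the extended vortices.
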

\begin{proof}
Let $H_{\Sigma}$ be the nice embedding in $\Sigma$ derived from $H$ using Lemma~\ref{lemma-niceh}.
Let $k_1=|V(H_\Sigma)|$. Let $k=k_0+k_1$.  Let $\phi_2$ and $n_2$ be the constants
of Lemma~\ref{lemma-mainsim} applied with $0$-vortices, $k_1$, $\Sigma$ and $H_\Sigma$.
Let $\phi_3$, $a_3$ and $n_3$ be non-decreasing functions of one variable such that
$\phi_3(p)$, $a_3(p)$ and $n_3(p)$ are greater or equal to the constants of Lemma~\ref{lemma-main} applied to graphs with
at most $k$ $p$-vortices, for surface $\Sigma$ and the forbidden topological minor $H_\Sigma$.
Let $f_5$ be the function of Lemma~\ref{lemma-cutbig} and for an integer $r\ge 0$, let $n_r=\max(n_2,n_3(3r+4p_0),r)$.
Let $T_3$ be the constant of Lemma~\ref{lemma-mkdist} applied with $t=\phi_2$, $n=k$ and the function $f_3(r)=\phi_3(3r+4p_0)+(8r+4f_5(n_r)+20)k$.
Let $\phi_4=f_3(T_3)$, $a_4=a_3(3T_3+4p_0)$, $n_4=n_3(3T_3+4p_0)$ and $n'=\max(n_2,n_4,T_3)$.
Let $p_5=3f_5(n')+12T_3+16p_0$.
Let $n_5$, $D_5$, $m$ and $a_5$ be the constants of Lemma~\ref{lemma-patches} applied to the graph $H'$ obtained from $H$ by subdividing each
edge once, with $n_0=f_5(n')$.
Let $n=\max(f_5(n'), n_5)$, $a=\max(ka_4,a_5)$, $n''=n'+a$, $p=2p_5f_5(n')+1$ and $D=\max(pn''f_5(n''), D_5)$.
Let $\phi=\max(\phi_2,\phi_4+p)$.

Let $G_1$, \ldots, $G_{k'_0}$ be the $p_0$-vortices of $S$, where $k'_0\le k_0$.
Let $G_0=T(S)$, and let $G'=G_0\cup G_1\cup\ldots\cup G_{k'_0}$.  
Let $\TT'$ be the tangle of order $\phi$ induced by $\TT_0$ in $G'$.
Since $H_{\Sigma}$ is triangle-free and $H$ is a topological minor of $H_{\Sigma}$, we conclude that $G'$ does not
contain $H_{\Sigma}$ as a topological minor.  Let $U_2$ be the set of all vertices of $G'$ that are $n_2$-free in $\TT'$.
Let $Z$ be a maximal subset of $U_2\cap V(G_0)$ such that the distances among the vertices of $Z$ according to $d_0$
are at least $\phi_2$.  By Lemma~\ref{lemma-mainsim} applied to $G_0$, we have $|Z|<k_1$.  Let $Z_1$ be the set consisting of $Z$ and of the set $F$ of vortex faces of $G_0$.
By Lemma~\ref{lemma-mkdist}, there exists $Z_2\subseteq Z_1$ and $r\le T_3$ such that each element of $U_2\cap V(G_0)$ is at distance less than $r$ from $Z_2$,
while the distances among the elements of $Z_2$ are at least $f_3(r)$, and $F\subseteq Z_2$.  Let $k'=|Z_2|$ and $p_r=3r+4p_0$. Let us recall that $n_r=\max(n_2,n_3(p_r),r)$.
Let $G'=G_0'\cup G_1'\cup \ldots\cup G'_{k'}$, where $\{G'_i:1\le i\le k'\}$ are $p_r$-vortices obtained as $r$-extensions of the elements of
$Z_2$ and $G'_0$ is obtained from $G_0$ by clearing the corresponding $(2r+2)$-zones.
Let $\TT'_0$ be the tangle of order at least 
$\phi'=\phi-(8r+10)k\ge \phi_4+p-(8r+10)k\ge f_3(r)+p-(8r+10)k=\phi_3(p_r)+(4f_5(n')+10)k+p$ in $G'_0$ (conformal with $\TT_0$) and $d'_0$ the associated metric as given by Lemma~\ref{lemma-clearing}.
Note that the distance between any pair of vortex faces in $G'_0$ according to $d'_0$ is at least $f_3(r)-(8r+10)k=\phi_3(p_r)+(4f_5(n_r)+10)k$.

Let $U_4\subseteq U_2$ be the set of vertices of $G'$ that are $n_r$-free in the tangle induced by $\TT'_0$ in $G'$ (or, by conformality, equivalently in $\TT'$).
Consider a vertex $u\in U_4\cap V(G_0)$ and let $z\in Z_2$ be the element such that $d_0(u,z)<r$, i.e., there exists a closed walk $W$ in the radial drawing of $G_0$
of length less than $2r$ such that $u,v\in \ins(W)$.  Since $n_r\ge r$ and $u$ is $n_r$-free, we have $u\in V(W)$.  We conclude that $W$ is either a path joining
$u$ with $z$, or a path to a cycle surrounding $z$.  In both cases, $u$ belongs to the $r$-extension of $z$.
Consequently, $U_4\subseteq V(G'_1)\cup \ldots\cup V(G'_{k'})$.

Let $b=\faces(H,\Sigma)$.  Since $H_{\Sigma}$ is not a topological minor of $G'$, by Lemma~\ref{lemma-main} we can assume that $\Delta(H)\ge 4$ (and thus $b>0$) and
that $|V(U_4)\cap V(G'_i)|\le a_4$ for $b\le i\le k'$.  Let $A=\bigcup_{b\le i\le k'} V(U_4)\cap V(G'_i)$ and note that $|A|\le ka_4$.

If $b=1$, then every $n'$-free vertex of $G'$ belongs to $A$.  Note that no vertex of $V(G)\setminus V(G')$ is $4$-free with respect to $\TT$,
since the segregation $S$ is $\TT$-central.  Suppose that $G$ contains a vertex $v\not\in A$ that is $n'$-free with respect to $\TT$.
Since $n'\ge 4$, we have $v\in V(G')$, and since $v\not\in A$, $v$ is not $n'$-free with respect to $\TT'$.  Hence, there exists
a separation $(I',J')\in \TT'$ of $G'$ of order less than $n'$ with $v\in V(I')\setminus V(J')$.
Let $I$ be the subgraph of $G$ consisting of $I'-E(G_0)$ and of all cells $s\in S$ with $\partial s\subseteq V(I')$,
and let $J$ be the subgraph consisting of $J'-E(G_0)$ and of all cells $s\in S$ with $\partial s\not\subseteq V(I')$.
The boundary of each cell $s\in S$ induces a clique in $G_0$, and thus $\partial s\subseteq V(I')$ or $\partial s\subseteq V(J')$.
Therefore, $(I,J)$ is a separation of $G$ of the same order as $(I',J')$, and since $\TT'$ is conformal with $\TT$,
we have $(I,J)\in \TT$.  This contradicts the assumption that $v$ is $n'$-free with respect to $\TT$.
We conclude that all $n'$-free vertices of $G$ belong to $A$.
Consequently, all $n'$-free vertices of $G$ belong to $A$, and $|A|\le ka_4\le a$.
As in Lemma~6.12 of \cite{gmarx}, we conclude
that $G$ has a $\TT$-respecting star decomposition with center $s$ such that the maximum degree of $\tau(s)-A$ is at most $D$.

Therefore, assume that $b\ge 2$.  Let $\LL$ be the set obtained from Lemma~\ref{lemma-cutbig} applied to $G'$ with $n=n_r$ and $X=U_4$.
Since the order of the respectful tangle $\TT'_0$ is greater than $f_5(n_r)$,
we can assume that for every $L\in \LL$ and for every simple closed $G'_0$-normal curve $c$ that intersects
$G'_0$ in a subset of $N^{G'}(L)$ and no boundary vertex of $G'_1$, \ldots, $G'_{k'}$ appears in the interior of $\ins(c)$,
we have $\ins(c)\cap V(G'_0)\subseteq N^{G'}[L]$.
Similarly, if $c$ is a simple closed $G'_0$-normal curve $c$ intersecting $G'_0$ in a subset of $N^{G'}(L)$
such that $\ins(c)$ contains the vortex face of $G'_i$ for some $i\in\{1,\ldots, k'\}$, then
we can assume that $(\ins(c)\cap V(G'_0))\cup V(G'_i)\subseteq N^{G'}[L]$. Consequently, each $L\in \LL$ satisfies one of the following:
\begin{itemize}
\item[(i)] there exists a disk $\Delta^L\subset \Sigma$ with $G'_0$-normal boundary such that $\Delta^L=\ins(\bd(\Delta^L))$, 
$G'_0\cap \bd(\Delta^L)=N^{G'}(L)$, and $N^{G'}[L]=V(G'_0)\cap \Delta^L$,
\item[(ii)] there exists exactly one $i\in\{1,\ldots, k'\}$ such that $V(G'_i)\subseteq L$, there exists a disk $\Delta^L\subset \Sigma$ such that
$\bd(\Delta^L)$ is $G'_0$-normal, $\Delta^L=\ins(\bd(\Delta^L))$,
$\Delta^L$ contains the vortex face of $G'_i$, $G'_0\cap \bd(\Delta^L)=N^{G'}(L)$ and $V(G'_i\cup (G'_0\cap \Delta^L))=N^{G'}[L]$, or
\item[(iii)] there exists exactly one $i\in\{1,\ldots, k'\}$ such that $V(G'_i)\cap L\neq\emptyset$ and there exist disks $\Delta^L_1, \ldots, \Delta^L_q\subset \Sigma$
 with $G'_0$-normal boundaries, each containing at least one vertex of $\partial G'_i$, such that
$\Delta^L_j=\ins(\bd(\Delta^L_j))$ and $G'_0\cap \bd(\Delta^L_j)\subseteq N^{G'}(L)$ for $1\le j\le q$
and $\bigcup_{j=1}^q(V(G'_0)\cap \Delta^L_j)=N^{G'_0}[L]$.
It is possible that $q=0$, in which case $N^{G'}[L]\subseteq V(G'_i)\setminus \partial G'_i$.
\end{itemize}
Furthermore, the interiors of all the disks are pairwise disjoint.

For $1\le i\le k'$, let us define an open disk $\Lambda_i\subset \Sigma$ as follows.  If there exists $L\in\LL$ of type (ii) such that
$\Delta^L$ contains the vortex face of $G'_i$, then let $\Lambda_i$ be the interior of $\Delta^L$.  Otherwise, let $\Pi_i$ be
the union of the vortex face $h_i$ of $G'_i$ with interiors of all disks $\Delta^L_j$ for all $L\in \LL$ of type (iii) that intersect
$G'_i$.  Note that all atoms in $\Pi_i$ are at distance at most $f_5(n_r)$ from $h_i$ according to the
metric $d'_0$, and thus by Lemma~\ref{lemma-zone}, there exists an $(f_5(n_r)+2)$-zone $\Lambda'_i\supseteq\Pi_i$ around $h_i$.
In this case, we let $\Lambda_i\subseteq \Lambda'_i$ be an open disk such that $\Pi_i\subseteq\Lambda_i$, the boundary of $\Lambda_i$
is $G'_0$-normal and the part of $G'_0$ contained in the closure of $\Lambda_i$ is
as small as possible.  Note that we can choose $\Lambda_i$ so that for every $L\in\LL$ of type (i), the interior of $\Delta_L$
is either a subset of $\Lambda_i$ or disjoint with $\Lambda_i$.  See Figure~\ref{fig-extvort} for an illustration.

\begin{figure}
\includegraphics{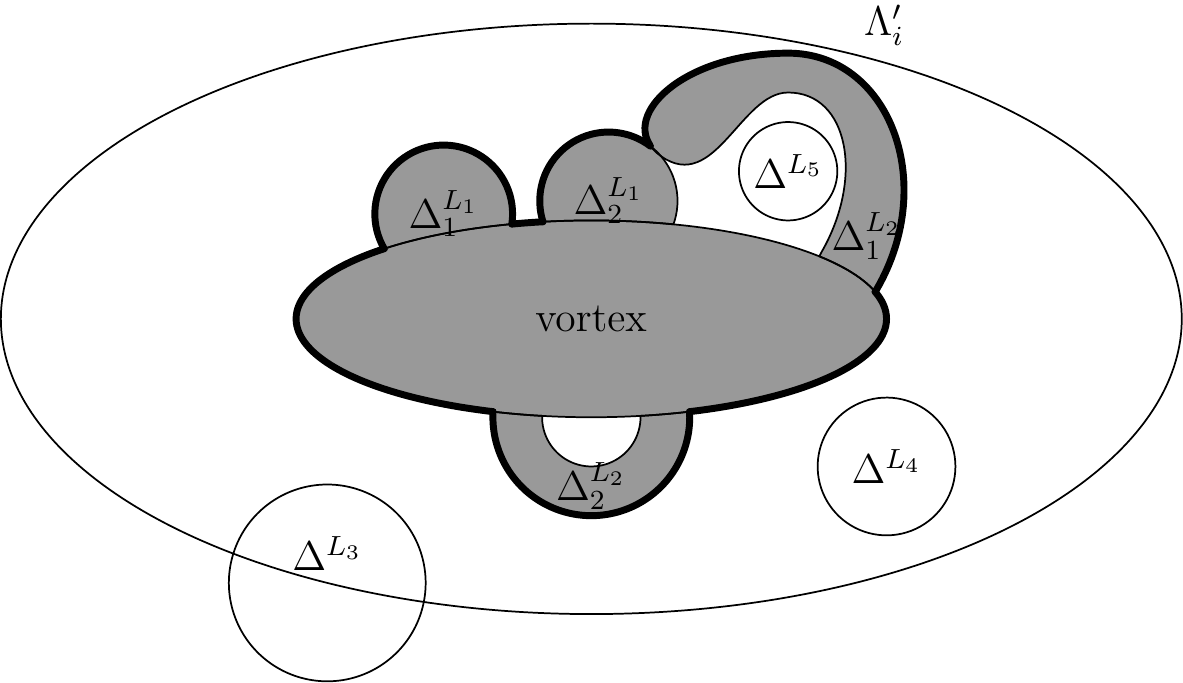}
\caption{Adding disks to a vortex. The gray area is the set $\Pi_i$. The boundary of $\Lambda_i$ is drawn by a thick line.}\label{fig-extvort}
\end{figure}

For $1\le i\le k'$, let $G''_i$ consist of $G'_i$ and of the subgraph of $G'_0$ contained in the closure of $\Lambda_i$,
with $\partial G''_i=\bd(\Lambda_i)\cap G'_0$.
Let $G''_0$ be the subgraph of $G'_0$ obtained by removing all vertices contained in $\bigcup_{i=1}^{k'}\Lambda_i$.
Note that $\Lambda_i$ is contained inside an $(f_5(n_r)+2)$-zone around the vortex face of $G'_i$ for $1\le i\le k'$.
By Lemma~\ref{lemma-clearing}, the subgraph of $G'_0$ obtained by clearing these zones contains a tangle $\TT_4$ of order $\phi''\ge\phi'-(4f_5(n_r)+10)k>p$
and the distances among the zones are at least $\gamma_r-(4f_5(n_r)+10)k>0$.  Consequently, the graphs $G''_1$, \ldots, $G''_{k'}$ are vertex-disjoint
and analogously to Lemma~\ref{lemma-vortext}, they are (not necessarily circumscribed) $p_5$-vortices.  Let
$\TT''_0$ be the tangle order $\phi''$ in $G''_0$ induced by $\TT_4$.
Note that $G'=G''_0\cup G''_1\cup\ldots\cup G''_{k'}$ and that for each $L\in \LL$, there exists $i\in\{0,1,\ldots, k'\}$ such that $N^{G'}[L]\subseteq V(G''_i)$.

Let us recall that $H'$ is a subdivision of $H$.
The graph $H'$ is not a topological minor of $G''_0$, since $H$ is a topological minor of $H'$ and $H'$ is triangle-free.
Consequently, if $L\in \LL$ satisfies $N^{G'}[L]\subseteq V(G''_0)$, then Lemma~\ref{lemma-patches}
implies that the subgraph of $G''_0$ drawn in $\Delta_L$ is a subgraph of an $(n,\faces(H,\Sigma(0,0,0)),D,m,a)$-patch.
Let $G'''_0$ be the graph obtained from $G''_0$ by, for each $L\in \LL$ satisfying $N^{G'}[L]\subseteq V(G''_0)$,
removing $L$ and adding edges along the boundary of $\Delta_L$ if they are not already present, so that $G''_0$ is
an $(n,\faces(H,\Sigma(0,0,0)),D,m,a)$-expansion of $G'''_0$.  Note that the maximum degree of $G'''_0$ is at most $n'f_5(n')$. 
Let $T_0=\{s\in S:|\partial s|\le 3, \partial s\subseteq V(G''_0)\}$.

Consider now some $i\in\{1,\ldots, k'\}$.  For each $L\in \LL$ with $N^{G'}[L]\subseteq V(G''_i)$, let
$$L'=L\cup \bigcup_{s\in S, |\partial s|\le 3, \partial s\cap L\neq \emptyset} (V(s)\setminus \partial s).$$
Suppose that $v$ is a vertex of $N^G(L')$, and let $u$ be its neighbor in $L'$.  If $u$ belongs to $V(s)\setminus \partial s$
for some cell $s\in S$ with $\partial s\cap L\neq \emptyset$, then $v$ belongs to $\partial s$,
and since $\partial s$ induces a clique in $G'$, we have $v\in N^{G'}(L)$.  If $u$ belongs to $L$, then
note that $v\not\in V(s)\setminus \partial s$ for every cell $s\in S$, as otherwise we would have $v\in L'$.
Consequently, since $u$ and $v$ are adjacent in $G$, we either have $u,v\in\partial s$ for some cell $s\in S$,
or there exists $j\in \{1,\ldots, k'_0\}$ such that $G_j\subset G''_i$ and $uv\in E(G_j)$.  In both cases, we have $v\in N^{G'}(L)$.  We conclude that $N^G(L')\subseteq N^{G'}(L)$.

Let
\begin{eqnarray*}
\LL'_i&=&\{L':L\in \LL, N^{G'}[L]\subseteq V(G''_i)\}\cup \\
&&\{V(s)\setminus \partial s: |\partial s|\le 3,\partial s\subseteq V(G''_i), (\forall L\in\LL)\,\partial s\cap L=\emptyset\}.
\end{eqnarray*}
Note that for every $K\in \LL'_i$, we have $|N^G(K)|\le f_5(n')$.
Let $G^*_i$ be the graph obtained from $G''_i\cup \bigcup_{s\in S, |\partial s|\le 3, \partial s\subseteq V(G''_i)} s$ by removing edges that do not belong to $G$,
i.e., the edges added in the construction of $T(S)$.  Note that $G^*_i$ is a $p_5$-vortex with $\partial G^*_i=\partial G''_i$.
Let $G^{**}_i$ be the graph obtained from $G^*_i-\bigcup_{K\in\LL'_i} K$
by adding cliques with vertex set $N^G(K)$ for each $K\in \LL'_i$.  For a set $I\subseteq V(G^*_i)$, let us define $I'$
as the set obtained from $I$ by replacing each $v\in I\setminus V(G^{**}_i)$ by the vertices of $N^G(K)$, where $K\in \LL'_i$
is the set satisfying $v\in K$.  Observe that if $I$ separates two vertices of
$\partial G^*_i$ in $G^*_i$, then $I'$ separates them in $G^{**}_i$, and that $|I'|\le f_5(n')|I|$.  Consequently, $G^{**}_i$ is a $p_5f_5(n')$-vortex.
Let $X'_1$, \ldots, $X'_o$ be the path decomposition of $G^{**}_i$ obtained using Lemma~\ref{lemma-pvortex}.
Let $X_1$, \ldots, $X_o$ be the corresponding path decomposition of $G^*_i$ obtained by adding each $K\in \LL'_i$ to a bag
containing $N^G(K)$ (such a bag exists, since $N^G(K)$ induces a clique in $G^{**}_i$).  Let $v_1$, \ldots, $v_o$ be
the vertices of $\partial G^*_i$ in order, so that $v_j\in X_j$ for $1\le j\le o$.  Let $Y_j=\{v_j\}\cup (X_j\cap (X_{j-1}\cup X_{j+1}))$ (where
$X_0=X_{o+1}=\emptyset$) and let $Y=Y_1\cup \ldots\cup Y_o$.  We say that two edges of $G^*_i$ are equivalent if there exists a path joining them without internal
vertices in $Y$.  Let $T_i$ consist of all graphs $t\subseteq G^*_i$ such that $E(t)$ is a maximal set of pairwise equivalent edges,
$V(t)$ is the set of vertices incident with $E(t)$ and $\partial t=V(t)\cap Y$.
Let $G'''_i$ be the graph with the vertex set $Y$ in that two vertices are adjacent iff they belong to $\partial t$ for some $t\in T_i$.
Let us remark that if two vertices $u,v\in Y$ are adjacent in $G^*_i$, then there exists $t\in T_i$ with
$V(t)=\partial t=\{u,v\}$ and $E(t)=\{uv\}$, and thus $uv\in E(G'''_i)$.

Consider a vertex $v\in V(G'''_i)$ that does not belong to $A$, for some $i\ge b$.
Let $uv$ be an edge of $E(G'''_i)$. We assign an edge $e(uv)\in E(G^{**}_i)$ to $uv$ in the
following way.  If $uv\in E(G^{**}_i)$, then we set $e(uv)=uv$.
Otherwise, there exists $t\in T_i$ with $u,v\in \partial t$ and an edge $vw\in E(t)$
with $w\in V(t)\setminus \partial t$.  If $uw\in E(G^{**}_i)$, then we set $e(uv)=uw$.
Otherwise, there exists $K\in\LL'_i$ with $w\in K$.
Let $P\subseteq t$ be a path starting with the edge $vw$ and ending in $u$ and with no internal
vertices in $\partial t$, which exists by the definition of the elements of $T_i$.  Let $w'$ be the vertex of $P$ not belonging to $K\cup\{v\}$ which
is nearest to $v$.  Note that $w'\in N^G(K)$, and thus $vw'$ is an edge of $G^{**}_i$.
Since $uv$ is not an edge of $G^{**}_i$, we have $w'\in V(t)\setminus\partial t$.
We set $e(uv)=vw'$.

Let $zv$ be an edge of $G^{**}_i$.  If $zv\in E(G'''_i)$, then there exists exactly
one edge of $G'''_i$ assigned to $zv$ by $e$, namely the edge $zv$ itself.
If $zv\not\in E(G'''_i)$, then there exists unique $t\in T_i$ with $z\in V(t)\setminus\partial t$,
and if $e(uv)=zv$, then $u$ belongs to $\partial t$.  Therefore, $e(g)=zv$ for at most
$p$ edges $g\in E(G'''_i)$.  Therefore, $\deg_{G'''_i}(v)\le p\deg_{G^{**}_i}(v)\le pn'f_5(n')\le D$,
where the inequality $\deg_{G^{**}_i}(v)\le n'f_5(n')$ follows by Lemma~\ref{lemma-cutbig}.

Let $T^*=T_0\cup\ldots\cup T_{k'}$.  Let $G'''=G''_0\cup G'''_1\cup\ldots\cup G'''_{k'}$.
Let $(T,\beta)$ be the star decomposition of $G$ with center $s$ such that
$\beta(s)=V(G''')$ and with tips $V(t)$ for all $t\in T^*$.
Since $\TT''_0$ is a respectful tangle in $G''_0$ of order greater than $p$ and it is conformal with $\TT$,
for every $t\in T^*$ we have $(t,r)\in \TT$ for the separation $(t,r)$ of $G$ with $V(t\cap r)=\partial t$.
Therefore, $(T,\beta)$ is $\TT$-respecting.  Observe that $\tau(s)=G'''$.
Consequently, $\tau(s)-A$ is an $(n,\faces(H,\Sigma(0,0,0)),D,m,a)$-expansion of
the graph $Q=(G'''_0\cup G'''_1\cup\ldots\cup G'''_{k'})-A$.
Furthermore, all vertices of $Q$ of degree greater than $D$ belong to one of $G'''_1$, \ldots, $G'''_{b-1}$, as required.
\end{proof}

We are now ready to prove a local form of the structure theorem.

\begin{lemma}\label{lemma-local}
For every graph $H$ and integer $\ell$, there exist integers $a$, $D$, $k$, $m$, $n$, $p$ and $\theta$ with the following property.
If $G$ is a graph that does not contain $H$ as a topological minor and $G$ has a tangle $\TT$ of order at least $\theta$ that does
not control a $K_{\ell}$-minor, then 
there exists a $\TT$-respecting star decomposition $(T,\beta)$ of $G$ with center $s$ and a set $A\subseteq \beta(s)$ of size at most $a$
satisfying one of the following:
\begin{itemize}
\item the maximum degree of $\tau(s)-A$ is at most $D$; or,
\item for a surface $\Sigma$ in that $H$ cannot be embedded, $\tau(s)-A$ is an outgrowth
of a graph embedded in $\Sigma$ by at most $k$ vortices of depth at most $p$; or,
\item for a surface $\Sigma$ such that $H$ can be embedded in $\Sigma$
and $\faces(H,\hat{\Sigma})\ge 2$, there exists an outgrowth $Q$ of a graph embedded in $\Sigma$
by at most $k$ vortices of depth at most $p$ such that 
\begin{itemize}
\item all vertices $v\in V(Q)$ with $\deg_{\tau(s)-A}(v)>D$ belong to one of the vortices,
\item less than $\faces(H,\hat{\Sigma})$ of the vortices contain such a vertex, and
\item $\tau(s)-A$ is an $(n,\faces(H,\Sigma(0,0,0)),D,m,a)$-expansion of $Q$.
\end{itemize}
\end{itemize}
\end{lemma}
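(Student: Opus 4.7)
The plan is to apply Theorem~\ref{thm-gm} with the forbidden graph taken to be $K_\ell$, and then feed the resulting segregation into either a direct construction or Lemma~\ref{lemma-sublocal}, depending on whether $H$ embeds in the surface that arises. Since $\TT$ does not control a $K_\ell$-minor and has sufficient order, Theorem~\ref{thm-gm} (invoked with a non-decreasing function $\phi_*$ chosen below) provides a set $A_0 \subseteq V(G)$ of size at most $a_0$ and a $(\TT - A_0)$-central maximal segregation $S$ of $G - A_0$ of type $(p_0, k_0)$, with $p_0 \le \rho$ and $k_0$ bounded by a function of $\ell$, arranged in some surface $\Sigma$ in which $K_\ell$ does not embed. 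In particular, the genus of $\Sigma$ is bounded by a function of $\ell$, so up to homeomorphism $\Sigma$ lies in a finite family. Moreover, $T(S)$ carries a respectful tangle $\TT_0$ of order at least $\phi_*(p_0)$ conformal with $\TT - A_0$, whose vortex faces are pairwise at distance at least $\phi_*(p_0)$ in the associated metric.

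Now I would split according to whether $H$ embeds in $\Sigma$. If $H$ does not embed in $\Sigma$, construct a star decomposition $(T,\beta)$ of $G$ directly: let $\beta(s)$ consist of $A_0$, the boundary vertices of all members of $S$, and the internal vertices of the $p_0$-vortices; for each cell $s' \in S$ (a member with $|\partial s'| \le 3$), create a tip $t_{s'}$ whose bag consists of $V(s')$ together with all vertices of $A_0$ adjacent in $G$ to some vertex of $V(s') \setminus \partial s'$. The $(\TT - A_0)$-centrality of $S$, combined with the standard identification of separations of $G - A_0$ in $\TT - A_0$ with separations of $G$ in $\TT$ (by moving $A_0$ to the large side), ensures that each tip separation lies in $\TT$, making $(T,\beta)$ a $\TT$-respecting decomposition. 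Its torso $\tau(s) - A_0$ consists precisely of $T(S)$ together with the $p_0$-vortices---an outgrowth of a graph embedded in $\Sigma$ by at most $k_0$ vortices of depth at most $p_0$, witnessing the second conclusion with $A := A_0$. Otherwise $H$ embeds in $\Sigma$, and I would apply Lemma~\ref{lemma-sublocal} to $G - A_0$, the tangle $\TT - A_0$, and the segregation $S$ with its arrangement in $\Sigma$ and respectful tangle $\TT_0$; this yields a $(\TT - A_0)$-respecting star decomposition of $G - A_0$ with apex $A_1$ of size at most some $a_1$, satisfying the first or third structural conclusion for $\Sigma$. Extend to a star decomposition of $G$ by adjoining $A_0$ to the center bag (and to any tip containing a neighbor of $A_0$ outside the center); since $A_0 \subseteq \beta(s)$, removing $A := A_0 \cup A_1$ from $\tau(s)$ recovers the torso-minus-apex produced by Lemma~\ref{lemma-sublocal}, preserving its structural conclusion.

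For the constants, only finitely many surfaces (up to homeomorphism) occur in the case split, so I take $\phi_*(p)$ to be the maximum over all relevant $\Sigma$ in which $H$ embeds of the constant $\phi$ produced by Lemma~\ref{lemma-sublocal}, viewed as a non-decreasing function of $p$; similarly, the constants $a, D, k, m, n, p$ required for Lemma~\ref{lemma-local} are the corresponding maxima of the output constants of Lemma~\ref{lemma-sublocal} across this family, adjusted by $a_0$ and the parameters from Case~A. Applying Theorem~\ref{thm-gm} with this $\phi_*$ fixes $\rho, a_0, \theta$, and this $\theta$ becomes the required bound on the tangle order in Lemma~\ref{lemma-local}. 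The main obstacle I anticipate is the tangle bookkeeping in Case~A: one has to verify that the hand-built star decomposition is genuinely $\TT$-respecting and that $\tau(s) - A_0$ coincides edge-for-edge with the outgrowth of $T(S)$ by the $p_0$-vortices (no missing or extraneous edges arise from the clique edges the torso adds along the $\partial s'$). Once these standard but slightly delicate tangle-extension points are handled, the rest is routine assembly of Theorem~\ref{thm-gm} and Lemma~\ref{lemma-sublocal}.
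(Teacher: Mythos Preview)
Your proposal is correct and follows essentially the same route as the paper: apply Theorem~\ref{thm-gm} with $K_\ell$ as the excluded minor and a function $\phi_*$ tuned to Lemma~\ref{lemma-sublocal}, then split on whether $H$ embeds in the resulting surface $\Sigma$; in the non-embeddable case convert the segregation to a star decomposition by the standard Robertson--Seymour construction (the paper simply cites the proof of (1.3) in \cite{robertson2003graph} here, whereas you spell it out), and in the embeddable case invoke Lemma~\ref{lemma-sublocal} on $G-A_0$ and re-attach $A_0$ to the bags. The paper adds $A_0$ to \emph{every} bag rather than only to the center and to tips meeting neighbours of $A_0$, but either choice yields the same torso after deleting $A_0$, so this is an inessential difference.
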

\begin{proof}
Let $g$ be the smallest integer such that $K_\ell$ can be embedded in every surface of genus at least $g$.
Let $\SSS$ be the set of surfaces of genus less than $g$ in that $H$ can be embedded.
Let $k_0$ be the constant $k$ of Theorem~\ref{thm-gm} for $K_{\ell}$.  
Let $a_1$, $k_1$, $p_1$, $D_1$, $m_1$, $n_1$ and $\phi_1$
be non-decreasing positive functions of one variable such that $a_1(p_0)$, $n_1(p_0)$, $p_1(p_0)$, $D_1(p_0)$, $m_1(p_0)$, $n_1(p_0)$, $n_1(p_0)$ and $\phi_1(p_0)$
are greater or equal to the maximum over $\Sigma\in\SSS$ of constants given by Lemma~\ref{lemma-sublocal} applied for $H$, $k_0$, $\Sigma$ and $p_0$.
If $\SSS=\emptyset$, we let all these functions be constant, assigning to every $p_0$ the value $1$.
Let $\rho_0$, $a_0$ and $\theta_0$ be the constants of Theorem~\ref{thm-gm} for $K_{\ell}$ and the function $\phi_1$.
We set $a=a_0+a_1(\rho_0)$, $D=D_1(\rho_0)$, $k=\max(k_0,k_1(\rho_0))$, $m=m_1(\rho_0)$, $n=n_1(\rho_0)$, $p=\max(2\rho_0+1,p_1(\rho_0))$ and $\theta=\theta_0$.

Since $\TT$ does not control a $K_{\ell}$-minor, we can apply Theorem~\ref{thm-gm} for $K_{\ell}$ and the function $\phi_1$.
We obtain a set $A_0$ of size at most $a_0$ and a $(\TT-A_0)$-central maximal segregation $S$ of $G-A_0$ of type $(p',k_0)$ for some
$p'\le\rho_0$ which has an arrangement in a surface $\Sigma$ of genus less than $g$.
Furthermore, $T(S)$ contains a respectful tangle $\TT'$ of order at least $\phi_1(p')$ conformal with $\TT-A_0$,
and if $f_1$ and $f_2$ are vortex faces of $T(S)$ corresponding to distinct $p'$-vortices and $d'$ is the metric
defined by $\TT'$, then $d'(f_1,f_2)\ge\phi_1(p')$.

If $H$ cannot be embedded in $\Sigma$, then the segregation $S$ can be transformed into a star decomposition satisfying the second outcome of Lemma~\ref{lemma-local}
in the same way as in the proof of (1.3) (assuming (3.1)) in \cite{robertson2003graph}.

If $H$ embeds in $\Sigma$, then we apply Lemma~\ref{lemma-sublocal} with $H$, $k_0$, $p'$ and $\Sigma$ for $G-A_0$ and its segregation $S$ and tangles $\TT-A_0$
and $\TT'$ that we obtained in the previous paragraph. Let $(T,\beta')$ be the resulting $(\TT-A_0)$-respecting star decomposition of $G-A_0$ with center $s$,
and let $A_1\subseteq V(G-A_0)$ be the set such that $\tau(s)-A_1$ satisfies one of the outcomes of Lemma~\ref{lemma-sublocal}.
We let $\beta(v)=\beta'(v)\cup A_0$ for every $v\in V(T)$ and $A=A_0\cup A_1$.  Clearly, $(T,\beta)$ is a $\TT$-respecting star decomposition
of $G$ satisfying either the first or the third outcome of Lemma~\ref{lemma-local}.
\end{proof}

Proving our main result is now easy.

\begin{proof}[Proof of Theorem~\ref{thm-main}]
We follow the proof of Theorem 4.1 of Grohe and Marx~\cite{gmarx}, using Lemma~\ref{lemma-local} instead of
Lemma 6.10 (let us remark that if a tangle controls a minor of $K_{\ell}$, then the corresponding image of $K_{\ell}$ is not removed by the tangle);
the only difference is that $m$ must be chosen as the maximum of $\theta$ from our Lemma~\ref{lemma-local}
and $m^{\star}$ from Lemma~4.10 of \cite{gmarx}.  Let us note that in the derivation of the analogue of Lemma~4.9 of \cite{gmarx},
we move the vertices of $X$ to the apex set of the center of the star decomposition.
\end{proof}

\section{Applications}\label{sec-appl}

A number of special cases of Theorem~\ref{thm-main} is of interest.  The graph $K_5$ is the smallest clique for that the structure of the
graphs that avoid it as a topological minor is not satisfactorily described.  Since $\faces(K_5,\Sigma)=1$ for every surface $\Sigma$
other than the sphere, we obtain the following.

\begin{corollary}\label{cor-nok5}
There exist constants $D$, $k$, $p$, and $a$ with the following property.
Every graph $G$ that does not contain $K_5$ as a topological minor can be expressed as a clique-sum of
graphs $G_1$, $G_2$, \ldots, $G_n$ such that for $1\le i\le n$, there exists a set $A_i\subset V(G_i)$ of size at most $a$
and $G_i-A_i$ either has maximum degree at most $D$, or it is an outgrowth of a graph embedded in the sphere
by at most $k$ vortices of depth at most $p$.
\end{corollary}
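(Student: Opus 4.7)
The plan is to invoke Theorem~\ref{thm-main} with $H=K_5$ and to verify that its three cases collapse onto the two conclusions of the corollary, the third being vacuous. By Kuratowski's theorem $K_5$ does not embed in the sphere, while the Euler genus of $K_5$ is $1$, so $K_5$ embeds in every non-spherical surface. Thus the first case of Theorem~\ref{thm-main} yields the ``bounded maximum degree'' conclusion directly, and in its second case the only admissible $\Sigma$ is the sphere, yielding the ``outgrowth of a graph embedded in the sphere'' conclusion.

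The real work is to rule out the third case. Since every vertex of $K_5$ has degree exactly $4$, $\faces(K_5,\Sigma)$ is the minimum, over embeddings of $K_5$ in $\Sigma$, of the number of faces whose union touches all five vertices. I would prove that $\faces(K_5,\Sigma)=1$ for every surface $\Sigma$ in which $K_5$ embeds; this makes the hypothesis $\faces(K_5,\Sigma)\ge 2$ of the third case impossible and completes the argument.

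To verify the claim it suffices to exhibit, for the projective plane, an embedding of $K_5$ with some face incident with all five vertices, since any such embedding extends to an arbitrary non-spherical $\Sigma$ by attaching the extra handles or crosscaps inside a different face. Concretely, I would draw a Hamilton cycle $C=v_1v_2v_3v_4v_5$ as a contractible simple closed curve bounding an empty disk $\Delta$, and route the remaining five ``diagonal'' edges (which themselves form the $5$-cycle $v_1v_3v_5v_2v_4$) through a single crosscap placed on the complementary side of $C$, so that $\Delta$ becomes a pentagonal face with $C$ on its boundary. A quick Euler-formula check $V-E+F = 5-10+6 = 1 = \chi(\mathbb{RP}^2)$ with face-size sequence $(5,3,3,3,3,3)$ confirms consistency. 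The main (minor) obstacle is making the crosscap routing rigorous---say, by writing down an explicit rotation system with signatures and tracing the facial walks---but the existence of such an embedding of $K_5$ is a classical fact in topological graph theory. Once the third case of Theorem~\ref{thm-main} is eliminated, the corollary follows immediately.
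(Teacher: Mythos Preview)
Your proposal is correct and follows exactly the paper's approach: the paper derives the corollary in one line from Theorem~\ref{thm-main} by observing that $\faces(K_5,\Sigma)=1$ for every surface $\Sigma$ other than the sphere, which kills the third case and forces $\Sigma$ to be the sphere in the second. Your explicit projective-plane embedding with a Hamiltonian pentagonal face (and its extension to higher-genus surfaces by placing extra handles or crosscaps in a different face) supplies the justification the paper leaves implicit.
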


It seems plausible that it is actually possible to avoid the vortices and the apex vertices in the embedded case
of Corollary~\ref{cor-nok5}, and that the following holds.

\begin{conjecture}\label{conj-nok5}
There exist constants $D$ and $a$ with the following property.
Every graph $G$ that does not contain $K_5$ as a topological minor can be expressed as a clique-sum of
graphs $G_1$, $G_2$, \ldots, $G_n$ such that for $1\le i\le n$, either $G_i$ is planar or $G_i$ contains at most
$a$ vertices of degree at least $D$.
\end{conjecture}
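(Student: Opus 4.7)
The plan is to strengthen Corollary~\ref{cor-nok5} by further decomposing the almost-embedded pieces. By that corollary, each piece $G_i-A_i$ in the clique-sum decomposition is either of bounded maximum degree or is an outgrowth of a graph embedded in the sphere by at most $k$ vortices of depth at most $p$. So the task reduces to showing that if $G'$ is such an outgrowth in the plane and $G'$ contains no $K_5$ topological minor, then $G'$ can itself be expressed as a clique-sum of planar graphs and graphs with boundedly many high-degree vertices, which can then be spliced into the global clique-sum from Theorem~\ref{thm-main}.

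The first step is to exploit the pathwidth structure of each vortex. By Lemma~\ref{lemma-pvortex}, a $p$-vortex $F$ attached along a face boundary has a standard path decomposition with bags $X_1,\ldots,X_m$ of size at most $p+1$, and consecutive intersections $X_i\cap X_{i+1}$ are cliques of size at most $p$ in $F$. Each such intersection, together with the single boundary vertex $v_i$, forms a small separator of $G'$: on one side lies a ``chunk'' of $F$ together with the portion of the plane graph enclosed by two radial cuts at $v_i$ and $v_{i+1}$, on the other side lies the rest of $G'$. I would cut along these clique-separators and perform clique-sums, splitting each vortex into short chunks attached to compact regions of the planar part.

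The main obstacle is ensuring that each resulting chunk-piece is either planar or has only $O(p)$ high-degree vertices. A single chunk of the vortex has at most $O(p)$ internal vertices, so if the accompanying planar region contributes only vertices of low degree (in the chunk-piece), the high-degree vertices are confined to the vortex chunk and their number is $O(p)$. The trouble is that vertices of the planar part lying near the attachment arc may acquire high degree inside the chunk-piece via edges of the vortex, and worse, the vortex chunk may sit in a face in a non-planar way (through its chords via the pathwidth structure), so the chunk-piece need not embed in the plane. To deal with this one has to argue that after absorbing the vortex chunk into an apex set of size $O(p)$, what remains is planar, which is plausible because the chunk has bounded size but requires a careful topological argument.

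The genuine difficulty---presumably the reason the statement is left open---is ruling out configurations in which many high-degree vertices survive spread across several chunks and across multiple vortices, yet no $K_5$ subdivision appears. Since $K_5$ has exactly five branch vertices and all of degree $4$, one would want to mimic the proof of Lemma~\ref{lemma-main} in the planar + bounded-depth-vortex setting: if sufficiently many pairwise far apart high-degree vertices remain in the vortices and are well-connected to the planar base, then by routing four disjoint paths out of each through the planar part (using a local version of Theorem~\ref{thm-gmembed}) one extracts a $K_5$ subdivision. The delicate point, which I do not see how to resolve directly from the tools in the paper, is handling the ``small'' setting in which the overall tangle order is limited and Theorem~\ref{thm-gmembed} cannot be invoked; a more combinatorial analysis of small planar-plus-vortex patches avoiding $K_5$-subdivisions seems to be required, perhaps along the lines of Wagner-type arguments for $K_5$-minor-free graphs lifted to the topological-minor setting.
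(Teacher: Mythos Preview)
The statement you are attempting to prove is labeled \emph{Conjecture} in the paper, not a theorem or corollary, and the paper does not provide a proof. The only remark following it is that the Seymour--Kelmans conjecture (every $5$-connected graph without a $K_5$ topological minor is planar), if true, would lend it some support. So there is no ``paper's own proof'' to compare against: the statement is open as far as the paper is concerned.

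Your proposal correctly identifies this, noting that the difficulty is ``presumably the reason the statement is left open,'' and your sketch is honest about where it stalls. The first part of your plan---cutting each vortex along the clique separators coming from its standard path decomposition---is reasonable and does give pieces with only $O(p)$ vortex vertices each. But the step where you hope to absorb a vortex chunk into an apex set and be left with something planar does not yield the conjectured conclusion: you would end up with pieces that are planar \emph{after removing a bounded apex set}, which is exactly the ``almost-embedded'' outcome of Corollary~\ref{cor-nok5}, not genuine planarity. Eliminating the apex vertices and the vortices entirely in the embedded case is precisely the content of the conjecture, and nothing in the paper's toolkit (Lemmas~\ref{lemma-main}, \ref{lemma-mainsim}, Theorem~\ref{thm-gmembed}) gives that; those tools are calibrated to produce almost-embeddings, not exact embeddings. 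A proof would likely need input of a different flavor, closer to the Seymour--Kelmans conjecture, as the paper itself hints.
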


Let us remark that Seymour (1975) and independently Kelmans (1979) conjectured that
every $5$-connected graph without $K_5$ topological minor is planar.  This, if true, would give some support to Conjecture~\ref{conj-nok5}.

In the rest of this section, we prove Corollary~\ref{cor-adm} and Theorem~\ref{thm-converse} stated in the introduction.
The following construction of a graph with large $\infty$-admissibility was suggested by Patrice Ossona de Mendez.
Let $W_t$ be the graph obtained from $t^2\times t^2$ plane wall by subdividing each of the edges
on its perimeter once and by adding $t$ vertices of degree $t$ attaching to the resulting vertices of degree two in 
a planar way.  Note that $W_t$ is a planar graph and $\faces(W_t,\Sigma)=1$ for every surface $\Sigma$.

\begin{proof}[Proof of Corollary~\ref{cor-adm}.]
We can assume that $t\ge 2$.  Let $v_1$, $v_2$, \ldots, $v_n$ be an ordering
of $V(G)$ with $\infty$-admissibility at most $t$.  Suppose that $G$ contains $H=W_{t+2}$ as a topological minor,
and let $u_1$, $u_2$, \ldots, $u_{t+2}$ be the vertices of $H$ of degree $t+2$.
Let $v_i$ be the last branch vertex of degree $t+2$ of the topological minor of $H$ in $G$, in the ordering of $V(G)$.
Without loss of generality, we can assume that $v_i$ corresponds to $u_{t+2}$.
Note that $H$ contains paths from $u_{t+2}$ to $u_1$, \ldots, $u_{t+1}$ that intersect only in $u_{t+2}$,
and let $P_1$, $P_2$, \ldots, $P_{t+1}$ be the corresponding paths in $G$.  The paths $P_1$, \ldots, $P_{t+1}$ intersect
only in $v_i$ and their other ends are branch vertices of degree $t+2$, and thus they appear before $v_i$ in the ordering.
This shows that $\infty$-backconnectivity of $v_i$ is at least $t+1$, which is a contradiction.

Therefore, $G$ does not contain $W_{t+2}$ as a topological minor, and the corollary follows from Theorem~\ref{thm-main}.
\end{proof}

Let us now show a weak converse of Corollary~\ref{cor-adm}.  We start with two auxiliary claims.
Let $G$ be a graph and let $X$ be a set of vertices of $G$. We say that $G$ has \emph{$X$-based $\infty$-admissibility at most $t$}
if there exists an ordering $v_1$, $v_2$, \ldots, $v_n$ of $V(G)$ with $\infty$-admissibility at most $s$ such that $X=\{v_1,v_2,\ldots, v_{|X|}\}$.
We say that $G$ has \emph{clique-based $\infty$-admissibility at most $t$} if it has $X$-based $\infty$-admissibility at most $t$ for every
$X\subseteq V(G)$ that induces a clique in $G$.

\begin{lemma}\label{lemma-baseadm}
If a graph $G$ has at most $a$ vertices of degree greater than $D$, then $G$ has clique-based $\infty$-admissibility at most $a+D$.
\end{lemma}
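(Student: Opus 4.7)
The plan is to exhibit, for each clique $X \subseteq V(G)$, an explicit ordering of $V(G)$ starting with $X$ whose $\infty$-admissibility is at most $a+D$. Let $A$ denote the set of vertices of $G$ of degree greater than $D$, so $|A| \le a$. I would list the vertices of $X$ first (in any order), then the vertices of $A \setminus X$ (in any order), and finally the vertices of $V(G) \setminus (X \cup A)$ (in any order). Writing $v_1,v_2,\ldots,v_n$ for this ordering, the first $|X|$ vertices are exactly $X$, so this is indeed $X$-based.

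To bound the $\infty$-backconnectivity of a vertex $v_k$, I would use two elementary upper bounds simultaneously: (i) any collection of paths from $v_k$ to earlier vertices that meet only at $v_k$ has endpoints that are distinct earlier vertices, so there are at most $k-1$ of them; and (ii) each such path uses a distinct edge incident to $v_k$, so there are at most $\deg(v_k)$ of them.

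The structural fact that drives the case analysis is that if $X$ contains any vertex $w$ of degree at most $D$, then cliqueness forces $|X|-1 \le \deg(w) \le D$, hence $|X|\le D+1$. Using this I would split into three cases. For $v_k \in X$, the earlier vertices lie in $X$, so the backconnectivity is at most $|X|-1$, which is at most $a-1$ if $X \subseteq A$ (since then $|X|\le|A|\le a$) and at most $D$ otherwise. For $v_k \in A \setminus X$, the earlier vertices lie in $X \cup A$, so the backconnectivity is at most $|X \cup A|-1$; this is at most $a-1$ when $X \subseteq A$, and at most $(D+1)+a-1=D+a$ when $X \not\subseteq A$. For $v_k \in V(G) \setminus (X \cup A)$, bound (ii) gives backconnectivity at most $\deg(v_k) \le D$. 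In every case the backconnectivity is at most $a+D$, as required.

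I do not anticipate any genuine obstacle. The only point that deserves a sentence of justification in the write-up is the observation $|X| \le D+1$ whenever $X$ contains a vertex outside $A$, since the entire case analysis hinges on it; everything else amounts to comparing a few small integers.
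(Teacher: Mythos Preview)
Your proposal is correct and follows essentially the same approach as the paper: order $X$ first, then the remaining high-degree vertices, then everything else, and bound the backconnectivity by $k-1$ in the prefix and by $\deg(v_k)$ in the suffix. The only cosmetic difference is that the paper defines $A$ as the high-degree vertices \emph{outside} $X$ (so $X$ and $A$ are disjoint) and packages the key inequality as $|X|+|A|\le a+D+1$ in one line rather than splitting into the cases $X\subseteq A$ versus $X\not\subseteq A$; the content is identical.
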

\begin{proof}
Suppose that $X\subseteq V(G)$ induces a clique in $G$, and let $A$ be the set of vertices of $V(G)\setminus X$ whose degree in $G$ is greater than $D$.
If $|X|>a-|A|$, then at least one vertex of $X$ has degree at most $D$, and since $X$ induces a clique, we have $|X|\le D+1$.
Therefore, $|X|\le \max(a-|A|,D+1)$, and thus $|A|+|X|\le \max(a,|A|+D+1)\le a+D+1$.  Let $v_1$, \ldots, $v_n$ be an ordering
of $V(G)$ such that $X=\{v_1,\ldots, v_{|X|}\}$ and $A=\{v_{|X|+1},\ldots, v_{|A|+|X|}\}$.  For $1\le i\le |A|+|X|$, the
$\infty$-backconnectivity of $v_i$ is at most $i-1\le |A|+|X|-1\le a+D$.  If $|A|+|X|<i\le n$, then $v_i$ has degree at most $D$,
and thus the $\infty$-backconnectivity of $v_i$ is at most $D$.  Therefore, $G$ has $X$-based $\infty$-admissibility at most $a+D$,
and since the choice of the clique $X$ was arbitrary, it also has clique-based $\infty$-admissibility at most $a+D$.
\end{proof}

\begin{lemma}\label{lemma-treeadm}
Let $G=G_1\cup G_2$, where $G_1\cap G_2$ is a clique.  If both $G_1$ and $G_2$ have clique-based $\infty$-admissibility at most $t$,
then $G$ has clique-based $\infty$-admissibility at most $t$ as well.
\end{lemma}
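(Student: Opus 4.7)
Plan for Lemma~\ref{lemma-treeadm}. Given a clique $X\subseteq V(G)$, the goal is to exhibit an ordering of $V(G)$ starting with $X$ that witnesses $\infty$-admissibility at most $t$. The first observation will be that a clique of $G$ lies entirely in $V(G_1)$ or entirely in $V(G_2)$: if $x\in V(G_1)\setminus V(G_2)$ and $y\in V(G_2)\setminus V(G_1)$ both belonged to $X$, then the edge $xy$ would have to lie in $E(G_1)\cup E(G_2)$, both of which are impossible. By symmetry I may assume $X\subseteq V(G_1)$, and I write $C=V(G_1)\cap V(G_2)$, which is a clique of $G$ and hence of $G_2$.

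Next I invoke the two clique-based admissibility hypotheses to obtain an $X$-based ordering $\sigma_1$ of $V(G_1)$ and a $C$-based ordering $\sigma_2$ of $V(G_2)$, each of $\infty$-admissibility at most $t$. I then concatenate them: use $\sigma_1$ on $V(G_1)$, followed by the vertices of $V(G_2)\setminus C$ in the order inherited from $\sigma_2$. This sequence starts with $X$, so it remains to bound the $\infty$-backconnectivity in $G$ of each vertex $v_k$, split by whether $v_k\in V(G_1)$ or $v_k\in V(G_2)\setminus V(G_1)$.

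In the first case the earlier set $U$ lies entirely in $V(G_1)$, and by Menger's theorem applied inside $G_1$ (using the admissibility of $\sigma_1$) there is $S\subseteq V(G_1)\setminus\{v_k\}$ of size at most $t$ separating $v_k$ from $U$ in $G_1$. I would then argue that $S$ continues to separate $v_k$ from $U$ in $G$: any hypothetical $v_k$-$U$ path in $G-S$ has maximal excursions into $V(G_2)\setminus V(G_1)$ bordered by distinct vertices of $C\setminus S$, and because $C$ is a clique each excursion collapses to a single edge of $G_1-S$, yielding a $v_k$-$U$ walk in $G_1-S$ that contradicts the choice of $S$. In the second case the earlier set equals $V(G_1)\cup U'$, where $U'$ is an initial segment of $\sigma_2$ lying in $V(G_2)\setminus V(G_1)$. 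Given paths $P_1,\ldots,P_r$ from $v_k$ to $V(G_1)\cup U'$ that intersect only in $v_k$, I truncate each $P_i$ at its first vertex in $V(G_1)$ (necessarily in $C$), or leave it unchanged if it stays in $V(G_2)\setminus V(G_1)$ and thus already ends in $U'$. The truncated paths $P_i'$ lie in $G_2$, end in $C\cup U'$, and still pairwise intersect only in $v_k$, since a shared non-$v_k$ vertex of $P_i'$ and $P_j'$ would already be shared by $P_i$ and $P_j$. Since $C\cup U'$ is exactly the set of vertices preceding $v_k$ in $\sigma_2$, the admissibility hypothesis on $G_2$ gives $r\le t$.

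The main obstacle is the first case: a priori a $v_k$-$U$ path in $G$ could use long detours through $V(G_2)\setminus V(G_1)$ that are unavailable in $G_1$, so one might fear the $G$-backconnectivity exceeds the $G_1$-backconnectivity. The clique property of $C$ is precisely what makes it possible to collapse every such detour into a single edge, so that the Menger separator found inside $G_1$ still works in $G$. Once both cases are dispatched, the constructed ordering witnesses $X$-based $\infty$-admissibility at most $t$ in $G$, and since $X$ was arbitrary, $G$ has clique-based $\infty$-admissibility at most $t$.
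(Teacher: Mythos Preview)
Your argument is correct and follows the same plan as the paper: concatenate an $X$-based ordering of $G_1$ with the tail of a $C$-based ordering of $G_2$, and then bound the $G$-backconnectivity of each $v_k$ by its backconnectivity in the appropriate $G_j$ using the fact that any excursion of a path through $V(G_2)\setminus V(G_1)$ is bordered by two vertices of the clique $C$ and can be replaced by a single edge. The only cosmetic difference is that for $v_k\in V(G_1)$ you phrase this via the Menger dual (a small separator in $G_1$ that remains a separator in $G$), whereas the paper transforms the witnessing paths directly; the underlying observation is identical.
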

\begin{proof}
Suppose that $X\subseteq V(G)$ induces a clique.  By symmetry, we can assume that $X\subseteq V(G_1)$.
Let $n_1=|V(G_1)|$ and $n_2=|V(G_2)|$.
Let $v_1$, \ldots, $v_n$ be an ordering of $V(G)$ defined as follows: $v_1$, \ldots, $v_{n_1}$ is an ordering of $V(G_1)$
with $\infty$-admissibility at most $t$ such that $X=\{v_1,\ldots, v_{|X|}\}$.  Let $v_{n_1+1}$, \ldots, $v_n$ be an ordering of $V(G_2)\setminus V(G_1)$
chosen so that there exists an ordering $u_1$, \ldots, $u_{|V(G_1)\cap V(G_2)|}$, $v_{n_1+1}$, \ldots, $v_n$ of $V(G_2)$ with $\infty$-admissibility at most $t$.

Consider a path $P$ in $G$ with ends $v_i$ and $v_j$ such that $j<i$.  If $P$ does not intersect $V(G_1)\cap V(G_2)$, then let $P'=P$.  Otherwise,
let $x$ be the vertex of $P$ nearest to $v_i$ that belongs to $V(G_1)\cap V(G_2)$, and let $y$ be the vertex of $P$ nearest to $v_j$ that belongs to $V(G_1)\cap V(G_2)$.
Let us remark that we can have $x=v_i$ or $y=v_j$.
If $i\le n_1$, then let $P'$ be the path consisting of the subpath of $P$ between $v_i$ and $x$, the edge $xy$, and the subpath of $P$ between $y$ and $v_j$.
If $i>n_1$, then let $P'$ be the subpath of $P$ between $v_i$ and $x$.  Observe that $V(P')\subseteq V(P)$, the end of $P'$ distinct from $v_i$ appears before $v_i$
in the ordering, and either $P'\subseteq G_1$ or $P'\subseteq G_2$.  Consequently, the $\infty$-backconnectivity of $v_i$ in the ordering of vertices of $G$ is at most
the $\infty$-backconnectivity of $v_i$ in the ordering of vertices of $G_j$, where $j=1$ if $i\le n_1$ and $j=2$ if $i>n_1$.  In both cases, the $\infty$-backconnectivity of $v_i$
is at most $t$.

It follows that the ordering of vertices of $G$ has $\infty$-backconnectivity at most $t$, and since the choice of the clique $X$ was arbitrary,
$G$ has clique-based $\infty$-admissibility at most $t$.
\end{proof}

\begin{proof}[Proof of Theorem~\ref{thm-converse}.]
We prove that the claim holds with $T=a+D$.  Let $G$ be a clique-sum of graphs $G_1$, \ldots, $G_n$ with at most $a$ vertices
of degree greater than $D$.  We can choose the ordering of the graphs so that $G_1\cup \ldots \cup G_i$ intersects $G_{i+1}$ in
a clique, for $1\le i\le n-1$.  By Lemmas~\ref{lemma-baseadm} and \ref{lemma-treeadm}, we conclude that $G'=G_1\cup \ldots\cup G_n$
has clique-based $\infty$-admissibility at most $T$.  Since $G$ is a subgraph of $G'$, it follows that $G$ has $\infty$-admissibility at most $T$.
\end{proof}

\section{Acknowledgments}
I would like to thank Patrice Ossona de Mendez and Paul Wollan for discussions related to
the $\infty$-admissibility and immersion problems.

\bibliographystyle{siam}
\bibliography{topmin}

\end{document}